\theoremstyle{definition}
\newtheorem{definition}{Definition}[section]
\newtheorem{remark}[definition]{Remark}
\theoremstyle{plain}
\newtheorem{theorem}[definition]{Theorem}
\newtheorem{thm}[definition]{Theorem}
\newtheorem{proposition}[definition]{Proposition}
\newtheorem{prop}[definition]{Proposition}
\newtheorem{lemma}[definition]{Lemma}
\newtheorem{corollary}[definition]{Corollary}
\newtheorem*{claim}{Claim}
\newcommand{\from}{\colon}
\newcommand{\define}[1]{\emph{#1}}
\newcommand{\N}{\mathbb{N}}
\newcommand{\Z}{\mathbb{Z}}
\newcommand{\T}{\mathbb{T}}
\newcommand{\E}{\mathrel{E}}
\newcommand{\G}{\mathrel{G}}
\newcommand{\EG}{{\mathrel{E_G}}} 
\newcommand{\evenG}{\mathrel{E_{2,G}}}
\newcommand{\id}{\mathrm{id}}
\renewcommand{\subset}{\subseteq}
\renewcommand{\supset}{\supseteq}
\newcommand{\inters}{\cap}
\newcommand{\biginters}{\bigcap}
\newcommand{\union}{\cup}
\newcommand{\bigunion}{\bigcup}
\newcommand{\restrict}{\restriction}
\newcommand{\bfSigma}{\mathbf{\Sigma}}
\newcommand{\bigdisjointunion}{\bigsqcup}
\newcommand{\mathand}{\mbox{ and }}
\newcommand{\mathor}{\mbox{ or }}
\DeclareMathOperator{\Col}{Col}
\DeclareMathOperator{\Cay}{Cay}
\DeclareMathOperator{\graph}{graph}
\DeclareMathOperator{\wmsf}{WMSF}
\DeclareMathOperator{\freeactions}{FR}
\DeclareMathOperator{\dom}{dom}
\renewcommand{\deg}{\mathrm{deg}}
\DeclareMathOperator{\Aut}{Aut}
\begin{document}

\title{Brooks's theorem for measurable colorings}

\author{Clinton T.~Conley}
\address[Clinton T.~Conley]{Carnegie Mellon University}
\email{clintonc@andrew.cmu.edu}

\author{Andrew S.~Marks}
\address[Andrew S.~Marks]{University of California, Los Angeles}
\email{marks@math.ucla.edu}

\author{Robin D.~Tucker-Drob}
\address[Robin D.~Tucker-Drob]{Texas A\&M University}
\email{rtuckerd@math.tamu.edu}

\thanks{The first author is supported by NSF grant DMS-1500906 and the
second author is supported by NSF grant DMS-1500974 and the John Templeton
Foundation under Award No. 15619.}

\date{\today}

\begin{abstract}
We generalize Brooks's theorem to show that if
$G$ is a Borel graph on a standard Borel space $X$ of degree
bounded by $d \geq 3$ which contains no $(d+1)$-cliques, 
then $G$ admits a $\mu$-measurable $d$-coloring with respect to any Borel
probability measure $\mu$ on $X$, and a Baire measurable $d$-coloring with
respect to any compatible Polish topology on $X$. The proof of this theorem 
uses a new technique for constructing one-ended spanning subforests of
Borel graphs, as well as ideas from the study of list colorings.
We apply the
theorem to graphs arising from group actions to obtain factor of IID $d$-colorings of
Cayley graphs of degree $d$, except in two exceptional cases.
\end{abstract}

\maketitle

\section{Introduction}

We
begin by recalling a classical theorem of Brooks from finite combinatorics.

\begin{thm}[Brooks's Theorem \cite{D}*{Theorem 5.2.4}]\label{thm:ClassicBrooks}
Suppose $G$ is a finite graph with vertex degree
bounded by $d$. Suppose further that $G$ contains no complete graph
on $d + 1$ vertices, and if $d = 2$ that $G$ contains no odd
cycles. Then $G$ has a (proper) vertex $d$-coloring.
\end{thm}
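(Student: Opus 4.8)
The plan is to build the coloring greedily: if the vertices of $G$ are enumerated so that each one has at most $d-1$ neighbors preceding it, then assigning to each vertex in turn the least color in $\{1,\dots,d\}$ avoided by its already-colored neighbors yields a proper $d$-coloring. Since colorings of the connected components can be combined independently, I first reduce to $G$ connected, and then observe that it suffices to treat $d$-regular $G$. Indeed, if some vertex of a connected $G$ has degree strictly less than $d$, I fix a spanning tree rooted at such a vertex $r$ and enumerate the vertices in order of non-increasing distance from $r$ (so $r$ is last). Then every vertex other than $r$ precedes its tree-parent and so has at most $d-1$ earlier neighbors, while $r$ has fewer than $d$ neighbors in total; the greedy algorithm succeeds.

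Thus assume $G$ is connected and $d$-regular. When $d=2$ the graph $G$ is a single cycle, necessarily even by hypothesis, hence $2$-colorable. The substantive case is $d\geq 3$, where I reduce everything to the following combinatorial core: it suffices to produce a vertex $v$ with two neighbors $a,b$ that are non-adjacent and for which $G-\{a,b\}$ is connected. Granting such a triple, I color both $a$ and $b$ with color $1$, enumerate the remaining vertices by non-increasing distance from $v$ inside the connected graph $G-\{a,b\}$ (so $v$ comes last), and run the greedy algorithm. Each vertex other than $v$ has a neighbor one step closer to $v$, which is enumerated later and differs from $a,b$, so at most $d-1$ of its neighbors are colored when it is processed; and when $v$ is reached, its neighbors $a$ and $b$ share color $1$, so at most $d-1$ colors occur among its $d$ neighbors. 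In either case a free color remains.

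The hard part is producing the triple $(v,a,b)$, and this is exactly where the no-$(d+1)$-clique hypothesis enters: for connected $d$-regular $G$ it is equivalent to $G\neq K_{d+1}$. I would divide according to the vertex-connectivity of $G$. If $G$ is $3$-connected, then since $G\neq K_{d+1}$ it has a pair of non-adjacent vertices at distance exactly $2$; taking these as $a,b$ and a common neighbor as $v$, the graph $G-\{a,b\}$ stays connected because deleting two vertices cannot disconnect a $3$-connected graph. If $G$ has a cut vertex $v$, I argue by induction on the pieces: each subgraph induced by $v$ together with a single component of $G-v$ is connected, is not $d$-regular (as $v$ has degree less than $d$ there), and is not $K_{d+1}$, hence is $d$-colorable by the non-regular case above; permuting colors in each piece so that they agree on the common vertex $v$ and gluing yields a $d$-coloring of $G$. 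The remaining and most delicate situation is $G$ exactly $2$-connected: here I fix a $2$-cut $\{v,w\}$, use $2$-connectivity to force every component of $G-\{v,w\}$ to contain a neighbor of $v$, and then choose $a,b$ as neighbors of $v$ in distinct components, so that $a\not\sim b$. The crux — the step that demands the most care — is a careful choice of $a,b$ guaranteeing that $G-\{a,b\}$ remains connected, since a naive choice can disconnect a component after deletion. Assembling the three connectivity cases completes the proof.
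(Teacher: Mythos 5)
The paper itself gives no proof of this statement (it is quoted from Diestel's book), so I can only judge your proposal on its own terms. Its architecture is the standard Lov\'asz-style proof: greedy coloring along a spanning tree in the non-regular case, gluing along blocks at a cut vertex, and in the $d$-regular case the trick of pre-coloring two nonadjacent neighbors $a,b$ of a vertex $v$ with the same color and greedily coloring $G - \{a,b\}$ toward $v$. Your non-regular, cut-vertex, and $3$-connected cases are all correct. But the proof is incomplete at exactly the point you flag: when $G$ is $d$-regular and $2$-connected but not $3$-connected, you never produce the pair $(a,b)$; you only record that it must be chosen ``carefully.'' This is not a deferrable verification --- it is the crux of Brooks's theorem. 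Moreover the difficulty is real in your framework: fix a $2$-cut $\{v,w\}$ with $v \not\sim w$ and exactly two components $C_1, C_2$ of $G \restrict (X^{}\setminus\{v,w\})$ (writing $X$ for the vertex set); if $b$ is the unique neighbor of $v$ in $C_2$ and the chosen $a$ happens to separate $v$ from $w$ inside the induced subgraph $G \restrict (C_1 \union \{v,w\})$, then every $v$--$w$ path of $G$ meets $a$ or $b$, so $G - \{a,b\}$ is disconnected; $3$-regular, $K_4$-free, $2$-connected examples of this configuration exist. Note also that your stated failure mode is misdiagnosed: since $G$ is $2$-connected, deleting $a$ and $b$ can never cut any vertex off from $\{v,w\}$ (that would make $a$ or $b$ a cut vertex of $G$); the only thing that can fail is the connection between $v$ and $w$, and that is what the missing argument must control.

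There are two standard ways to close the gap. (i) Diestel/Lov\'asz: since $\kappa(G)=2$, choose $v$ so that $G - v$ has a cut vertex (if $\{x,y\}$ is a $2$-cut, then $y$ is a cut vertex of $G-x$); take two end-blocks $B_1, B_2$ of $G - v$; $2$-connectedness of $G$ forces $v$ to have a neighbor $a \in B_1$ and $b \in B_2$ neither of which is a cut vertex of $G-v$; then $a \not\sim b$, and $G - \{a,b\}$ is connected because deleting a non-cut vertex from an end-block leaves $G - v$ connected, while $v$ retains a third neighbor since $d \geq 3$. (ii) Within your $2$-cut framework: first prove that at most one neighbor of $v$ inside a given component $C_i$ can separate $v$ from $w$ in $G \restrict (C_i \union \{v,w\})$ (if $a'$ and $a$ both did, follow a $v$--$w$ path, say it meets $a'$ before $a$, and replace its initial segment by the edge $va$ to get a $v$--$w$ path avoiding $a'$, a contradiction); then observe that if $v \sim w$ or there are at least three components any choice works, and otherwise $d \geq 3$ forces some component to contain two neighbors of $v$, one of which is therefore a non-separator and may be taken as $a$. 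Either of these is a genuine piece of mathematics, and it --- not the bookkeeping around it --- is what your write-up lacks.
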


It is easy to see that if a graph $G$ has vertex degree bounded by
$d$, then $G$ has a $(d + 1)$ coloring: greedily color the vertices
one by one, using the least color not already assigned to a neighboring
vertex. One way of regarding Brooks's theorem is that it is a complete
characterization of the graphs for which this obvious upper bound cannot be
improved: odd cycles and complete graphs.

Brooks's theorem is a fundamental result of graph coloring which has been
generalized in a variety of different
settings. See \cite{CR} for a recent survey. This paper examines measurable
generalizations of Brooks's Theorem. While a straightforward compactness
argument extends Brooks's Theorem to infinite graphs, such an argument
cannot in general produce a coloring with desirable measurability properties such as
being $\mu$-measurable with respect to some probability measure, or
being Baire measurable with respect to some \define{Polish} (separable, completely
metrizable) topology. Recall
that a \define{standard Borel space} is a set $X$ equipped with a
$\sigma$-algebra generated by a Polish topology. The graphs we will consider are \define{Borel graphs}
where the vertices of the graph are the
elements of some standard Borel space $X$, and whose edge relation is Borel
as a subset of $X \times X$.

In studying Borel graphs, Kechris, Solecki, and
Todorcevic~\cite{KST}*{Proposition 4.6} have shown that every Borel graph
of vertex degree bounded by a finite $d$ admits a Borel $(d + 1)$-coloring,
in analogy to the fact above. Hence, we can ask again for which
exceptional cases this obvious bound can not be improved.
Marks~\cite{Ma}*{Theorem 1.3} has shown that for every finite $d$, there
is an acyclic Borel graph of degree $d$ with no Borel $d$-coloring. Hence,
to obtain a reasonable measurable analog of Brooks's theorem as in
Theorem~\ref{thm:mb}, we must consider measurability constraints weaker
than Borel measurability. In this paper we focus on Baire measurability with respect to some compatible Polish topology and
$\mu$-measurability with respect to some Borel probability measure $\mu$.

Still, the obvious analogue of the
$d=2$ case of Brooks's Theorem does not hold for either of these
measurability notions. Let $S : \T \rightarrow \T$ be an irrational
rotation of the unit circle $\T$, and let $G_S$ be the graph on $\T$
rendering adjacent each point $x \in \T$ and its image $S(x)$ under $S$
so $G_S$ is acyclic and each vertex has degree $2$. Now an easy ergodicity
argument shows that $G_S$ has no Lebesgue measurable $2$-coloring: since
$S$ is measure preserving, the two color sets would have to have equal measure,
but since $S^2$ is ergodic, the color sets would each have to be null or
conull. Similarly, $G_S$ has no Baire measurable $2$-coloring (see
Section~\ref{sec:d=2}).  This example may be considered an infinite analog of the odd cycle exemption in \ref{thm:ClassicBrooks}.

Our main result is the following
measurable analogue of Brooks's theorem for the case $d \geq 3$.
\begin{thm}\label{thm:mb}
Suppose that $G$ is a Borel graph on a standard Borel space $X$ with vertex
degree bounded by a finite $d \geq 3$. Suppose further that $G$ contains
no complete graph on $d+1$ vertices.
\begin{enumerate}
\item Let $\mu$ be any Borel probability measure on $X$. Then $G$ admits a $\mu$-measurable $d$-coloring.
\item Let $\tau$ be any Polish topology compatible with the Borel structure on $X$. Then $G$ admits a Baire measurable $d$-coloring.
\end{enumerate}
\end{thm}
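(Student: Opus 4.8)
The plan is to isolate a ``good part'' of $X$ on which a clean Borel construction succeeds, and to discharge the rest by soft measure/category arguments. First I would split $X$ into the $G$-invariant Borel sets $X_{\mathrm{fin}}$ and $X_{\mathrm{inf}}$ of points lying in finite, respectively infinite, connected components. On $X_{\mathrm{fin}}$ every component is a finite graph of degree $\le d$ with no $(d+1)$-clique, so (since $d\ge 3$, the odd-cycle caveat being vacuous) it is $d$-colorable by the classical Brooks theorem~\ref{thm:ClassicBrooks}; fixing a Borel linear order on $X$ and assigning to each finite component its lexicographically least proper $d$-coloring yields a genuinely Borel $d$-coloring of $X_{\mathrm{fin}}$. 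Thus the whole problem reduces to $X_{\mathrm{inf}}$. I record two facts I will use: on $X_{\mathrm{inf}}$ the no-clique hypothesis is automatic (a $(d+1)$-clique in a degree-$\le d$ graph uses up every vertex's degree budget and so is a whole finite component), and since $X_{\mathrm{fin}}, X_{\mathrm{inf}}$ are $G$-invariant, colorings of the two parts never conflict, so I may treat them separately and freely recolor any invariant null (resp.\ meager) set without destroying properness.

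The heart of the matter is the following measurable structure theorem, which I would try to establish as the main lemma: if all components of $G$ are infinite and $G$ has degree $\le d$ with $d\ge 3$, then on a $G$-invariant $\mu$-conull (resp.\ comeager) Borel set $G$ admits a Borel \emph{one-ended spanning subforest} $T$, i.e.\ a Borel acyclic $T\subseteq G$ meeting every vertex, each of whose tree components is infinite with exactly one end. To build $T$ I would assign the edges a Borel system of generic weights---either honest factor-of-IID labels or a free Borel labeling simulating distinct weights---and take the associated minimal-spanning-forest-type subforest, then prove its components are one-ended. Proving one-endedness measurably and uniformly is exactly where I expect the real difficulty to lie; it is the analogue of the percolation-theoretic fact that wired spanning forests have one-ended components, and it is also precisely where $d\ge 3$ must enter. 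Indeed the $d=2$ rotation example from the introduction shows the construction cannot survive to $d=2$: a one-ended spanning subforest of a bi-infinite line amounts to deleting one edge per line, i.e.\ a Borel choice of a single edge on each orbit, which an ergodic aperiodic rotation forbids. I regard producing $T$ as the main obstacle and the genuinely new ingredient.

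Granting $T$, the coloring step is clean and purely combinatorial. One-endedness gives each vertex $v$ a unique $T$-neighbor $s(v)$ lying in the single infinite component of $T\setminus\{v\}$ (the step ``toward the end''), and the Borel ``descendant'' set $T_v=\{u: v\in\{u,s(u),s^2(u),\dots\}\}$ is \emph{finite}, since deleting $v$ leaves only one infinite piece. Put $g(v)=|T_v|\in\N$; because $T_v\subsetneq T_{s(v)}$ (the parent $s(v)$ lies in $T_{s(v)}\setminus T_v$) we get $g(v)<g(s(v))$, so $g$ strictly increases along every ray to the end. Fix a Borel proper coloring $h\from X\to\N$ from \cite{KST}*{Proposition 4.6}, and rank each vertex by the pair $(g(v),h(v))$ ordered lexicographically; this is a Borel map into a fixed countable ordinal in which, crucially, $s(v)$ strictly outranks $v$ and the two endpoints of any $G$-edge receive distinct ranks (equal $g$ forces unequal $h$). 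Now define $c$ by recursion on rank: let $c(v)$ be the least element of $\{1,\dots,d\}$ avoiding $c(u)$ for every $G$-neighbor $u$ of strictly smaller rank. Since $s(v)$ is a $G$-neighbor of larger rank, $v$ has at most $\deg(v)-1\le d-1$ lower-ranked neighbors, so a color is always available; and since every edge has a unique lower-ranked endpoint, the higher endpoint avoids the lower one's color, making $c$ a proper $d$-coloring. The recursion is well-founded and $c$ is Borel because the rank is Borel and each $c(v)$ is determined by the finite, well-founded tree of strictly-lower-ranked neighbors beneath $v$.

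Finally I would assemble the pieces. The coloring $c$ just produced is Borel on the invariant conull (resp.\ comeager) set where $T$ exists; combined with the Borel coloring of $X_{\mathrm{fin}}$ this gives a Borel proper $d$-coloring off an invariant null (resp.\ meager) set $N$. Each component inside $N$ is itself an infinite degree-$\le d$ graph with no $(d+1)$-clique, hence $d$-colorable by a compactness (infinite Brooks) argument, so some proper $d$-coloring of $N$ exists; splicing it in keeps $c$ a genuine proper $d$-coloring everywhere (no edges leave the invariant set $N$), and since $N$ is null (resp.\ meager) and $c$ was Borel off $N$, the result is $\mu$-measurable (resp.\ Baire measurable). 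If the forest lemma were only to yield positive measure rather than conull, a routine transfinite exhaustion over invariant pieces would upgrade it. The only places the hypotheses $d\ge 3$ and ``no $(d+1)$-clique'' are truly consumed are, respectively, the construction of $T$ and the base coloring of the finite and leftover components; everything else goes through for all $d$.
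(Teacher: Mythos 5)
Your reduction to infinite components, your splice over an invariant null/meager set, and your coloring recursion from a one-ended spanning subforest are all sound (the last is essentially the paper's Lemma~\ref{lem:partial_dlc} and Corollary~\ref{cor:oneend}: a one-ended function lets each vertex reserve a color for its successor). But the main lemma you propose to hang everything on is false, and it fails on a legitimate instance of the theorem. You claim: if every component of $G$ is infinite and $G$ has degree $\leq d$ with $d \geq 3$, then $G$ admits a Borel one-ended spanning subforest on an invariant conull (comeager) set. Take $G = G_S$, the irrational rotation graph from the introduction, viewed as an instance with $d = 3$: its degree is bounded by $2 \leq 3$, every component is an infinite line, and there are no $4$-cliques, so Theorem~\ref{thm:mb} applies to it (and indeed it is measurably $3$-colorable). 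Yet a one-ended spanning subforest of a bi-infinite line must delete \emph{exactly one} edge per line (deleting two or more creates a finite component with $0$ ends, or an isolated vertex), so a Borel such subforest on a non-null invariant set yields a Borel selection of a nonempty finite set from each component, hence smoothness of $E_{G_S}$ there --- contradicting ergodicity of $S^2$. So there is no conull, indeed no non-null invariant, set on which your $T$ exists, and your construction never starts. You cite this very example as showing the method ``cannot survive to $d=2$,'' but the degree hypothesis is an upper bound, not regularity, so the same example defeats the lemma for every $d$. Nor does first reducing to $d$-regular graphs (which you do not do) repair it: the $3$-regular ladder $G_S \mathbin{\square} K_2$ over a rotation has $2$-ended nonsmooth components and likewise admits no measurable one-ended spanning subforest. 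The correct hypothesis for existence of such subforests --- which is exactly what the paper's Theorem~\ref{thm:general_one_end} imposes, in the acyclic case --- is that no component has $0$ or $2$ ends; $2$-ended components are a genuine obstruction, not a technicality.

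This is why the paper's proof has the shape it does, and why some version of that extra structure seems unavoidable. The paper first colors components containing a vertex of degree $< d$ by building a one-ended function pointing \emph{toward} an independent set of such vertices (Proposition~\ref{prop:pointaway} plus Lemma~\ref{lem:partial_dlc}), coloring those vertices last; this reduces to $d$-regular $G$. For $d$-regular components the paper splits according to whether the component is a Gallai tree: non-Gallai-tree components (this is where ladders and all other $2$-ended $d$-regular graphs land) are Borel degree-list-colorable outright by Theorem~\ref{no_gallai}, whose engine is the Borodin--Erd\H{o}s--Rubin--Taylor theorem for finite graphs together with one-ended functions pointing toward finite non-Gallai subgraphs --- no almost-everywhere spanning subforest is needed there; and Gallai-tree components, which for $d$-regular $G$ with $d \geq 3$ are never $2$-ended, have acyclic block intersection graphs with no $0$- or $2$-ended components, to which Theorem~\ref{thm:general_one_end} is applied and the resulting one-ended function lifted back to $G$. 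In short: your forest-plus-greedy-coloring mechanism is the right second half, but the first half needs the Gallai-tree/list-coloring dichotomy (or some substitute) precisely to dispose of the $2$-ended and low-degree configurations on which one-ended spanning subforests provably do not exist.
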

This improves a prior result of Conley and Kechris who proved an
analogous theorem for approximate colorings where one is allowed to discard
a set of arbitrarily small measure~\cite{CK}*{Theorems 2.19, 2.20}.

Our proof of Theorem~\ref{thm:mb} uses ideas from the study of list
colorings in graph theory. Recall that if $G$ is a graph on $X$ and $L$ is
a function mapping each $x \in X$ to a set $L(x)$, then a \define{coloring
of $G$ from the lists $L$} is a coloring $c$ of $X$ such that for every
$x$, $c(x) \in L(x)$. Given a function $f \from X \to \N$, we say $X$ is
$f$-list-colorable if for every function $L$ on $X$ with $|L(x)| = f(x)$,
there is a coloring of $G$ from the lists $L$. We say $G$ is
degree-list-colorable if it is $f$-list-colorable for the function $f(x) =
\deg_G(x)$, where $\deg_G(x)$ is the degree of $x$. Borodin~\cite{B} and
Erd\H{o}s-Rubin-Taylor~\cite{ERT} independently generalized Brooks's
theorem to list colorings by classifying the finite graphs $G$ which are
not degree-list-colorable. Of course, if a graph $G$ has degree bounded by
$d$, then being degree-list-colorable implies that $G$ is $d$-colorable
since we can color from the lists $L(x) = \{1,\ldots, \deg_G(x)\}$ for every
$x$. For degree-list-coloring, the exceptional graphs are the finite
Gallai trees:
\begin{thm}[Borodin~\cite{B}, Erd\H{o}s-Rubin-Taylor~\cite{ERT}] 
A finite connected graph is degree-list-colorable iff it is not a Gallai tree.
\end{thm}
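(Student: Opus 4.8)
The plan is to prove both implications by induction on $|V(G)|$, relying throughout on two elementary facts. First, \emph{monotonicity}: if $G$ is degree-list-colorable then $G$ is $L$-colorable from any lists with $|L(x)| \geq \deg_G(x)$, since we may shrink each list to size exactly $\deg_G(x)$. Second, the \emph{greedy fact}: if $G$ is connected and $|L(x)| \geq \deg_G(x)$ for all $x$ with strict inequality at some vertex $r$, then $G$ is $L$-colorable --- order the vertices along a spanning tree rooted at $r$ with $r$ last, so that every non-root vertex has an as-yet-uncolored neighbor (its parent) when we reach it, and color greedily. Recall that a \emph{Gallai tree} is a connected graph each of whose blocks (maximal $2$-connected subgraphs, bridges, and isolated vertices) is a complete graph or an odd cycle.

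For the direction that a Gallai tree $G$ is \emph{not} degree-list-colorable, I would construct bad lists by induction on the number of blocks, proving the following strengthening: for any designated vertex $v$ there are lists $L$ with $|L(v)| = \deg_G(v) + 1$ and $|L(w)| = \deg_G(w)$ otherwise, together with a color $a \in L(v)$, such that $G$ is $L$-colorable \emph{only} via $v \mapsto a$ (and this coloring does exist); deleting $a$ from $L(v)$ then yields degree-lists with no coloring. The base cases are a clique $K_n$ (give every vertex other than $v$ the list $\{1,\dots,n-1\}$ and set $L(v) = \{1,\dots,n-1,a\}$) and an odd cycle (give every vertex other than $v$ the list $\{1,2\}$ and set $L(v) = \{1,2,a\}$); in each case a direct check shows the only available color at $v$ is the fresh color $a$. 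For the inductive step I split $G$ at a cut vertex into smaller Gallai trees, apply the hypothesis to each piece, and glue: after renaming colors so the pieces overlap only as forced, the ``only via $v \mapsto a$'' property propagates across the cut because any color forbidden on one side must be the fresh color, which I arrange to be shared between the pieces.

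For the converse I prove the contrapositive by induction on $|V(G)|$: if a connected $G$ carries degree-lists $L$ with no $L$-coloring, then $G$ is a Gallai tree. If $G$ has a cut vertex $v$, write $G = G_1 \cup G_2$ with $V(G_1) \cap V(G_2) = \{v\}$. Greedily coloring $G_i$ along a spanning tree rooted at $v$ and leaving $v$ for last shows $G_i - v$ is colorable and leaves at least $|L(v)| - \deg_{G_i}(v) = \deg_{G_{3-i}}(v)$ colors free at $v$, so the set $S_i \subseteq L(v)$ of colors extendable over $G_i$ has $|S_i| \geq \deg_{G_{3-i}}(v)$. Non-colorability of $G$ forces $S_1 \cap S_2 = \emptyset$, whence $|S_1| + |S_2| \leq |L(v)| = \deg_{G_1}(v) + \deg_{G_2}(v)$, and the inequalities must be equalities with $L(v) = S_1 \sqcup S_2$. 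Replacing $L(v)$ by $S_{3-i}$ gives degree-lists on $G_i$ with no coloring, so by induction each $G_i$ is a Gallai tree; since gluing at a vertex creates no new blocks, $G$ is a Gallai tree.

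This reduces the converse to the $2$-connected case: a $2$-connected graph carrying degree-lists with no coloring must be complete or an odd cycle; equivalently, every $2$-connected graph that is neither complete nor an odd cycle is degree-list-colorable. Even cycles are handled directly (they are $2$-choosable). Otherwise such a $G$ has a vertex of degree at least $3$, and the key structural input is an induced path $u - z - w$ (so $u \not\sim w$) for which $G - \{u,w\}$ remains connected. Given this, if some color lies in $L(u) \cap L(w)$ I color both $u$ and $w$ with it, so that their common neighbor $z$ loses only one color and thereby gains slack, and the greedy fact finishes on the connected graph $G - \{u,w\}$ rooted at $z$. I expect the main obstacle to be precisely the complementary case $L(u) \cap L(w) = \emptyset$, where no color can be saved at $z$ and one must instead exploit $2$-connectivity and the failure of completeness and odd-cyclicity to choose the distance-two pair $\{u,w\}$, and the colors assigned to it, so that slack appears somewhere in $G - \{u,w\}$; this careful selection of the pair, together with the separate treatment of even cycles, is the technical heart of the argument.
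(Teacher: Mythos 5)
First, a point of reference: the paper does not prove this statement at all --- it quotes it as a classical theorem of Borodin and Erd\H{o}s--Rubin--Taylor and then builds on it, so your attempt can only be measured against the standard proofs in the literature. Against that standard, two of your three pieces are essentially right. The construction of bad lists on a Gallai tree works as you describe: the base cases are correct, and the gluing at a cut vertex $u$ can be made precise (for instance, when the designated vertex lies in the piece $G_1$, take the palettes of the two pieces disjoint and give $u$ the list $L_1(u) \cup (L_2(u) \setminus \{a_2\})$, where $a_2$ is the forced color of $u$ in $G_2$; then any coloring of $G$ must color $u$ from $L_1(u)$, since otherwise its restriction to $G_2$ would be a coloring from degree lists that the inductive hypothesis forbids, and the forcing propagates). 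Likewise your cut-vertex reduction of the converse, via the sets $S_1, S_2$ of extendable colors and the forced decomposition $L(v) = S_1 \sqcup S_2$, is the standard argument and is correct.

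The genuine gap is the one you flag yourself: the $2$-connected case when $L(u) \cap L(w) = \emptyset$. This is not a technical loose end to be handled by choosing the pair $\{u,w\}$ more cleverly; the obstruction is global (the shape of the lists), not local, and the classical argument dissolves this case rather than attacking it. Split on whether all lists are equal. If some edge $uz$ has $L(u) \neq L(z)$, then (after possibly swapping $u$ and $z$) pick $\alpha \in L(u) \setminus L(z)$, color $u$ with $\alpha$, and run your greedy fact on the connected graph $G - u$ rooted at $z$ (connectedness of $G-u$ is exactly $2$-connectedness of $G$): every non-root vertex has an uncolored parent when reached, and the root $z$ survives because $u$'s color forbids nothing in $L(z)$. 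Note that this case uses no pair at all, and not even the hypothesis that $G$ is noncomplete and not an odd cycle. If instead $L(u) = L(z)$ for every edge, then by connectivity all lists equal a single set $C$, so $G$ is $|C|$-regular and you are literally in the setting of Brooks's theorem; there your pair argument goes through verbatim, since now $L(u) \cap L(w) = C \neq \emptyset$ for any pair, so the disjoint case never arises. (You would still owe proofs of the structural lemma producing the induced path $u$--$z$--$w$ with $G - \{u,w\}$ connected, via the usual $3$-connected versus $2$-cut case analysis, and of the $2$-choosability of even cycles, but both are routine.) Alternatively, Erd\H{o}s--Rubin--Taylor reduce the $2$-connected case to induced even cycles and theta subgraphs, which are degree-choosable, and finish with exactly your greedy fact; either route fills the hole.
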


Recall here that a set $S$ of vertices from a graph $G$ is 
\define{biconnected} if the induced subgraph $G \restriction S$ remains
connected after removing any single vertex from $S$. A \define{block} of a
graph is a maximal biconnected set and a \define{Gallai tree} is a
connected graph whose blocks are complete graphs or odd cycles. 


In addition to making key use of this result, we also generalize it to Borel
graph colorings. Recall that if $Y$ is a set, we use $[Y]^{< \infty}$ to
denote the collection of finite subsets of $Y$, and if $Y$ is a standard Borel space then
so is $[Y]^{< \infty}$ with the Borel structure induced as a quotient of $\bigdisjointunion_{n \in \N} Y^n$.
Say that a locally finite Borel graph $G$ on $X$ is
\define{Borel degree-list-colorable} if for every Polish space and every
Borel function $L \from X \to [Y]^{< \infty}$ such that $|L(x)| = \deg_G(x)$,
there is a Borel coloring  $c \from X \to Y$ of $G$ from the lists $L$.
\begin{thm}\label{thm:Borel_Gallai}
Suppose that $G$ is a locally finite Borel graph on a standard Borel space
$X$, and that no connected component of $G$ is a Gallai tree. Then $G$ is Borel degree-list-colorable.
\end{thm}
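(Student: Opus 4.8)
The plan is to split $X$ into the union $X_{\mathrm{fin}}$ of the finite connected components and the union $X_\infty$ of the infinite ones. Both are Borel since $G$ is locally finite, and there are no $G$-edges between them, so it suffices to produce a Borel coloring from $L$ on each part separately. On $X_{\mathrm{fin}}$ the equivalence relation of lying in a common component has finite classes, hence is smooth, and every such component is a finite connected graph that is not a Gallai tree, so it admits a coloring from its lists by the theorem of Borodin and Erd\H{o}s--Rubin--Taylor. I would select these colorings in a Borel fashion by taking, in each component, the lexicographically least valid coloring with respect to fixed Borel orderings of the (finitely many) vertices of the component and of $Y$; this is a routine Lusin--Novikov argument.

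The substance is on $X_\infty$, where I would build the coloring by a single Borel recursion along a well-founded relation in which each vertex, at the moment it is colored, still has an uncolored $G$-neighbor. Since $|L(x)| = \deg_G(x)$, the at most $\deg_G(x) - 1$ colors already used on neighbors never exhaust the list, so the greedy rule ``use the least available color of $L(x)$'' always succeeds. The device guaranteeing one uncolored neighbor is a Borel spanning subforest $F \subseteq G$ of $X_\infty$ all of whose trees are one-ended. Orienting each tree toward its unique end makes the descendant set of every vertex finite, which yields a Borel rank function $\rho \from X_\infty \to \N$ with $\rho(v) < \rho(p(v))$, where $p(v)$ is the ($G$-adjacent) parent of $v$. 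Fixing in addition a Borel proper coloring $\gamma \from X_\infty \to \N$ of $G$ (which exists for locally finite Borel graphs by Kechris--Solecki--Todorcevic), I order the neighbors of each vertex by the pair $(\rho,\gamma)$ lexicographically.

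Because $\gamma$ is proper, no two $G$-adjacent vertices receive the same pair, so this orients every edge, and the relation $\{(w,v) : w \G v \text{ and } (\rho(w),\gamma(w)) <_{\mathrm{lex}} (\rho(v),\gamma(v))\}$ is Borel and well-founded of rank at most $\omega^2$ with finite predecessor sets. Defining $c$ by the greedy rule along this relation therefore yields a Borel function by Borel recursion; it takes values in the lists, and it is proper because every edge has a strictly later endpoint which avoids its earlier endpoint's color. The point is that $p(v)$ always has strictly larger $\rho$, hence is a later, uncolored neighbor, supplying the one unit of slack that the greedy rule needs.

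The hard part will be constructing the Borel one-ended spanning subforest of $X_\infty$, and this is exactly where the hypothesis that no component is a Gallai tree must enter: the components of the irrational-rotation graph $G_S$ of the introduction are bi-infinite paths, hence Gallai trees, and they admit neither a Borel one-ended spanning subforest nor a Borel coloring from two-element lists, the obstruction in both cases being the non-smoothness of the underlying orbit equivalence relation. I would aim to build $F$ by isolating inside each component a block that is neither a complete graph nor an odd cycle and using the resulting ``defect'' to orient all ends in a coherent, Borel way. I expect the genuinely delicate case to be that of two-ended, amenable, essentially one-dimensional components such as bi-infinite ladders: there no Borel one-ended spanning subforest exists (any finite-descendant orientation forces an essentially single cut per component, which would give a forbidden Borel transversal), so one must instead extract the slack directly from a recurring non-complete, non-odd-cycle block and color ``across'' the line without choosing a transversal. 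Packaging this thin case together with the subforest construction uniformly in a Borel manner is the main obstacle.
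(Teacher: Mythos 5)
Your scheme for the finite components and your greedy recursion along a well-founded orientation are both fine (the latter is essentially Lemma~\ref{lem:partial_dlc} of the paper). The genuine gap is that your entire treatment of $X_\infty$ rests on a Borel one-ended spanning subforest of every infinite component, and this object is not merely left unconstructed --- it does not exist under the hypotheses of the theorem, as you yourself observe. A bi-infinite ladder is $2$-connected, hence consists of a single infinite block, so it is not a Gallai tree and falls squarely under the theorem; yet when such components arise from a non-smooth equivalence relation, a Borel one-ended spanning subforest would canonically determine a finite complete section in each component (only finitely many one-ended trees can reach each of the two ends, since a connected subgraph reaching far to one side must occupy one of the two vertices of every sufficiently distant rung), contradicting non-smoothness. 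Your fallback --- ``extract the slack directly from a recurring block and color across the line'' --- is a restatement of the problem, not an argument. Note also that even in the cases where one-ended subforests do exist, Theorem~\ref{thm:general_one_end} produces them only modulo a null or meager set, so no route through that theorem can yield the purely Borel conclusion being claimed here; you would need genuinely Borel one-ended subforests, which is exactly what is unavailable.

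The paper's proof sidesteps this entirely: it never builds a one-ended spanning subforest of the components. Instead, the non-Gallai hypothesis is used to find a Borel set $B'$ meeting every component such that the connected components of $G \restriction B'$ are \emph{finite} and are not Gallai trees (every non-Gallai-tree component contains finite such witnesses, and a spread-out Borel family of them is selected using a Borel $\N$-coloring of an intersection graph). Proposition~\ref{prop:pointaway} then supplies a one-ended Borel function on $B \setminus B'$ pointing ``toward'' $B'$ --- this requires no hypothesis on ends and loses no null or meager set, because the points of $B'$ play the role of points at infinity --- so Lemma~\ref{lem:partial_dlc} colors $G \restriction (B \setminus B')$ from the lists. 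Each $x \in B'$ then retains a list of size at least its degree in $G \restriction B'$, and since the components of $G \restriction B'$ are finite connected non-Gallai trees, the finite Borodin--Erd\H{o}s--Rubin--Taylor theorem colors them from the leftover lists, with Lusin--Novikov uniformization making these finitely many choices Borel. In short, the hypothesis is exploited not to orient ends coherently (your plan), but to guarantee finite configurations that can be saved for last and absorb the deficit; that is the idea your proposal is missing.
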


Recall that if $G$ is a locally finite graph on $X$, then two rays $(x_i)_{i \in \N}$ and
$(y_i)_{i \in \N}$
in $G$ are \define{end-equivalent} if for every finite set $S \subset X$,
the rays eventually lie in the same connected component of $G \restriction
(X \setminus S)$. If $G$ is an acyclic graph, this is equivalent to $(x_i)$
and $(y_i)$ being tail equivalent. An \define{end} of a graph is an
end-equivalence class of rays. A \define{one-ended spanning subforest} of
$G$ is a acyclic graph $T \subset G$ on $X$ such that every $x \in X$
is incident on some edge in $T$, and every connected component of $T$ has
exactly one end.

The other tool we use to prove Theorem~\ref{thm:mb} is a new technique for
constructing $\mu$-measurable and Baire measurable one-ended spanning
subforests of acyclic Borel graphs. In the case where the connected
components of $G$ are Gallai trees and we cannot apply
Theorem~\ref{thm:Borel_Gallai}, we use a one-ended subforest of the Gallai
tree to give a skeleton along which we may color the graph to prove
Theorem~\ref{thm:mb}.

\begin{theorem}
\label{thm:general_one_end}
  Suppose that $G$ is a locally finite acyclic Borel graph on a standard
  Borel space $X$ such that no connected component of $G$ has $0$ or $2$
  ends.
  \begin{enumerate}
   \item Let $\mu$ be a Borel probability measure on $X$. Then there is a
   $\mu$-conull Borel set $B$ and a one-ended Borel function $f \from B \to
   X$ whose graph is contained in $G$.

   \item Let $\tau$ be a compatible Polish topology on $X$. Then there is a
   $\tau$-comeager Borel set $B$ and a one-ended Borel function $f \from B
   \to X$ whose graph is contained in $G$.
   \end{enumerate}
\end{theorem}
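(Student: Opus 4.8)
The plan is to reduce the problem to constructing a spanning subforest all of whose components are one-ended, and then to build such a forest by a maximality argument whose only serious content is measurable. First I would record a reformulation. Since $G$ is acyclic its components are trees, and a Borel $f$ with $\graph(f)\subseteq G$ is one-ended precisely when the undirected graph with edges $\{\{x,f(x)\}\}$ has all components one-ended. Given any Borel spanning subforest $F\subseteq G$ whose components are all one-ended, one recovers such an $f$ canonically by letting $f(x)$ be the neighbor of $x$ in $F$ lying on the ray from $x$ to the end of its $F$-component; this is Borel, because in a one-ended tree the direction toward the end is the unique neighbor whose side is infinite. Thus it suffices to produce, on a $\mu$-conull (respectively $\tau$-comeager) Borel set, a Borel spanning subforest of $G$ all of whose components are one-ended.

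Call a subforest $F\subseteq G$ \emph{good} if every component of $F$ is either finite or one-ended. The combinatorial heart of the matter is that a maximal good forest has no finite components, hence is exactly what we want (and is automatically spanning, since isolated vertices are finite components). Indeed, suppose $F$ is good and maximal under inclusion, and let $C$ be a finite component of $F$. Since no component of $G$ has $0$ ends, the $G$-component containing $C$ is infinite, so some vertex of $C$ has a $G$-neighbor outside $C$; let $e$ be such an edge and $C'$ the $F$-component it meets. As $C\neq C'$ and $G$ is acyclic, $F\cup\{e\}$ is again a forest; if $C'$ is finite then $C\cup C'$ is finite, and if $C'$ is one-ended then gluing the finite tree $C$ to $C'$ along $e$ leaves it one-ended. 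Either way $F\cup\{e\}$ is good, contradicting maximality. So every component is one-ended. Note this step uses only that all $G$-components are infinite; the hypothesis that no component has exactly $2$ ends will enter solely in the measurable construction below.

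It therefore remains to construct a maximal good forest in a Borel fashion after deleting a null (respectively meager) set, and this is the main obstacle. The naive approach—fix a Borel proper edge-coloring of $G$ with countably many colors and add an edge, in order of color, whenever the result stays good—is Borel, but goodness is a property of behavior at infinity: a component that is one-ended at every finite stage can acquire a second end in the limit, so the limiting forest need not even be good. That this is a genuine difficulty is shown by the irrational rotation graph $G_S$, whose components have two ends: splitting each bi-infinite line into two one-ended rays would demand a Borel choice of one cut point per orbit, which is impossible modulo null by ergodicity. The plan is to defeat this using that every component has at least three ends: a growing finite cluster then always has branch points available, so its outlet can be routed toward one end while it is simultaneously pinched off from another, keeping each vertex's catchment basin finite and forbidding a second escaping ray. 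The hard step is to show that the set of vertices whose basin stays infinite—the potential two-ended residue—is null, respectively meager.

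I would establish this smallness by a mass-transport argument in the measure case and the Baire-category analogue in the topological case, running the two through the same cluster construction. For a general Borel probability measure $\mu$, which need not be $G$-invariant, I would first reduce to the $G$-invariant case via the dichotomy between the existence of an invariant measure and $G$-compressibility, using that the set of vertices with infinite basin is $\EG$-invariant: on the compressible part the forest can be built directly from a vanishing sequence of Borel markers, while on the invariant part the mass-transport estimate forces the residue to be null precisely because every component has at least three ends. The chief difficulty throughout is passing from the local, stage-by-stage Borel data to the global one-endedness of the limiting forest, and isolating the three-ends hypothesis as exactly the leverage that removes the two-ended obstruction witnessed by $G_S$.
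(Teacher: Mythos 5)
Your reformulation (a one-ended Borel function on an invariant conull/comeager set is the same thing as a Borel spanning subforest all of whose components are one-ended) is correct and matches the paper, and your diagnosis of the difficulty is also right: stage-by-stage goodness is destroyed in limits, and the two-ended components of the irrational-rotation graph are the real obstruction. But the proof stops exactly where the theorem begins: the entire positive content is delegated to two claims that are asserted rather than proved. First, the claim that \emph{on the compressible part the forest can be built directly from a vanishing sequence of Borel markers} cannot be right as it stands, because the mechanism cited is available everywhere: every aperiodic countable Borel equivalence relation admits a decreasing sequence of Borel complete sections with empty intersection, so if such markers sufficed one would obtain a genuinely Borel one-ended subforest of, e.g., the $d$-regular acyclic graphs of Marks, contradicting the fact (noted right after Corollary~\ref{cor:oneend}) that those graphs admit no Borel one-ended function. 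What is actually needed, and what Lemma~\ref{lem:iterative_oes} requires, is a decreasing sequence $A_0 \supset A_1 \supset \ldots$ with null (or meager) intersection such that $A_{i+1}$ meets every connected component of $G \restriction A_i$ --- a condition far stronger than being a complete section for $E_G$, and producing such a sequence is the whole game. Second, on the invariant part you invoke a ``mass-transport estimate'' that ``forces the residue to be null,'' but no transport function, no inequality, and indeed no precise description of the cluster-growing procedure whose residue is to be bounded are given; and for part (2) there is no Baire-category analogue of the mass-transport principle, so the category half has no argument behind it at all. (The preliminary Zorn-style remark also floats free: maximal good forests need not exist, precisely because goodness is not closed under increasing unions, as you yourself observe.)

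For comparison, the paper needs neither invariant measures nor any compressibility dichotomy. It isolates the notion of an \emph{ample} graph (no isolated ends), reduces to that case (Lemma~\ref{lem:reduce_to_ample}), and then proves a quantitative deletion claim: in a bounded-degree ample acyclic graph one can find, via a Borel coloring of an auxiliary bounded-degree graph on the branch vertices, a Borel set of measure at least $d^{-3}$ whose removal leaves a graph that is still ample and still meets every component. Iterating produces the sets $A_i$ above with $\mu\left(\biginters_i A_i\right) = 0$ for an \emph{arbitrary} measure $\mu$ (quasi-invariance is used only to discard the saturation of the intersection), and Proposition~\ref{prop:pointaway} then assembles the one-ended function; the category case runs the same claim localized on a countable basis of open sets. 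Moreover, the measure case for merely locally finite (unbounded degree) graphs requires further tools in the paper --- Borel edge colorings, $\mu$-hyperfiniteness of the $0$- and $2$-ended pieces, and Adams's end-selection theorem --- none of which your outline, which works only with local finiteness, addresses. So the gap is not a repairable detail: the measurable construction itself, which is the content of the theorem, is missing.
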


We also discuss a version of Theorem~\ref{thm:general_one_end}.1 for locally countable
graphs in Section~\ref{section:subforests}.

In Section \ref{sec:applications} we
apply Theorem \ref{thm:mb} to graphs arising from group actions, and we
apply the methods of Section \ref{section:subforests} along with results
from probability to obtain factor of IID $d$-colorings of Cayley graphs of
degree $d$, apart from two exceptional cases.

Finally, in the case $d = 2$ we show that the ergodic-theoretic obstruction
discussed above is in a sense the only counterexample to Brooks's theorem.

\begin{thm}\label{thm:mb2}
  Suppose $G$ is a Borel graph on a standard Borel space $X$ with vertex
  degree bounded by $d = 2$ such that $G$ contains no odd cycles. Let
  $\evenG$ be the equivalence relation on $X$ where $x \evenG y$ if $x$ and
  $y$ are connected by a path of even length in $G$.
  \begin{enumerate}
  \item Let $\mu$ be a $G$-quasi-invariant Borel probability measure on
  $X$. Then $G$ admits a $\mu$-measurable $2$-coloring if and only if there does not
  exist a non-null $G$-invariant Borel set $A$ such that every
  $\evenG$-invariant Borel subset of $A$ differs from a $G$-invariant set by a
  nullset.

  \item Let $\tau$ be a $G$-quasi-invariant Polish topology
  compatible with the Borel structure on $X$. Then $G$ admits a
  Baire measurable $2$-coloring if and only if there does not exist a non-meager
  $G$-invariant Borel set $A$ so that every $\evenG$-invariant Borel subset
  of $A$ differs from a $G$-invariant set by a meager set.
  \end{enumerate}
\end{thm}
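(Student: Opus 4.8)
The plan is to recast a proper $2$-coloring as the trivialization of a parity cocycle and then to prove the stated equivalence as an ergodicity dichotomy. Since $G$ contains no odd cycle, for $x \mathrel{E_G} y$ the parity of the length of a path from $x$ to $y$ is well defined and gives a Borel cocycle $\sigma \from E_G \to \Z/2\Z$ with $\sigma(x,y)=0$ exactly when $x \evenG y$. A map $c \from X \to \{0,1\}$ is a proper $2$-coloring precisely when $\sigma(x,y) = c(x)+c(y) \pmod 2$ for all $x \mathrel{E_G} y$, equivalently when $c^{-1}(0)$ is an $\evenG$-invariant set meeting each connected component in exactly one of its (at most two) $\evenG$-classes. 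So a $2$-coloring is a measurable selection of one $\evenG$-class from each component. First I would Borel-color the components that carry a Borel-definable basepoint: isolated vertices (colored $0$), finite components and finite paths (smooth, hence Borel $2$-colorable since they are bipartite), and one-ended rays (color by parity of the distance to the unique degree-$1$ vertex). Absorbing these into the colored region, I may assume every remaining component is a bi-infinite line, which is where the obstruction lives, and I write ``negligible'' for null in part (1) and for meager in part (2).

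For the easy direction I argue the contrapositive. Suppose a non-negligible $G$-invariant Borel $A$ witnesses the obstruction, so every $\evenG$-invariant Borel subset of $A$ agrees off a negligible set with a $G$-invariant set, and suppose toward a contradiction that $c$ is a measurable $2$-coloring. The set $c^{-1}(0) \cap A$ is $\evenG$-invariant and, using $G$-quasi-invariance, agrees off a negligible set with a Borel $\evenG$-invariant $S \subseteq A$; by hypothesis $S \triangle D$ is negligible for some $G$-invariant Borel $D$. On each component $K$ of $A$ the set $S$ is one $\evenG$-class while $D \cap K$ is $\emptyset$ or $K$, so $S \triangle D$ contains a full $\evenG$-class of $K$ and in particular meets $K$. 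Thus the negligible set $S \triangle D$ meets every component of $A$, so its $G$-saturation is all of $A$; by $G$-quasi-invariance this saturation is negligible, and hence $A$ is negligible---a contradiction. This is the abstract form of the irrational-rotation argument from the introduction, and quasi-invariance is exactly what lets one pass from ``one class in each component is negligible'' to ``the whole component is negligible''.

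For the hard direction I build a coloring by exhaustion, using the following reduction: if $A$ is $G$-invariant and fails to witness the obstruction, then there is an $\evenG$-invariant Borel $S \subseteq A$ with $S \triangle D$ non-negligible for every $G$-invariant Borel $D$, and then the set $A_1 = \{x \in A : \emptyset \neq S \cap [x]_{E_G} \neq [x]_{E_G}\}$ of components split by $S$ is $G$-invariant, non-negligible, and Borel $2$-colorable (put $c=0$ on $S \cap A_1$ and $c=1$ on $A_1 \setminus S$); non-negligibility holds because otherwise $S$ would agree off a negligible set with the $G$-invariant set $\{x : [x]_{E_G} \subseteq S\}$. Now consider the family of $G$-invariant Borel sets $B$ such that $G \restrict B$ admits a measurable $2$-coloring; since distinct $G$-invariant sets span no edges, this family is closed under countable unions. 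In part (1) choose $B$ with $\mu(B)$ equal to the supremum of measures over this family; in part (2) let $B$ be the union of a maximal family of pairwise disjoint non-meager colorable $G$-invariant Borel sets, which is countable because the algebra of Borel sets modulo meager is ccc. If $A = X \setminus B$ were non-negligible then, as it cannot witness the obstruction, the reduction produces a non-negligible colorable $G$-invariant $A_1 \subseteq A$, so $B \cup A_1$ is colorable of strictly larger measure, respectively contradicts maximality. Hence $A$ is negligible; coloring $G \restrict B$ measurably and fixing any proper $2$-coloring of the bipartite graph $G \restrict (X \setminus B)$ (automatically measurable off a negligible set) yields, since $B$ is $G$-invariant, a measurable $2$-coloring of $G$.

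The main obstacle is this hard direction. The crux is the reduction step: the hypothesis only asserts the \emph{qualitative} failure of the obstruction on $A$, and one must extract from it a \emph{non-negligible} piece that is actually $2$-colorable, for otherwise the exhaustion cannot make quantitative progress. The second delicate point is to run the exhaustion uniformly in the two settings: the measure case is closed off by maximizing $\mu(B)$, whereas the category case depends on the countable chain condition of the category algebra $\mathrm{Borel}/\mathrm{meager}$ (and of its subalgebra of $G$-invariant sets) to guarantee both that the maximal family is countable and that its union is again colorable. I expect the easy direction to be comparatively routine once the cocycle reformulation is in place, with $G$-quasi-invariance doing the essential work there.
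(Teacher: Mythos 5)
Your proof is correct and takes essentially the same approach as the paper: your forward direction is the same quasi-invariance argument (a color class intersected with the witness $A$ is $\evenG$-invariant, and its symmetric difference with any $G$-invariant set meets every component of $A$, so quasi-invariance forces $A$ to be negligible), and your converse uses the paper's construction exactly---from a Borel $\evenG$-invariant $S \subseteq A$ that is far from $G$-invariant, two-color the $G$-invariant set of components split by $S$ using $S$ and its complement, then exhaust. Your cocycle framing, the preliminary reduction to bi-infinite lines, and the explicit supremum/ccc maximality arguments are just fleshed-out versions of what the paper compresses into ``measure theoretic exhaustion'' and ``the proof of 2 is similar'' (and both your argument and the paper's implicitly assume the witness has no non-negligible set of isolated vertices, where $\evenG$- and $G$-invariance coincide).
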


\section{Preliminaries}\label{sec:prelims}

A (simple, undirected) \define{graph} on a set of vertices $X$ is a
symmetric irreflexive relation on $X$. Given a graph $G$ on $X$, we say that two
points $x,y \in X$ are \define{neighbors} or are \define{adjacent} in $G$
if $x \mathrel{G} y$. The ($G$-)\define{degree} of a vertex $x$, denoted
$\deg_G(x)$, is the number of neighbors of $x$
and a graph $G$ has bounded degree $d$ if every vertex has
degree at most $d$. We say that $G$ is \define{locally finite} (resp.\ \define{locally countable}) if the degree
of every vertex in $G$ is finite (resp.\ countable). A set $A \subset X$
of vertices of $G$ is ($G$-)\define{independent} if for every $x,y \in A$
it is not the case that $x \mathrel{G} y$. If $f \from X \to X$ is a
function, then the graph $G_f$ generated by $f$ is defined by $x
\mathrel{G_f} y$ if $x \neq y$ and either $f(x) = y$ or $f(y) = x$.

A (simple) \define{path} in a graph $G$ is a finite sequence $x_0, \ldots,
x_n$ of distinct vertices such that $x_0
\mathrel{G} x_1 \mathrel{G} \ldots \mathrel{G} x_n$. We say that such a path has
\define{length}
$n$. A
(simple)
\define{ray} is an infinite sequence $(x_i)_{i \in \N}$ of distinct vertices such that $x_i
\mathrel{G} x_{i+1}$ for every $i \in \N$, and a \define{line} is a bi-infinite
sequence $(x_i)_{i \in \Z}$ such that $x_i \mathrel{G} x_{i+1}$ for every
$i \in \Z$.
If $G$ is a graph on $X$,
then the graph metric $d_G: X^2 \to \N \union \{\infty\}$ on $G$ maps $(x, y)
\in X^2$ to the length of the shortest path connecting $x$ and $y$, if such a
path exists; otherwise we set $d_G(x,y) = \infty$. A \define{cycle} in a graph $G$ is a sequence of vertices $x_0
\mathrel{G} x_1 \mathrel{G} x_2 \mathrel{G} \ldots \mathrel{G} x_n = x_0$ such that $n
> 2$, and $x_i \neq x_j$ for all $i< j < n$. We say the length of such a
cycle is $n$. We say that a graph is
\define{acyclic} if it does not contain any cycles. If $G$ is a graph on
$X$, then we let $E_G$ be the connectedness relation of $G$, where $x
\mathrel{E_G} y$ if there is a path in $G$ from $x$ to $y$. We say that a
set $A \subset X$ is \define{$G$-invariant} if it is $E_G$-invariant, that is, a
union of connected components of $G$.

A \define{Borel graph} is a graph whose vertices are the elements of a
standard Borel space $X$, and whose edge relation is Borel as a subset of
$X \times X$. The restriction $G \restrict A$ of $G$ to a set $A \subset X$
is the graph on $A$ is the induced subgraph obtained by restricting the relation $G$ to $A$. If $G$
is a Borel graph, and $A$ is a Borel set, then since $A$ inherits the
standard Borel structure of $X$, we see that $G \restrict A$ is also a
Borel graph.

Let $X$ and $Y$ be standard Borel spaces. Let $\mu$ be a Borel probability
measure on $X$. We say that a function $f: X\rightarrow Y$ is
\define{$\mu$-measurable} if it is measurable for the completion of $\mu$.
Let $\tau$ be a compatible Polish topology on $X$ (by \define{compatible}
we mean that the $\sigma$-algebra generated by the $\tau$-open sets coincides
with the given Borel $\sigma$-algebra on $X$). We say that a function
$f:X\rightarrow Y$ is \define{Baire measurable (with respect to $\tau$)} if
it is measurable for the $\sigma$-algebra of sets which have the Baire
property with respect to the completion of $\tau$; the smallest $\sigma$-algebra containing the Borel sets and all $\tau$-meager sets.

There is an equivalence between the admitting a
$\mu$-measurable or Baire measurable coloring, and admitting a Borel
coloring modulo an invariant null or meager set:

\begin{prop}\label{prop:measurable_equiv}
  Suppose $G$ is a locally countable Borel graph on a standard Borel space
  $X$. Suppose that $G$ admits some $n$-coloring.
  \begin{enumerate}
    \item Let $\mu$ be any Borel probability measure on $X$. Then $G$
    admits a $\mu$-measurable $n$-coloring if and only if there is a
    $\mu$-conull
    $G$-invariant Borel set $A \subset X$ such that
    $G \restrict A$ has a Borel $n$-coloring.

    \item Let $\tau$ be any Polish topology compatible with the Borel
    structure on $X$. Then $G$
    admits a Baire measurable $n$-coloring if and only if there is a
    comeager
    $G$-invariant Borel set $A \subset X$ such that
    $G \restrict A$ has a Borel $n$-coloring.
  \end{enumerate}
\end{prop}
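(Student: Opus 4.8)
I would handle the two implications separately, and within the forward implication treat the measure and category cases in parallel, since the easy direction is the one actually used to deduce the main theorems from the Borel constructions. For the reverse implication, suppose $A$ is a $\mu$-conull (resp.\ comeager) $G$-invariant Borel set and $c_A$ is a Borel $n$-coloring of $G \restrict A$. Since $A$ is $G$-invariant there are no $G$-edges between $A$ and $X \setminus A$, so I can paste $c_A$ together with any $n$-coloring of $G \restrict (X \setminus A)$ — which exists because $X \setminus A$ is $G$-invariant and $G$ admits an $n$-coloring by hypothesis — to get an $n$-coloring $c$ of all of $G$. Letting $c_0$ be the Borel function equal to $c_A$ on $A$ and constantly $1$ off $A$, we have $c = c_0$ on the $\mu$-conull (resp.\ comeager) set $A$, so $c$ agrees with a Borel function modulo a $\mu$-null (resp.\ meager) set; since every subset of a null (resp.\ meager) set is $\mu$-measurable (resp.\ has the Baire property), $c$ is $\mu$-measurable (resp.\ Baire measurable).

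For the forward implication the starting point in both cases is the regularity fact that a $\mu$-measurable (resp.\ Baire measurable) map into a standard Borel space agrees with a Borel map $c_0$ on a $\mu$-conull (resp.\ comeager) Borel set $B_0$; then $c_0 \restrict B_0$ is a Borel $n$-coloring of $G \restrict B_0$, since on $B_0$ it agrees with the genuine coloring $c$. The obstacle is that $B_0$ need not be $G$-invariant. The natural remedy is to pass to the largest $G$-invariant Borel subset of $B_0$, namely $A = X \setminus [X \setminus B_0]_{E_G}$; here $A$ is Borel because $E_G$-saturations of Borel sets are Borel by Luzin--Novikov, and $c_0 \restrict A$ is a Borel $n$-coloring of $G \restrict A$ because $A$ is $G$-invariant and contained in $B_0$. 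Thus everything reduces to showing that the saturation $[N_0]_{E_G}$ of the exceptional set $N_0 = X \setminus B_0$ is null (resp.\ meager).

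In the category case this reduction succeeds cleanly. Using the Feldman--Moore theorem I would write $E_G = \bigcup_n \graph(\gamma_n)$ for Borel automorphisms $\gamma_n$, and refine $\tau$ to a finer Polish topology with the same Borel sets in which every $\gamma_n$ is a homeomorphism (Becker--Kechris). Refining a Polish topology while preserving the Borel $\sigma$-algebra leaves the $\sigma$-ideal of meager Borel sets unchanged, so $B_0$ remains comeager and Baire measurability is unaffected; and now each $\gamma_n$ carries meager sets to meager sets, so $[N_0]_{E_G} = \bigcup_n \gamma_n(N_0)$ is a countable union of meager sets and hence meager. Therefore $A$ is comeager, finishing the category case.

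The measure case is where I expect the genuine difficulty, and the naive argument fails outright: for a general (non-quasi-invariant) Borel probability measure the $E_G$-saturation of a $\mu$-null set need not be null, so $[N_0]_{E_G}$ can have positive measure, and there is no measure-theoretic analogue of the topology refinement that would make arbitrary Borel automorphisms $\mu$-nonsingular. To circumvent this I would pass to the $E_G$-quasi-invariant measure $\tilde\mu = \sum_n 2^{-(n+1)} (\gamma_n)_* \mu$, which satisfies $\mu \ll \tilde\mu$, so that any $\tilde\mu$-conull set is automatically $\mu$-conull; under $\tilde\mu$ the saturation of a null set is null, and the argument of the previous paragraph goes through. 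The delicate step — and the one I expect to require the most care — is that $\mu$-measurability of $c$ does not transfer for free to $\tilde\mu$-measurability, since $\tilde\mu$ has strictly more null sets. To bridge this I would isolate and prove first the key lemma that a $\mu$-measurable $G$-invariant set coincides, modulo a $\mu$-null set, with a Borel $G$-invariant set; the relevant exceptional sets here can be taken $G$-invariant, and one can also exploit the rigidity of proper colorings, namely that the color of a vertex is constrained by the colors of its neighbors, to propagate Borelness from $B_0$ into its saturation along the graph.
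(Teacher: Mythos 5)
Your reverse implication is correct and is essentially the paper's own pasting argument, so that half needs no changes. Your forward implication, however, takes a different route from the paper (the paper pushes $c$ through Feldman--Moore automorphisms $T_i$, finds conull/comeager sets $A_i$ on which $c \circ T_i$ is Borel, and recovers $c$ Borel-ly on $A = [\bigcap_i A_i]_{E_G}$ via the least $i$ with $T_i^{-1}(x) \in \bigcap_j A_j$; you instead pass to the largest invariant subset of the regularity set $B_0$), and it is in this half that your argument has genuine gaps.

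In the category case, your key claim that refining a Polish topology while preserving the Borel $\sigma$-algebra ``leaves the $\sigma$-ideal of meager Borel sets unchanged'' is false. Refine the usual topology of $[0,1]$ so that the Cantor set $C$ becomes clopen: the Borel sets are unchanged, but $C$, which was meager, is now a nonempty open subset of a Polish (hence Baire) space and so is non-meager. Your argument needs both transfers --- that $B_0$ stays comeager after the Becker--Kechris refinement, and that meagerness of $[N_0]_{E_G}$ in the refined topology pulls back to the original $\tau$ --- and neither holds in general. It is no accident that the Zakrzewski result quoted in Section 2 of the paper provides only a one-directional inclusion of meager ideals.

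In the measure case you have correctly identified the crux (saturations of null sets need not be null, and $\mu$-measurability does not transfer to the dominating quasi-invariant measure), but your proposed repairs cannot close the gap. For $n \geq 3$ proper colorings have no rigidity: the colors of the neighbors of a vertex do not determine its color, so there is no ``propagation of Borelness along the graph''; indeed the failure of Borel extendability of partial colorings is exactly what makes this paper nontrivial. Moreover, no repair is possible at this level of generality, because the forward implication is genuinely false for non-quasi-invariant $\mu$. Let $H$ be the graph on $2^\N$ joining $u \neq v$ iff $u$ and $v$ differ in a finite, odd number of coordinates; every Borel $H$-independent set is null for the uniform measure $\nu$ (a density-point argument using single-coordinate flips), yet $H$ is abstractly $2$-colorable by parity within each $E_0$-class. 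Put $X = 2^\N \times \{0,1\}$, let $G$ be the copy of $H$ on $2^\N \times \{0\}$ together with the matching $(z,0) \sim (z,1)$, and let $\mu$ be the copy of $\nu$ on $2^\N \times \{1\}$. Coloring each $(z,1)$ with color $0$ and the $\mu$-null set $2^\N \times \{0\}$ with an abstract $\{1,2\}$-coloring of $H$ gives a $\mu$-measurable $3$-coloring; but any $\mu$-conull $G$-invariant Borel $A$ contains $\{(z,0) : (z,1) \in A\}$, a $\nu$-conull copy of $2^\N$, on which a Borel $3$-coloring of $G \restriction A$ would yield a non-null Borel $H$-independent set --- impossible. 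For comparison, the paper's own proof invokes quasi-invariance at exactly one point, the assertion that $c \circ T_i$ is $\mu$-measurable (resp.\ Baire measurable): preimages under a Borel automorphism of sets in the completion stay in the completion precisely when the automorphism pulls null (meager) sets back to null (meager) sets. So the correct fix for your proof, as for the paper's, is to prove the forward direction for quasi-invariant $\mu$ and $\tau$ (which is how it is applied later in the paper, e.g.\ in Theorem 1.5 and the appendix), where your ``largest invariant subset of $B_0$'' argument then goes through immediately and is, if anything, cleaner than the Feldman--Moore patching.
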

\begin{proof}
We begin with the direction $\Rightarrow$ of \emph{1}. Suppose $c$ is a
$\mu$-measurable $n$-coloring of $G$. By the Feldman-Moore theorem
\cite{KM}*{Theorem 1.3}, let
$\{T_i\}_{i \in \N}$ be a set of Borel automorphisms of $X$ so that
$\EG = \bigunion_{i \in \N} T_i$.
Now for each $i$, since $c \circ T_i$ is $\mu$-measurable, there is a
$\mu$-conull  set $A_i$ such that $c \circ T_i \restrict A_i$ is Borel.
Thus, if $A = [\biginters_{i \in \N}A_i]_{E_G}$, then $A$ is $\mu$-conull, and $c
\restrict A$ is Borel. This is because for all $x \in A$, if $i$ is least
such that $T_i^{-1}(x) \in A_i$, then $c(x) = (c \circ T_i)(T_i^{-1}(x))$.

The direction $\Leftarrow$ of \emph{1} is straightforward. Given a Borel
$n$-coloring $c$ of $G \restrict A$ where $A$ is a $\mu$-conull
$G$-invariant set, let $c'$ be an arbitrary $n$-coloring of $G
\restrict (X \setminus A)$. Then $c \union c'$ is a $\mu$-measurable
coloring of $G$.

The proof of part \emph{2} is identical to the above. Simply replace the phrase
$\mu$-conull with comeager (with respect to $\tau$), and $\mu$-measurable
with Baire measurable (with respect to $\tau$).
\end{proof}

While we have stated our main results in terms of the
existence of $\mu$-measurable and Baire measurable colorings, throughout
the paper we will
mostly work with the equivalent formulations given by
Proposition~\ref{prop:measurable_equiv} above.
Note here that the classical Brooks's theorem shows the existence of the
requisite $d$-coloring that we will need to apply the above proposition.
An analogous fact is also true for list-coloring.

Suppose $G$ is a locally countable Borel graph on a standard Borel space $X$, and $\mu$ is a Borel probability
measure on $X$. Then we say that $\mu$ is
\define{$G$-quasi-invariant} if every $\mu$-null set is contained in a
$G$-invariant $\mu$-null set. Now
for every Borel probability measure $\mu$ on
$X$, there exists a $G$-quasi-invariant Borel probability measure $\mu'$ on
$X$ such every $\mu'$-null set
is $\mu$-null
(that is, $\mu'$ dominates $\mu$).
This follows from the Feldman-Moore theorem \cite{KM}*{Theorem 1.3}
by letting $\{T_i\}_{i \in \N}$ be a set of Borel automorphisms of $X$ such
that $\EG = \bigunion_i \graph(T_i)$, and then setting $\mu'(A) = \sum_{i
\geq 1} 2^{-n} \nu(T_i(A))$ (see~\cite{KM}*{Section 8}).
A key property of a quasi-invariant measure is that if $A$ is
$\mu'$-conull, then it contains a $G$-invariant $\mu'$-conull set. This is
because the set $\{x : x \notin A \land x \in [A]_{E_G}\}$ is null since it is
contained in the complement of $A$, and hence is saturation is null.

Similarly, suppose $G$ is a Borel graph on $X$, and $\tau$ is a compatible
Polish topology for $X$. Then we say that $\tau$ is
\define{$G$-quasi-invariant} if every $\tau$-meager set is contained in a
$G$-invariant $\tau$-meager set. It follows from a result of
Zakrzewski~\cite{Z} that
if $G$ is a locally countable Borel graph, then for every compatible Polish
topology $\tau$ on $X$ there is a $G$-quasi-invariant compatible Polish
topology $\tau'$ such that every $\tau'$-meager set is $\tau$-meager.

The combination of the above discussion and
Proposition~\ref{prop:measurable_equiv} justifies our assumption from now
on that our measures and topologies are quasi-invariant with respect to the
graphs we consider. This is because Proposition~\ref{prop:measurable_equiv}
allows us to reformulate Theorems~\ref{thm:mb} and \ref{thm:mb2} to state
the existence of a Borel $d$-coloring of $G \restrict A$ for some
$G$-invariant Borel $A$ which is conull or comeager. Thus, the assumption
of quasi-invariance is harmless since we may always pass to a
quasi-invariant measure or topology without adding any new conull or
comeager sets. Our assumption of quasi-invariance is helpful because it
frees us from talking constantly about invariant sets when we discard null
or meager set, since in this case a null set or meager set of vertices is always contained
in a null set or meager $G$-invariant set respectively.

\section{One-ended subforests}\label{section:subforests}

This section focuses on definably isolating one-ended subforests of various
classes of locally finite acyclic graphs. These subforests will subsequently provide a
skeleton along which to construct a coloring. A few of our results in this
section generalize to locally countable graphs and in these cases we
indicate what changes need to be made in our arguments to work in this
greater generality.  It is worth noting that since the graphs under investigation are
acyclic, the two natural definitions of end-equivalence in terms of deleting vertices
and deleting edges in fact coincide.

Suppose $G$ is an acyclic graph on $X$ and every connected component of $G$
has one end. We can form a function $f \from X \to X$ as follows. For each
$x \in X$, there is a unique infinite $G$-ray $(x_i)_{i \in \N}$ such that
$x = x_0$. Then
define $f(x) = x_1$, so $f$ points ``towards'' the unique end in the graph.
Note that $f$ generates $G$, and since $G$ is everywhere one-ended, there
is no infinite \define{descending} sequence $x_0, x_1, \ldots \in X$ with
$f(x_{i+1}) = x_i$ for every $i$. That is, as a relation, $f$ is well-founded.
Conversely, suppose $f \from X \to X$ is a function containing no infinite
descending sequence (in particular, no fixed points). Then the graph $G_f$ generated by $f$ has one end in every
connected component. Thus, if $H$ is an graph on $X$, finding a one-ended
spanning subforest of $H$ is equivalent to finding a function
$f \from X \to X$ contained in $H$ admitting no infinite descending sequences.

Now suppose $f \from X \to X$ is a partial function. We say that $f$ is
\define{one-ended} if there is no infinite descending sequence $x_0, x_1,
\ldots \in X$ in $f$. Note that if $f$ is a one-ended partial function,
then any connected component of $G_f$ containing a point $y$ not in the
domain of $f$ will contain $0$ ends (and not one end!). Our terminology
here is inspired by regarding such a $y \notin \dom(f)$ as a ``point at
infinity''. In the case where $f$ is finite-to-one, by K\"onig's lemma, $f$
is one-ended if and only if all of its backward orbits $f^{-\N}(x) =
\bigunion_{n \in \N} f^{-n}(x)$ are finite.

\begin{proposition}\label{prop:pointaway}
  Suppose that $G$ is a locally finite Borel graph on a standard Borel
  space $X$, and $A \subset X$ is Borel.  Then there is a one-ended Borel
  function $f \from [A]_\EG \setminus A \to [A]_\EG$ whose graph is
  contained in $G$.
\end{proposition}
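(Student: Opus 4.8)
The plan is to let $f$ ``point away from $A$'' using the distance to $A$, reversing direction only on the finite protrusions where moving away is impossible. Set $\rho(x) = d_G(x,A) = \min_{a \in A} d_G(x,a)$; this is a Borel function on $[A]_\EG$ with finite values, vanishing exactly on $A$. Call a vertex $x \in [A]_\EG$ \emph{escaping} if there is an infinite ray $x = y_0, y_1, y_2, \dots$ with $\rho(y_{i+1}) = \rho(y_i) + 1$ for all $i$, i.e.\ a ray along which the distance to $A$ strictly increases at every step. I would then define $f$ on $[A]_\EG \setminus A$ as follows: if $x$ is escaping, let $f(x)$ be a neighbor $y$ with $\rho(y) = \rho(x) + 1$ that is again escaping; if $x$ is not escaping, let $f(x)$ be a neighbor $y$ with $\rho(y) = \rho(x) - 1$. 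Such a neighbor always exists—for escaping $x$ the tail of its witnessing ray begins with an escaping away-neighbor, and for any $x \notin A$ the second vertex of a geodesic to $A$ is a neighbor one step closer—so $f$ is total on $[A]_\EG \setminus A$ with $\graph(f) \subset G$.

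The heart of the argument is that $f$ has no infinite descending sequence. The key structural observation is that the escaping property is \emph{monotone along descending sequences}: since an escaping vertex is always sent to an escaping neighbor, $f(x') = x$ with $x'$ escaping forces $x$ escaping; contrapositively, once a descending sequence $x_0, x_1, \dots$ (with $f(x_{i+1}) = x_i$) reaches a non-escaping vertex, every later term is non-escaping. Thus any such sequence is either escaping throughout, or escaping on an initial segment and non-escaping thereafter. In the first case each step gives $\rho(x_{i+1}) = \rho(x_i) - 1$, yielding an infinite strictly decreasing sequence in $\N$, which is absurd. In the second case the non-escaping tail $x_k, x_{k+1}, \dots$ satisfies $\rho(x_{i+1}) = \rho(x_i) + 1$ at every step, so it is an infinite $\rho$-increasing ray witnessing that $x_k$ is escaping, contradicting that $x_k$ is non-escaping. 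Hence no infinite descending sequence exists and $f$ is one-ended. Note this argument uses neither acyclicity nor any bound on the number of ends.

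It remains to verify Borelness, which I expect to be the only delicate point. The function $\rho$ is Borel, and for a locally finite Borel graph one may fix a Borel enumeration of each vertex's neighbors (via a Borel proper edge coloring, or Feldman--Moore), so both selection rules become Borel by choosing the least admissible neighbor. The subtle ingredient is the Borelness of the escaping predicate: a priori ``there is an infinite $\rho$-increasing ray from $x$'' is merely analytic, but since $G$ is locally finite, K\"onig's lemma lets me replace it by ``for every $n$ there is a $\rho$-increasing path of length $n$ starting at $x$'', a countable intersection of Borel conditions and hence Borel. With this reformulation both the escaping set and the escaping-away-neighbor selection are Borel, completing the construction. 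The one place requiring genuine care is carrying out these finitary reformulations uniformly; the combinatorial core—the monotonicity of escaping together with the two-case distance argument—is elementary and invokes neither a measure nor a topology, so the result is fully Borel with no exceptional components.
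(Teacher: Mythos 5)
Your proposal is correct and is essentially the paper's own proof: your ``escaping'' set is precisely the paper's set $B$ (Borel via K\"onig's lemma), and your two-case definition of $f$---pointing away from $A$ on escaping vertices, toward $A$ on non-escaping ones, with a Borel selection of neighbors---matches the paper's construction exactly. The only cosmetic difference is in verifying one-endedness: the paper shows each backward orbit $f^{-\N}(x)$ is finite (using K\"onig's lemma a second time), whereas you argue directly on descending sequences via the monotonicity of the escaping property; both verifications are sound.
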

\begin{proof}
  Without loss of generality we may assume that $[A]_\EG = X$ since $G
  \restriction [A]_{\EG}$ is a Borel graph. Let $B$ be
  the set of $x \in X\setminus A$ such that there exists a
  $G$-ray $(x_i)_{i \in \N}$ with $x_0 = x$ and $d(x_{i+1},A) > d(x_i,A)$
  for all $i \in \N$. Note that $B$ is Borel by K\"{o}nig's lemma.
  By Lusin-Novikov uniformization~\cite{K}*{Theorem 18.10}, there is a Borel function $f \from X
  \setminus A \to X$ such that
  \begin{enumerate}
    \item if $x \in B$, then $f(x)$ is a neighbor of $x$ such that $f(x)
    \in B$ and $d(f(x),A) > d(x,A)$.
    \item if $x \notin B$, then $f(x)$ is a neighbor of $x$ such
    that $d(f(x),A) < d(x,A)$.
  \end{enumerate}
  To see that $f$ is as desired, suppose first that $x \notin B$.  Then
  $f^{-\N}(x) \subset X \setminus B$, and if $f^{-\N}(x)$ were infinite an
  application of K\"{o}nig's lemma would allow the construction of an
  injective $G$-ray as in the definition of $B$, contradicting the fact
  that $x \notin B$.  On the other hand, if $x \in B$ then $f^{-\N}(x)
  \inters B$ is finite and is in fact contained in $\bigunion_{i < d(x,A)}
  f^{-i}(x)$.  Consequently $f^{-\N}(x)$ is the union of this finite set
  with $\bigunion_{i \leq d(x,A)}\{f^{-\N}(y) : y \in f^{-i}(x) \setminus
  B\}$, which by the previous case is a finite union of finite sets.
\end{proof}

We note that in the case where $G$ is a locally countable Borel graph, the
same function $f$ is one-ended. However, the function $f$ will not be Borel
in general since while $B$ is analytic it may not be Borel. Nevertheless,
$f$ will be $\sigma(\bfSigma^1_1)$-measurable, where $\sigma(\bfSigma^1_1)$
is the $\sigma$-algebra generated by the analytic sets. Hence, the
function $f$ is Borel after discarding an appropriate null or meager set.

Iteratively applying Proposition \ref{prop:pointaway} can be used to find
one-ended Borel functions whose domains are $G$-invariant.

\begin{lemma}\label{lem:iterative_oes}
  Suppose $G$ is a locally finite Borel graph on $X$, and there is a
  decreasing sequence $A_0 \supset A_1 \supset \ldots$ of Borel sets
  with empty intersection such that $A_{i+1}$ meets each
  connected component of $G \restriction A_i$. Then there is a one-ended
  Borel function $f \from [A_0]_{E_G} \to [A_0]_{E_G}$ whose graph is contained in $G$.
\end{lemma}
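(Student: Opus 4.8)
The plan is to build $f$ by splicing together countably many applications of Proposition~\ref{prop:pointaway}, one for each ``layer'' $A_i \setminus A_{i+1}$, and then to verify one-endedness by a stratification argument. First I would pass to the case $X = [A_0]_\EG$, which is harmless since $G \restrict [A_0]_\EG$ is Borel, and I would set $A_{-1} := X$. With this convention every connected component of $G \restrict A_{-1} = G$ meets $A_0$ (each component contains a point of $A_0$ by definition of the saturation), so the hypothesis that $A_{i+1}$ meets every connected component of $G \restrict A_i$ now holds for all $i \geq -1$.

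For each $i \geq -1$ I would apply Proposition~\ref{prop:pointaway} to the Borel graph $G \restrict A_i$ and the Borel set $A_{i+1} \subset A_i$. Because $A_{i+1}$ meets every component of $G \restrict A_i$, its saturation $[A_{i+1}]_{E_{G \restrict A_i}}$ equals $A_i$, so the proposition yields a one-ended Borel function $f_i \from A_i \setminus A_{i+1} \to A_i$ whose graph is contained in $G \restrict A_i \subset G$. Since $\biginters_{i \geq 0} A_i = \emptyset$, the domains $A_i \setminus A_{i+1}$ for $i \geq -1$ partition $X$, so $f := \bigunion_{i \geq -1} f_i$ is a total Borel function $X \to X$ (a countable union of Borel partial functions with disjoint Borel domains) whose graph is contained in $G$; this settles everything except one-endedness.

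The crux---and the step I expect to require the most care---is that gluing one-ended pieces need not in general give a one-ended function, so I must rule out descending sequences that migrate between layers. To this end I would introduce the \emph{level} $\lambda(x) = i$ for $x \in A_i \setminus A_{i+1}$ and observe that each $f_i$ maps $A_i \setminus A_{i+1}$ into $A_i$, whence $\lambda(f(x)) \geq \lambda(x)$ for every $x$. Along any putative infinite descending sequence $x_0, x_1, \ldots$ with $f(x_{j+1}) = x_j$, the levels $\lambda(x_j)$ are therefore nonincreasing in $j$ and bounded below by $-1$, so they are eventually equal to some fixed $\ell$. But then a tail of the sequence lies entirely in $A_\ell \setminus A_{\ell+1} = \dom(f_\ell)$ and forms an infinite descending sequence of $f_\ell$, contradicting the one-endedness guaranteed by Proposition~\ref{prop:pointaway}. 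Hence $f$ has no infinite descending sequence and is one-ended, as required.
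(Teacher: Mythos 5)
Your proof is correct and takes essentially the same approach as the paper's: apply Proposition~\ref{prop:pointaway} once per layer $A_i \setminus A_{i+1}$ (using that $A_{i+1}$ saturates to all of $A_i$ in $G \restriction A_i$), take the union, and kill cross-layer descending sequences. The only differences are cosmetic---you add a layer $A_{-1} = [A_0]_{E_G}$ where the paper instead replaces $A_0$ by $[A_0]_{E_G}$, and your level-monotonicity argument is just the detailed justification of the paper's terse claim that any decreasing sequence in $f$ has a tail decreasing in a single $f_i$.
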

\begin{proof}
  We may assume $A_0$ is $G$-invariant by replacing $A_0$ with $[A_0]_{E_G}$.
  Apply Proposition~\ref{prop:pointaway} to find a one-ended
  Borel functions $f_i \from (A_i \setminus A_{i+1}) \to A_i$ whose graph
  is contained in $G$. Then let $f = \bigunion_i f_i$. The function $f$
  will be one-ended since any decreasing sequence in $f$ would contain a
  subsequence which is decreasing in some $f_i$.
\end{proof}

Indeed, the hypothesis of Lemma~\ref{lem:iterative_oes} is equivalent to
the existence of a Borel one-ended function whose graph is contained in
$G$; if $f$ is a
one-ended Borel function, then let $A_n = f^n[X]$.

Next, we use Lemma~\ref{lem:iterative_oes} to construct one-ended
Borel functions with conull domain in bounded degree acyclic Borel
graphs with an additional property that we call ampleness.

\begin{definition}\label{def:ample}
We say that a graph $G$ on $X$ is \define{ample} if every vertex has degree
at least $2$, and for all $x \in X$ every connected component of $G
\restriction (X \setminus \{x\})$ contains a vertex of degree at least $3$.
\end{definition}

Geometrically, an acyclic locally finite graph is ample if it contains no
isolated ends. Equivalently, an acyclic locally finite graph $G$ is ample
if it can be obtained from an acyclic graph with each vertex of degree at
least $3$ by ``subdividing'' each edge by adding some vertices of degree $2$.

\begin{lemma}\label{lem:ample}
  Suppose that $G$ is a bounded-degree acyclic Borel graph on a standard
  Borel space $X$. Suppose moreover that $G$ is ample. Let $\mu$ be a Borel
  probability measure on $X$. Then there is a $\mu$-conull Borel set $B$
  and a one-ended Borel function $f \from B \to X$ whose graph is contained
  in $G$.
\end{lemma}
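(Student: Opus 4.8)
The plan is to reduce the statement to an application of Lemma~\ref{lem:iterative_oes}. By the discussion in Section~\ref{sec:prelims} we may assume that $\mu$ is $G$-quasi-invariant, so that discarding a $\mu$-null set of vertices is harmless: any $\mu$-null Borel set is contained in a $\mu$-null $G$-invariant Borel set, which we are then free to delete. Thus it suffices to produce a decreasing sequence of Borel sets $A_0 \supseteq A_1 \supseteq \cdots$ such that each $A_{i+1}$ meets every connected component of $G \restriction A_i$, with $[A_0]_{E_G}$ $\mu$-conull and with the surviving core $N = \bigcap_i A_i$ being $\mu$-null. Indeed, after deleting the $G$-invariant $\mu$-null set $[N]_{E_G}$ we are left, on a conull $G$-invariant remainder, with a decreasing sequence of Borel sets with empty intersection each of which still meets every component of the corresponding restriction; Lemma~\ref{lem:iterative_oes} then furnishes a one-ended Borel function $f \from B \to X$ whose graph is contained in $G$ and whose domain $B$ is $\mu$-conull, as required.

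To build the $A_i$, the natural first attempt is to thin the graph one stage at a time while preserving the property that every component of $G \restriction A_i$ is infinite, since for such a restriction the complement of a maximal independent set automatically meets every component: an infinite component contains an edge, and both endpoints of an edge cannot lie in an independent set. Concretely, having arranged that $G \restriction A_i$ is ample (in particular of minimum degree $\geq 2$ with no isolated ends), I would use Lusin--Novikov uniformization to select a Borel set $A_{i+1} \subseteq A_i$ by removing a suitably separated maximal independent subset of $A_i$, chosen so that $G \restriction A_{i+1}$ remains ample. Ampleness is exactly the hypothesis meant to make this possible: the branching it provides -- every component minus a vertex still contains a vertex of degree $\geq 3$ -- is what should prevent the thinning from fragmenting a component into finite pieces, and it is precisely the feature whose absence (e.g.\ in a bi-infinite line) would doom the construction.

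The heart of the argument is to show that the core $N = \bigcap_i A_i$ is $\mu$-null, and I expect this to be the \emph{main obstacle}. The appealing but naive justification -- that bounded degree forces each stage to delete a uniformly positive proportion of the remaining $\mu$-mass, so that $\mu(A_i) \to 0$ -- cannot be made to coexist cleanly with the structural requirement that every component of $G \restriction A_i$ remain infinite. Removing an independent set from a highly branching forest can split components into finite pieces, and in the extreme minimum-degree-three case, such as the $3$-regular tree, no independent-set thinning shrinks the graph at all; this is exactly the phenomenon that makes a purely Borel construction impossible (cf.\ Marks's example referenced in the introduction) and that forces the use of the measure $\mu$. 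I therefore expect the genuine resolution to be a mass-transport argument rather than a crude mass count: one shows that if a positive-measure set of vertices survived into every $A_i$, then these vertices would carry infinite backward orbits under the would-be one-ended function, and transporting unit mass along such orbits (using $G$-quasi-invariance and boundedness of degree) would move infinite mass into a set of finite total measure, a contradiction. Ampleness again supplies the branching that drives this transport.

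To carry this out I would choose the removed sets not merely to be maximal independent but to respect ampleness directly -- contracting the finite degree-two ``subdivision'' paths and pruning only branch vertices in a way that leaves each surviving component with a vertex of degree $\geq 3$, hence infinite -- and then verify by the mass-transport computation above that the set of vertices lying in every $A_i$ is $\mu$-null. Once the core is shown to be null, the reduction in the first paragraph, combined with the fact from Proposition~\ref{prop:pointaway} that pointing away from a complete Borel set yields a one-ended function with finite backward orbits, completes the proof.
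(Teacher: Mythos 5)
Your overall skeleton is the same as the paper's: iterate a Borel ``thinning'' step that preserves ampleness and meets every component, arrange that the core $N=\bigcap_i A_i$ is $\mu$-null, discard its ($\mu$-quasi-invariant) saturation, and apply Lemma~\ref{lem:iterative_oes}. The genuine gap is in the step you yourself flag as the main obstacle: you assert that the ``naive'' argument---each stage deletes a uniformly positive proportion of the remaining mass---cannot be made to work, and you replace it with a mass-transport sketch. That sketch is not a proof: it is circular, since it appeals to ``infinite backward orbits under the would-be one-ended function'' on the core, but on the core no one-ended function has been constructed (that is precisely what fails there); and mass transport needs measure-preservation or a unimodularity-type hypothesis, whereas here $\mu$ is an arbitrary Borel probability measure of which only quasi-invariance can be arranged. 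Your side claim that in the $3$-regular tree ``no independent-set thinning shrinks the graph at all'' is also false as stated; what is true is that a purely \emph{Borel} construction cannot force $\bigcap_i A_i=\emptyset$, but that is an objection to emptiness, not to nullity.

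In fact the uniform-proportion argument is exactly what the paper runs, and the idea you are missing is \emph{where} to run it: on the branch vertices, via a pushforward measure. Let $X'=\{x: \deg_G(x)\geq 3\}$, let $G'$ be the graph on $X'$ joining branch vertices whose connecting $G$-path contains no other point of $X'$, let $\pi\from X\to X'$ be a Borel nearest-point projection, and let $\nu=\pi_*\mu$. Color the distance-$\leq 3$ power graph $H$ of $G'$ with $d^3$ colors (\cite{KST}*{Proposition 4.6}, since $H$ has degree $\leq d^3-1$) and choose a color class $C'\subset X'$ with $\nu(C')\geq d^{-3}$; then fatten $C'$ to $C$ by adjoining the degree-two corridors hanging off it, so that $\pi^{-1}(C')\subset C$ and hence $\mu(C)\geq d^{-3}$. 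Removing $C$ deletes a set that is very sparse among branch vertices (pairwise $G'$-distance $\geq 4$), which is what guarantees the remainder $A=X\setminus C$ still meets every component and is still ample---note $C$ is nothing like a \emph{maximal} independent set, whose removal could indeed fragment components, as you worried. The pushforward is the point: even if almost all of the $\mu$-mass sits on degree-two vertices, the corridor attached to each chosen branch vertex is swept into $C$, so a definite fraction of $\mu$ is removed at every stage, giving $\mu(A_i)\leq (1-d^{-3})^i\to 0$ and hence $\mu(N)=0$ by countable iteration rather than by any transport argument.
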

\begin{proof}
  Fix $d$ bounding the degree of vertices of $G$, so $d \geq 3$.
  The heart of the construction rests in the following claim.

  \begin{claim}\label{claim:subample}
  There is a Borel subset $A \subset X$ meeting each connected component of $G$ and with $\mu(A) \leq 1 - d^{-3}$, such that $G \restriction A$ is ample.
  \end{claim}
  \begin{proof}[Proof of the claim]
    Let $X' = \{x \in X : \deg_G(x) \geq 3\}$ and define an auxiliary graph
    $G'$ on $X'$ by putting $x \G' y$ if $x \mathrel{\EG} y$ and the unique
    $G$-path from $x$ to $y$ contains no other points of $X'$.  Define a
    Borel map $\pi \from X \to X'$ selecting for each $x$ a closest element
    of $X'$ with respect to the graph metric on $G$. Let $\nu = \pi_* \mu$
    be the pushforward measure of $\mu$ on $X'$ under $\pi$, so $\nu(B) =
    \mu(\pi^{-1}(B))$ for all Borel $B$.

    Finally, let $H$ be the distance $\leq 3$ graph associated with $G'$,
    so two distinct points of $X'$ are $H$ related if they are connected by
    a $G'$ path of length at most $3$. Now $H$ has degree bounded by
    $d^3-1$, and hence by~\cite{KST}*{Proposition 4.6} a Borel coloring in
    $d^3$ colors.  Consequently, there is an $H$-independent Borel set $C'
    \subset X'$ with $\nu(C') \geq d^{-3}$.

    Define $C \subset X$ by $x \in C$ if $x \in C'$ or $x \in X \setminus
    X'$ and can be connected to a point in $C$ without using any other
    points of $X'$.  Note that $\pi^{-1}(C') \subset C$, so in particular $\mu(C) \geq d^{-3}$.  We then set $A = X \setminus C$, and check that $A$ satisfies the conclusion of the claim.

    The $G'$-independence of $C$ (in conjunction with the ampleness of $G$)
    implies that $A$ meets each $G$-component. The only thing remaining to
    check is that $G \restriction A$ is ample. Note that the only way a
    vertex $x$ in $X'$ can have ($G \restriction A$)-degree less than three
    is if it is $G'$-adjacent to an element of $C$.  So the fact that
    distinct points of $C$ have $G'$ distance at least four implies that
    $x$ has two $G'$ neighbors in $X'$ whose ($G \restriction A$)-degree remains $3$.  In particular, the degree of $x$ is two.  Moreover, if $x$ were used to witness the ampleness condition of one of its neighbors, the condition can be witnessed instead by the other neighbor.  So $G \restriction A$ is ample and the claim is proved.
  \end{proof}

  By iterating the claim, we may build a decreasing sequence $A_0 \supset
  A_1 \supset \ldots$
  of Borel sets so that $A_0 = X$, $A_{i+1}$ meets each component of
  $G \restriction A_i$, and $\mu\left(\biginters_i A_i \right)=0$. Since we
  may assume that $G$ is $\mu$-quasi-invariant, after discarding the
  $\mu$-null saturation of $\biginters_i A_i$, we can then apply
  Lemma~\ref{lem:iterative_oes}.
\end{proof}

The same idea works in the context of Baire category, even in the more
general context where $G$ is not bounded degree.

\begin{lemma}\label{lem:ample_bc}
  Suppose that $G$ is a locally finite acyclic Borel graph on a standard
  Borel space $X$. Suppose moreover that $Y$ is a Borel set and $G
  \restriction Y$ is ample. Let $\tau$ be a Polish topology compatible with
  $X$. Then there is a $G$-invariant Borel set $B \subset Y$ such that $Y
  \setminus B$ is $\tau$-meager and a
  one-ended Borel function $f \from B \to X$ whose
  graph is contained in $G$.
\end{lemma}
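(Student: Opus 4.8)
The plan is to mirror the measure-theoretic argument of Lemma~\ref{lem:ample} but replace the measure-shrinking bookkeeping with a category-theoretic counterpart, exploiting the fact that in the Baire category setting I can afford to be wasteful and work one $G$-invariant piece at a time. The goal is again to verify the hypothesis of Lemma~\ref{lem:iterative_oes}: I want to produce a decreasing sequence $Y = A_0 \supset A_1 \supset \cdots$ of Borel sets with $\bigcap_i A_i$ meager, where each $A_{i+1}$ meets every connected component of $G \restriction A_i$ and each $G \restriction A_i$ remains ample. The key step is thus a category analogue of Claim~\ref{claim:subample}: given an ample Borel $Y$, I should find a Borel $A \subset Y$ meeting every $G$-component with $G \restriction A$ ample and with $Y \setminus A$ somewhere nonmeager in a controlled way.

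First I would reprove the structural part of the claim verbatim: set $X' = \{x \in Y : \deg_{G \restriction Y}(x) \geq 3\}$, form the auxiliary graph $G'$ on $X'$ connecting points whose unique $G$-path avoids other points of $X'$, and let $H$ be the distance-$\leq 3$ graph of $G'$, which has bounded degree and hence (by~\cite{KST}*{Proposition 4.6}) a Borel proper coloring with finitely many colors. The combinatorial verification that removing an $H$-independent set's ``saturation'' $C$ (together with the degree-$2$ chains hanging off it) leaves an ample graph meeting every component is identical to the measure case, since it used only the acyclicity and ampleness of $G$, never the measure. What changes is the selection of the independent set $C'$: instead of picking a color class of large $\nu$-measure, I would use the fact that one of the finitely many $H$-color classes must be nonmeager, and more importantly iterate a countable Baire-category exhaustion.

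The main obstacle, and the reason the statement is phrased with a $G$-invariant $B$ with $Y \setminus B$ meager rather than a single shrinking step, is ensuring the \emph{intersection} $\bigcap_i A_i$ is meager while each stage removes only ``a little''. In the measure setting one removes a definite fraction $d^{-3}$ of the mass at each stage, forcing the intersection to be null. There is no direct analogue of ``removing a fixed proportion'' for category, so I would instead run the construction locally over a countable basis: fixing a countable family of open sets witnessing non-meagerness, I would arrange at stage $i$ that the removed set $A_i \setminus A_{i+1}$ is comeager in a prescribed basic open set, cycling through all basic open sets so that every point lands in infinitely many removed stages and hence $\bigcap_i A_i$ is meager. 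Concretely, because each $G$-component of $G \restriction A_i$ meets $A_{i+1}$, one can use a Baire-category genericity argument to discard, at each stage, a set comeager in the targeted region; after passing to $G$-invariant sets and using $G$-quasi-invariance of $\tau$ so that the discarded meager set may be taken $G$-invariant, the union over all stages of what is discarded will be meager.

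Once the sequence $A_i$ is in place with $\bigcap_i A_i$ meager, I would pass to the $G$-invariant Borel set $B$ obtained by removing (the $G$-saturation of) this meager set, which remains comeager in $Y$ by quasi-invariance of $\tau$. Restricting the whole construction to $B$, the sets $A_i \cap B$ now have empty intersection and satisfy the hypotheses of Lemma~\ref{lem:iterative_oes}, which directly yields a one-ended Borel function $f \from B \to X$ whose graph is contained in $G$. The bounded-degree assumption is not needed here: since $H$ only needs a Borel coloring with countably many colors and $G$ is locally finite, local finiteness suffices to run the same argument, which is exactly the added generality the lemma advertises over Lemma~\ref{lem:ample}.
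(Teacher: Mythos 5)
Your overall architecture is the paper's: keep the structural construction from the Claim inside the proof of Lemma~\ref{lem:ample} verbatim, localize the shrinking step over a countable basis $\{U_k\}$, conclude that $\bigcap_i A_i$ is meager, discard its (meager, by quasi-invariance) saturation, and apply Lemma~\ref{lem:iterative_oes}. But your per-stage requirement --- that the removed set $A_i \setminus A_{i+1}$ be \emph{comeager} in a prescribed basic open set --- is a genuine gap: it is both stronger than what the construction can deliver and, in general, impossible to satisfy at all. The construction removes (the chain-saturation of) a \emph{single} color class $C'$ out of countably many classes whose $\pi$-preimages partition $A_i$; pigeonhole guarantees some class has non-meager trace on $U_k$, but never comeager trace (if two classes have non-meager trace on $U_k$, neither is comeager there), and if $A_i \cap U_k$ fails to be comeager in $U_k$ (e.g.\ $Y$ nowhere dense, or $U_k$ already targeted earlier), then \emph{no} subset of $A_i$ whatsoever is comeager in $U_k$. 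Worse, the requirement conflicts with the invariants you must preserve, so no alternative construction can rescue it. Take $\Gamma = (\Z/2\Z)^{*3}$, let $X$ be the free part of the shift on $2^\Gamma$, $G$ the shift graph (acyclic, $3$-regular, hence ample), $Y = X$, and $U = \{x : x(1_\Gamma) = 0\}$. If $C$ is Borel and comeager in $U$, then $\bigcap_{\gamma \in \Gamma} \gamma^{-1}\cdot\bigl(C \cup (X \setminus U)\bigr)$ is comeager, so for comeager many $x$ every point $y \in [x]_{E_G}$ with $y(1_\Gamma) = 0$ lies in $C$; thus $A \cap [x]_{E_G}$ lies in the $1$-labeled vertices of the component. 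But for comeager many $x$ the labeling $\gamma \mapsto x(\gamma^{-1})$ of the Cayley tree has all-$0$ spheres of infinitely many radii around every group element, so its $1$-labeled part contains no infinite path and hence no subgraph of minimum degree $2$. Since ampleness of $G \restriction A$ forces every vertex of $A \cap [x]_{E_G}$ to have two neighbors there, $A \cap [x]_{E_G}$ must be empty on comeager many components, contradicting that $A$ meets every component. So removing a set comeager in this $U$ is flatly incompatible with keeping an ample, component-meeting remainder.

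The repair is exactly what the paper does, and it is weaker in precisely the right way: at the stage devoted to $U_i$, choose the color class $C'$ so that $U_i \cap \pi^{-1}(C')$ is merely \emph{non-meager} (possible by pigeonhole over the countably many classes whenever $U_i \cap A_i$ is non-meager; if $U_i \cap A_i$ is meager, do nothing, as $U_i \setminus A_i$ is then already non-meager). This guarantees $U_i \setminus A_{i+1}$ is non-meager, and that suffices: $\bigcap_i A_i$ is Borel, so if it were non-meager it would, by the Baire property, be comeager in some basic $U_k$, contradicting that $U_k \setminus \bigcap_i A_i \supset U_k \setminus A_{k+1}$ is non-meager. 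Two smaller slips. First, your statement that $H$ ``has bounded degree and hence a Borel proper coloring with finitely many colors'' contradicts your own (correct) closing remark: here $G$ is only locally finite, so $H$ is merely locally finite and one takes a Borel $\N$-coloring; this is exactly why only non-meagerness --- and not a fixed proportion, nor comeagerness --- can be extracted from one class, and it is why the category argument tolerates unbounded degree while the measure argument does not. Second, ``every point lands in infinitely many removed stages'' cannot happen: the $A_i$ decrease, so each point is removed at most once; the correct bookkeeping is not that points are removed often, but that the intersection ends up non-comeager in every basic open set.
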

\begin{proof}
  The proof is very similar to
  Lemma~\ref{lem:ample} above.
  The
  statement to prove in place of the above claim is the following: $(*)$
  For any non-empty $\tau$-open set $U$ there is a Borel subset $A\subset
  Y$ meeting each connected component of $G \restriction Y$ and with $U \setminus A$
  non-meager, such that $G\restriction A$ is ample. The proof of $(*)$ is
  the same as the proof of the claim except that we choose the
  $H$-independent Borel set $C\subset X_3$ with $U\inters \pi ^{-1}(C)$
  non-meager. Then we fix a countable base $\{U_k\} _{k\in \N}$ of open
  sets for $\tau$ and, as in part \emph{1.}, we iteratively apply $(*)$ to
  build a decreasing sequence $(A_i)_{i \in \N}$ of Borel sets so that $A_0
  = X$, $A_{i+1}$ meets each component of $G \restriction A_i$, and with
  $U_i\setminus A_i$ non-meager. It follows that $U_k\setminus \biginters
  _i A_i$ is non-meager for all $k\in \N$, and therefore $\biginters _i
  A_i$ is meager. The rest of the proof is as before.
\end{proof}

Next, we show that we can reduce the problem of proving
Theorem~\ref{thm:general_one_end} to the case of ample graphs.
\begin{lemma}\label{lem:reduce_to_ample}
  Suppose $G$ is a locally finite acyclic Borel graph on a standard Borel
  space $X$ and no connected component of $G$ has $0$ or $2$ ends. Then
  there is a Borel set $B$ such that $G \restriction B$ is ample, and there
  is a one-ended Borel function $f \from (X \setminus [B]_{E_G}) \to (X
  \setminus [B]_{E_G})$ contained in the graph of $G \restriction (X
  \setminus [B]_{E_G})$.
\end{lemma}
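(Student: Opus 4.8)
The plan is to define $B$ as an ``ample core'' of $G$ via a coinductive notion of thickness, show $G \restriction B$ is ample, and then build the one-ended function on the complementary (thin) components by pushing every vertex out toward an end. For a directed edge $(u,v)$ (with $u \mathrel{G} v$) write $T_{u \to v}$ for the connected component of $v$ in $G \restriction (X \setminus \{u\})$, i.e.\ the subtree hanging off $v$ away from $u$. I would declare a set $\Theta$ of directed edges \emph{thick} by taking the largest fixed point of the monotone operator sending a set $S$ of directed edges to those $(u,v)$ such that $v$ has at least two outgoing edges $(v,w)$ with $w \neq u$ and $(v,w) \in S$. Concretely, set $\Theta_0$ to be all directed edges and $\Theta_{n+1} = \{(u,v) : v \text{ has at least two onward edges in } \Theta_n\}$; because $G$ is locally finite a pigeonhole argument shows $\biginters_n \Theta_n$ is already a fixed point, so $\Theta = \biginters_n \Theta_n$ is Borel. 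Define $B = \{x : x \text{ has at least two neighbors } y \text{ with } (x,y) \in \Theta\}$, a Borel set, and take this as the witness for part one.

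For part one (that $G \restriction B$ is ample) the key observation is that thickness of $(x,y)$ forces $y$ to have two onward thick edges $(y,w_1),(y,w_2)$ with $w_i \neq x$; hence $w_1, w_2 \in B$ and in particular $y \in B$. This immediately gives that every $x \in B$ has at least two $B$-neighbors, so $G \restriction B$ has minimum degree $2$. For the second ampleness requirement, fix $x \in B$ and a component $C$ of $G \restriction (B \setminus \{x\})$; it is generated by some $B$-neighbor $y$ of $x$. Since $y \in B$ it has at least two thick directions, at most one of which points back at $x$, so $y$ has a thick direction $(y,w)$ into $C$; unwinding the definition at $w$ shows that the far endpoint $w \in C$ has three $B$-neighbors, witnessing $(G \restriction B)$-degree $\geq 3$ inside $C$. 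This bookkeeping is the step that requires the most care.

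It remains to treat $X \setminus [B]_{E_G}$, which by the definition of $\Theta$ is a union of components $K$ containing no thick edge, and hence (contrapositive of the previous paragraph) no ample subgraph; call these \emph{thin}. The structural input I would establish is that a thin component with at least three ends necessarily contains an infinite bare ray, meaning a ray all of whose vertices have degree $\leq 2$: otherwise, trimming its finite bushes would leave a graph of minimum degree $2$ with no isolated ends, hence an ample subgraph. Consequently each thin $K$ is either one-ended or contains an isolated end. On the one-ended components I point each vertex toward its unique end; since deleting a vertex of a one-ended tree leaves exactly one infinite piece, every backward orbit is a finite subtree and this function is Borel and one-ended. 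On the components meeting the Borel set $U = \{x : \deg_G(x) \leq 2 \text{ and } x \text{ lies on an infinite bare ray}\}$, I orient each maximal bare ray outward toward its isolated end (each such $x$, having no second infinite bare tail as $G$ has no two-ended component, has a unique outward neighbor, so this is Borel and one-ended on $U$), and on $[U]_{E_G} \setminus U$ I invoke Proposition~\ref{prop:pointaway} with $A = U$ to obtain a one-ended Borel function sending escaping vertices out toward the remaining non-isolated ends and trapped vertices in toward $U$.

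Finally I would check that the union of these functions is one-ended. The only subtlety is the interface between $U$ and $[U]_{E_G} \setminus U$: since the bare-ray orientation always maps $U$ into $U$, no descending sequence can pass from $U$ back into the pointaway region, so the tail of any infinite descending sequence would lie entirely in the pointaway region and be an infinite descending sequence of the function from Proposition~\ref{prop:pointaway}, a contradiction. The main obstacle I anticipate is precisely this last coordination, namely producing a single Borel one-ended function on all thin components simultaneously and ruling out infinite descending sequences across the bare-ray/pointaway boundary, together with verifying the Borelness of $\Theta$ and the ampleness of $G \restriction B$.
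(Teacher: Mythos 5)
There is a genuine gap, and it sits exactly where you hoped it would not: in the definition of $\Theta$ and the downstream claim about thin components. Your thickness operator demands branching at \emph{every} step: $(u,v) \in \Theta_{n+1}$ requires two onward edges of $v$ itself to lie in $\Theta_n$. But ampleness tolerates subdivision --- as the paper notes, an acyclic locally finite graph is ample exactly when it is obtained from an acyclic graph of minimum degree $3$ by subdividing edges with degree-$2$ vertices. Take $G$ to be a Borel graph all of whose components are the tree obtained from the $3$-regular tree by subdividing every edge once. Any directed edge pointing at a degree-$2$ vertex has only one onward edge, so it fails $\Theta_1$; any directed edge pointing at a degree-$3$ vertex has all of its onward edges pointing at degree-$2$ vertices, so it fails $\Theta_2$. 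Hence $\Theta_2 = \emptyset$, the largest fixed point of your operator is empty, and $B = \emptyset$ --- even though $G$ itself is ample and has no components with $0$ or $2$ ends. This also refutes the implication you lean on (``ample subgraph implies a thick edge''), so your structural claim that a thin component with at least three ends contains an infinite bare ray is false: this component is thin in your sense, has infinitely many ends, and contains no bare ray, since every ray passes through degree-$3$ vertices infinitely often.

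As a result your construction produces nothing on $X \setminus [B]_{E_G} = X$: the components are not one-ended and your set $U$ is empty, so neither the ``point toward the unique end'' step nor Proposition~\ref{prop:pointaway} applied to $U$ ever fires. Nor can this be patched on the thin side: components like the subdivided $3$-regular tree are precisely the hard case the lemma must place \emph{inside} $B$, because (cf.\ the discussion after Corollary~\ref{cor:oneend} together with Marks's theorem) such Borel graphs need not admit any Borel one-ended function at all. The repair is to make thickness an \emph{eventual}-branching notion rather than an immediate one, which is what the paper does: it takes $A$ to be the set of $x$ having two disjoint rays beginning at neighbors of $x$ (Borel by K\"onig's lemma), so that $G \restriction A$ has minimum degree $2$ and every component of $G\restriction A$ has at least $3$ ends; it handles the one-ended components of $G$ directly by pointing each vertex toward the unique end; it strips the isolated ends of $G \restriction A$ using the decreasing Borel sets $A_n$ (points lying on isolated ends at distance at least $n$ from every vertex of $(G \restriction A)$-degree $\geq 3$) via Lemma~\ref{lem:iterative_oes}; and it finally sets $B = A \setminus [A_0]_{E_G}$. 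Your handling of one-ended components and your descending-sequence bookkeeping at the interface do mirror the paper's final steps, but they only become available once the core $B$ is defined by eventual rather than immediate branching.
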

\begin{proof}
  Let $A$ be the set of $x \in X$ such that there are disjoint
  rays  $(y_i)_{i \in \N}$ and $(z_i)_{i \in \N}$ such that $y_0$ and $z_0$
  are neighbors of $x$. Note that $A$ is Borel by K\"onig's lemma.
  Now every vertex in the induced subgraph $G \restriction A$ has degree
  at least $2$. Furthermore, every connected component of $G \restriction
  A$ has at least $3$ ends, since $A$ does not meet any connected component
  of $G$ with $1$ end, $G$ contains no connected components with $0$ or
  $2$ ends.

  Consider the set $X \setminus [A]_{E_G}$ of connected components that do
  not contain any element of $A$. This is the set of connected components
  of $G$ that each have $1$ end. Clearly $G \restriction X \setminus
  [A]_{E_G}$ has a Borel one-ended subforest on
  this set: map each $x$ to the unique neighbor $y$ such that there is an
  injective $G$-ray $(z_i)$ with $z_0 = x$ and $z_1 = y$.

  For each $i \in \N$, let $A_n$ be the set of $x \in A$ such that $x$ has
  distance at least $n$ from every vertex $y$ with $\deg_{G \restriction
  A}(y) \geq 3$, and $x$ is contained in an isolated end of $G \restriction
  A$. Here by $x$ being contained in an isolated end, we mean that there is
  an injective $(G \restriction A)$-ray $(x_i)_{i \in \N}$ such that $x =
  x_0$, and every $x_i$ has degree $2$. Note that each $A_n$ is a Borel
  set. Now applying
  Lemma~\ref{lem:iterative_oes} to the sequence $A_0 \supset A_1 \supset
  \ldots$ we can find a one-ended Borel function of $G \restriction
  [A_0]_{E_G}$.

  Let $B = A \setminus [A_0]_{E_G}$. Then clearly $G \restriction (X
  \setminus [B]_{E_G})$ has a Borel one-ended spanning subforest (since $G
  \restriction (X \setminus [A]_{E_G})$ and $G \restriction [A_0]_{E_G}$
  both do). Furthermore, $G \restriction B$ is ample, since every vertex of
  $G \restriction A$ has degree at least $2$, and $G \restriction B$ has no
  isolated ends by the definition of $A_0$.
\end{proof}

We note that Lemma~\ref{lem:reduce_to_ample} generalizes to locally
countable graphs, but where $B$ is analytic, and the one-ended subforest of
$G \restriction (X \setminus [B]_{E_G})$ will be
$\sigma(\bfSigma^1_1)$-measurable.

We are now ready to prove Theorem~\ref{thm:general_one_end}.
\begin{theorem}
  Suppose that $G$ is a locally finite acyclic Borel graph on a standard
  Borel space $X$, such that no connected component of $G$ has $0$ or $2$
  ends.
  \begin{enumerate}
   \item Let $\mu$ be a Borel probability measure on $X$. Then there is a
   $\mu$-conull Borel set $B$ and a one-ended Borel function $f \from B \to
   X$ whose graph is contained in $G$.

   \item Let $\tau$ be a compatible Polish topology on $X$. Then there is a
   $\tau$-comeager Borel set $B$ and a one-ended Borel function $f \from B
   \to X$ whose graph is contained in $G$.
   \end{enumerate}
\end{theorem}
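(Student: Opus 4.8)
The plan is to reduce to the ample case already handled by Lemmas~\ref{lem:ample} and~\ref{lem:ample_bc}, using Lemma~\ref{lem:reduce_to_ample} to peel off the part of the graph that is not ample. First I would apply Lemma~\ref{lem:reduce_to_ample} to obtain a Borel set $B$ with $G \restriction B$ ample together with a one-ended Borel function $f_0$ defined on the $G$-invariant set $X \setminus [B]_{E_G}$ and contained in $G$. Since $X \setminus [B]_{E_G}$ and $[B]_{E_G}$ are disjoint $G$-invariant sets with no edges of $G$ between them, a one-ended Borel function on $[B]_{E_G}$ (defined off a null, resp.\ meager, set) can be combined with $f_0$ without creating any new infinite descending sequence, because every descending sequence lies entirely within one of the two invariant pieces. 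Thus it suffices to produce the desired one-ended function on $[B]_{E_G}$.

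On $[B]_{E_G}$ I would glue together two functions. First, apply Proposition~\ref{prop:pointaway} with $A = B$ to obtain a one-ended Borel function $g \from [B]_{E_G} \setminus B \to [B]_{E_G}$ contained in $G$. Second, treat $G \restriction B$ as a graph in its own right and apply the ample lemma to it: for part~(2), Lemma~\ref{lem:ample_bc} (with $Y = B$) yields a one-ended Borel function $h$ on a subset of $B$ whose complement in $B$ is $\tau$-meager, while for part~(1), Lemma~\ref{lem:ample} yields such an $h$ on a $\mu$-conull subset of $B$; in both cases $h$ maps into $B$. The union $g \cup h$ is then defined off a null, resp.\ meager, subset of $[B]_{E_G}$, namely the part of $B$ discarded by the ample lemma.

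The key verification is that $g \cup h$ is one-ended. I would argue that an infinite descending sequence $x_0, x_1, \ldots$ (with $(g\cup h)(x_{i+1}) = x_i$) is impossible: since $h$ takes values only in $B$ while $g$ is defined only on $[B]_{E_G} \setminus B$, if some term $x_i$ lies outside $B$ then $x_i$ must be a value of $g$, forcing $x_{i+1} \in \dom(g) = [B]_{E_G}\setminus B$, and by induction every later term lies outside $B$, producing an infinite descending sequence for $g$ and contradicting Proposition~\ref{prop:pointaway}; in the remaining case every term lies in $B$, so every term is a value of $h$ and we get an infinite descending sequence for $h$, contradicting the ample lemma. Hence $f_0 \cup g \cup h$ is the required one-ended Borel function, and its domain is $\mu$-conull (resp.\ $\tau$-comeager) since its complement is the discarded null (resp.\ meager) set.

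The main obstacle is in part~(1): Lemma~\ref{lem:ample} is proved only for bounded-degree graphs, because the quantitative estimate $\mu(A) \le 1 - d^{-3}$ that forces $\mu\bigl(\bigcap_i A_i\bigr) = 0$ in the iterated application of Lemma~\ref{lem:iterative_oes} relies on a degree bound, whereas here $G$ — and hence the ample core $G \restriction B$ — is only locally finite. In the bounded-degree case (which is what the application to Theorem~\ref{thm:mb} requires) this step is immediate, so I would record that case first. For general locally finite $G$ the remedy is a preliminary reduction to bounded degree: local finiteness gives $\mu(\{x : \deg_G(x) > n\}) \to 0$, and one must arrange the exhaustion of $[B]_{E_G}$ so that $\bigcap_i A_i$ is still null even though the fraction of measure removed at each stage is no longer uniformly bounded below. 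The category case avoids this difficulty entirely, since Lemma~\ref{lem:ample_bc} is already stated for locally finite graphs.
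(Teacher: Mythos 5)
Your overall architecture is sound, and for part (2) it is precisely the paper's proof: Lemma~\ref{lem:reduce_to_ample} to split off an ample core, Lemma~\ref{lem:ample_bc} on that core, and Proposition~\ref{prop:pointaway} to pull the rest of each component toward it; your explicit verification that the glued function $f_0 \cup g \cup h$ has no infinite descending sequence is correct (the paper leaves this routine check implicit). The same is true of part (1) under the extra assumption of bounded degree, where Lemma~\ref{lem:ample} applies; and you are right that this bounded-degree case is all that the proof of Theorem~\ref{thm:mb} actually consumes, since there the block-intersection graph fed into the theorem has bounded degree.

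The genuine gap is the one you flag and then only gesture at: part (1) for locally finite $G$ of \emph{unbounded} degree, which the theorem as stated covers. Your proposed remedy --- ``a preliminary reduction to bounded degree'' via $\mu(\{x : \deg_G(x) > n\}) \to 0$, arranging the exhaustion so that $\biginters_i A_i$ is still null --- is a restatement of the difficulty, not a solution: deleting high-degree vertices destroys ampleness and the hypothesis on ends of the remaining components, and without a uniform lower bound on the measure removed at each stage the iteration driving Lemma~\ref{lem:ample} simply does not terminate in measure. The paper's argument for this case is different in kind and substantially longer. It takes a Borel edge coloring of $G$ with countably many colors (\cite{KST}*{Proposition 4.10}), writing $G$ as an increasing union of bounded-degree Borel graphs $G_n$; on the set $A_n$ of points whose $G_n$-component has neither $0$ nor $2$ ends it applies the bounded-degree case together with Proposition~\ref{prop:pointaway}; and on the complementary part it observes that each $G_n$ is $\mu$-hyperfinite there (components with $0$ ends are finite, those with $2$ ends are hyperfinite by \cite{JKL}*{Lemma 3.20}), so the increasing union is $\mu$-hyperfinite by \cite{KM}*{Theorem 6.11}. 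Adams's theorem (\cite{JKL}*{Lemma 3.21}) then gives, modulo a nullset, a Borel selection of one or two ends per component; the two-end part is handled by exhausting by distance from the distinguished line and applying Lemma~\ref{lem:iterative_oes}, and the one-end part by taking the end-pointing function $g$, a Lusin--Novikov section $h$ with $g(h(x)) = x$, and noting that $h$ is one-ended except on components where the set $\{x : h(g(x)) = x\}$ forms a line, which is treated like the two-end case. None of these ingredients (edge-coloring decomposition, hyperfiniteness, Borel end selection) appears in your proposal, so as written your proof establishes part (1) only for bounded-degree graphs.
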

\begin{proof}
  \emph{2} follows directly from Lemma~\ref{lem:reduce_to_ample} and then
  Lemma~\ref{lem:ample_bc} and Proposition~\ref{prop:pointaway}.

  We prove \emph{1}. By Lemma~\ref{lem:reduce_to_ample}
  we may assume that $G$ is ample. By Lemma~\ref{lem:ample} and
  Proposition~\ref{prop:pointaway} the theorem is true when
  $G$ has bounded degree.

   By
  \cite{KST}*{Proposition 4.10}, we may find a Borel edge coloring of $G$
  with $\N$ colors. Let $G_n$ be the subgraph of $G$ consisting of all
  edges assigned a color $\leq n$ so that $G_n$ is a bounded degree Borel
  graph. Let $A_n = \{x \in X : \text{$G_n \restriction [x]_{E_{G_n}}$ does not
  have $0$ or $2$ ends}\}$. Then the graph $G_n \restriction A_n$ is bounded
  degree, and so we can find a Borel one-ended subforest of each $G_n
  \restriction A_n$ modulo a nullset, by our observation about bounded
  degree graphs above.

  Thus, modulo a nullset, we can find a Borel one-ended subforest of each
  $G \restriction [A_n]_{E_G}$ via Proposition~\ref{prop:pointaway}, and
  hence a Borel one-ended subforest of $G \restriction [\bigunion_n
  A_n]_{E_G}$.

  So we need only need to construct our one-ended subforest on the graph $G \restriction (A \setminus [\bigunion_n
  A_n]_{E_G})$. Now each $G_n \restriction (X \setminus
  A_n)$ is either $0$ or $2$-ended. Thus, each $G_n \restriction (X \setminus
  A_n)$ is $\mu$-hyperfinite since connected components with $0$ ends are
  finite, and for those with $2$ ends we can apply \cite{JKL}*{Lemma 3.20}.
  Hence $G
  \restriction (A \setminus [\bigunion_n A_n]_{E_G})$ is an increasing union of
  $\mu$-hyperfinite graphs and is hence also $\mu$-hyperfinite by
  \cite{KM}*{Theorem 6.11}. Thus, by a result of Adams, \cite{JKL}*{Lemma 3.21}, there is a Borel assignment of one or two ends to each
  equivalence class of the graph $G \restriction (A \setminus [\bigunion_n
  A_n]_{E_G})$, modulo a nullset. Let $C_1$ be set where
  there is a Borel assignment of one end, and $C_2$ be the
  subset where there is a Borel assignment of two ends.

  Since $G$ is acyclic, any two ends in a connected component of $G$ are
  jointed by a unique line. Let $B_n \subset C_2$ be the set of points of
  distance at least $n$ from this distinguished line. Since $G \restriction
  C_2$ is acyclic, each point has degree at least $2$, and each connected
  component does not have $2$ ends, $B_{n+1}$ meets each connected
  component of $G \restriction B_{n}$ and hence we can apply
  Lemma~\ref{lem:iterative_oes} to the sequence $B_0
  \supset B_1$ to find a
  one-ended function contained in the graph of $G \restriction C_2$.

  On the set $C_1$, there is a Borel function $g \from C_1 \to C_1$
  generating $G \restriction C_1$ corresponding to the unique choice of
  end. In particular, for every $x,y \in C$ there are $n,m \in N$ such that
  $f^n(x) = f^m(y)$. By Lusin-Novikov uniformization~\cite{K}*{Theorem
  18.10}, we can find a Borel function $h \from C_1 \to C_1$ such that
  for every $x \in X$, $g(h(x)) = x$. Consider the set $C^* \subset C_1$ of
  connected components in $G \restriction C_1$ on which $h$ is not
  one-ended. Then the set of $x$ such that $h(g(x)) = x$ forms a unique
  bi-infinite line in $C^*$, and hence we can find a one-ended Borel
  function on $C^*$ as we did above on the set $C_2$. On $C_1 \setminus
  C^*$, the function $h$ is one-ended, so we are done.
\end{proof}

We note here that \emph{(1)} in Theorem~\ref{thm:general_one_end} can be
generalized to locally countable graphs. Several of the functions and sets
that we have used in our argument will be analytic and
$\sigma(\bfSigma^1_1)$-measurable, but these will become Borel after
discarding a nullset. The only other modification we need to make is that in the proof above,
locally countable acyclic
graphs with $0$ ends are not necessarily finite, but they are smooth by
\cite{M09}*{Theorem A}, and hence hyperfinite.

In recent work joint with Damien Gaboriau, the authors have extended
Theorem~\ref{thm:general_one_end} to characterize exactly when a (not
necessarily acyclic) measure preserving locally finite graph $G$ has a
one-ended spanning subforest.

Now we use the ability to find one-ended functions inside a graph to help
definably color the graph.

The following proposition is a trivial modification of \cite{KST}*{Proposition 4.6}. It is proved by partitioning the set $B$ into countably many
$G$-independent Borel sets
$A_0, A_1, \ldots$ and then coloring each vertex the least color in $L(x)$
(with respect to a Borel linear ordering of $Y$) not
already used by one of its neighbors.
\begin{prop}\label{prop:KST_dlc}
  Suppose $G$ is a locally finite Borel graph on a standard Borel space $X$
  and $B \subset X$ is Borel. Then if $Y$ is a Polish space and $L \from B
  \to [Y]^{< \infty}$ is a Borel function such that for every $x \in B$,
  $d_{G \restriction B}(x) < |L(x)|$. Then $G \restriction B$ has a Borel
  coloring from the lists $L$.
\end{prop}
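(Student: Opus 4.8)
The plan is to make definable the classical greedy argument ``degree $<$ list size $\implies$ list-colorable,'' exactly along the lines sketched after the statement. First I would apply \cite{KST}*{Proposition 4.6} (in the form valid for locally finite Borel graphs, producing a Borel coloring with countably many colors) to partition $B = \bigunion_n A_n$ into $G$-independent Borel sets. Since $Y$ is a Polish space, and in particular standard Borel, I would also fix a Borel linear ordering $<$ of $Y$; its only role is to make the choice of ``least available color'' canonical, hence Borel.

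Next I would define the coloring $c \from B \to Y$ by recursion on the index of the partition. Writing $B_n = A_0 \union \dots \union A_n$, suppose inductively that a Borel map $c \restrict B_{n-1}$ has been defined which properly colors $G \restrict B_{n-1}$ from the lists $L$. For $x \in A_n$, let $N(x) = \{y : y \G x \mathand y \in B_{n-1}\}$ be the finite set of already-colored neighbors of $x$, and set $c(x)$ to be the $<$-least element of the finite set $L(x) \setminus \{c(y) : y \in N(x)\}$. Because $A_n$ is $G$-independent, $x$ has no colored neighbor inside $A_n$, so $N(x)$ captures \emph{all} neighbors of $x$ colored so far; since $|N(x)| \leq \deg_{G \restrict B}(x) < |L(x)|$, the set $L(x) \setminus \{c(y) : y \in N(x)\}$ is nonempty and $c(x)$ is well defined. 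The total map $c = \bigunion_n (c \restrict A_n)$ is then a proper coloring of $G \restrict B$ from the lists $L$: any edge joins some $x \in A_n$ to some $y \in A_m$ with $m \neq n$, say $m < n$, and at the stage we colored $x$ we explicitly excluded the color $c(y)$, so $c(x) \neq c(y)$.

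Finally I would verify that $c$ is Borel. The one infinitary ingredient is the neighbor map $x \mapsto N(x)$; because $G$ is locally finite and Borel, Lusin--Novikov uniformization \cite{K}*{Theorem 18.10} supplies countably many Borel partial functions enumerating the neighbors of each vertex, so $x \mapsto \{c(y) : y \in N(x)\}$ is Borel as a map into $[Y]^{< \infty}$. Selecting the $<$-least element of a Borel family of finite subsets of $Y$ is again Borel, so each $c \restrict A_n$ is Borel, and hence so is the countable union $c$. I do not expect a genuine obstacle here: the content is entirely the greedy bound made effective, and the only point needing care is bookkeeping the Borelness of the recursion, namely that the ``least unused color'' selection stays Borel at every stage. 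This is precisely where local finiteness and Lusin--Novikov are used, and it is also the reason the hypothesis must be the strict inequality $\deg_{G \restrict B}(x) < |L(x)|$ rather than an equality.
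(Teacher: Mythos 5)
Your proposal is correct and matches the paper's own proof, which is exactly the same greedy argument: partition $B$ into countably many $G$-independent Borel sets via \cite{KST}*{Proposition 4.6}, then color each vertex with the least color in $L(x)$ (under a Borel linear ordering of $Y$) not already used by a previously colored neighbor. Your added care about Borelness of the neighbor map via Lusin--Novikov is a fine (if routine) elaboration of what the paper leaves implicit.
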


We now have the following lemma which is essentially identical to \cite{CK}*{Lemma 2.18}.
\begin{lemma}\label{lem:partial_dlc}
  Suppose that $G$ is a locally finite Borel graph on a standard Borel
  space $X$, $B$ is a Borel subset of $X$ and $f
  \from B \to X$ is a one-ended Borel function whose graph is contained in
  $G$. If $Y$ is a Polish space and $L \from B \to [Y]^{< \infty}$ is a
  Borel function such that $L(x) \geq \deg_G(x)$ for every $x \in B$, then $G
  \restriction B$ has a Borel coloring from the lists $L$.
\end{lemma}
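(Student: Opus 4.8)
The plan is to color $G \restrict B$ by a single greedy pass in a carefully chosen Borel order, exploiting the distinguished neighbor $f(x)$ of each $x$ as a ``reserved'' neighbor that stays uncolored until after $x$ itself; this is precisely what rescues the greedy procedure in the tight case $|L(x)| = \deg_G(x)$. First I would extract the well-founded structure carried by $f$. Since $G$ is locally finite, $f$ is finite-to-one, and so (by K\"onig's lemma) one-endedness of $f$ is equivalent to every backward orbit $f^{-\N}(x)$ being finite. Hence the relation ``$y$ is a proper $f$-descendant of $x$'' (that is, $f^n(y) = x$ for some $n \geq 1$) is well founded with finite predecessor sets, and I can define a Borel rank function $\rho \from B \to \N$ by $\rho(x) = \sup\{\rho(y)+1 : y \in B,\ f(y) = x\}$ (so leaves have rank $0$); finiteness of the descendant sets makes each $\rho(x)$ finite, and each set $\{x : \rho(x) \leq n\}$ is Borel by induction on $n$, using that $f$ is finite-to-one so that images and preimages under $f$ are Borel. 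The essential feature is that $\rho(f(x)) > \rho(x)$ whenever $f(x) \in B$.

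Next I would fix a Borel proper coloring $\gamma \from B \to \N$ of $G \restrict B$, which exists because $G \restrict B$ is locally finite Borel, and consider the lexicographic preorder $\prec$ on $B$ induced by $x \mapsto (\rho(x), \gamma(x)) \in \N \times \N$. Any two adjacent vertices of $G \restrict B$ receive distinct values of $(\rho,\gamma)$ --- they are separated by $\gamma$ if they share a rank and by $\rho$ otherwise --- so the $\prec$-levels $\{x : (\rho(x),\gamma(x)) = (n,k)\}$ are $G \restrict B$-independent Borel sets and adjacent vertices are always strictly $\prec$-comparable. I then color along these countably many independent levels exactly as in the proof of Proposition~\ref{prop:KST_dlc}: processing the levels in lexicographic order, I assign to each $x$ the $<_Y$-least element of $L(x)$ differing from $c(y)$ for every $G \restrict B$-neighbor $y$ with $y \prec x$. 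This produces a Borel function $c$, and $c$ is a proper coloring since of any two adjacent vertices the $\prec$-later one is defined so as to avoid the color of the $\prec$-earlier one.

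The one thing that must be checked --- and the step I expect to be the crux --- is that the greedy choice never stalls, i.e.\ that when coloring $x$ the set of forbidden colors has size strictly below $|L(x)|$. This is where the reserved neighbor $f(x)$ is used. If $f(x) \in B$, then $f(x)$ is a $G \restrict B$-neighbor of $x$ with $\rho(f(x)) > \rho(x)$, so $x \prec f(x)$ and $f(x)$ is still uncolored when $x$ is processed; thus at most $\deg_G(x) - 1$ of the neighbors of $x$ are forbidden. If instead $f(x) \notin B$, then the $G$-edge from $x$ to $f(x)$ disappears in $G \restrict B$, so $x$ has at most $\deg_G(x) - 1$ neighbors in $B$ to begin with. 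Either way the number of forbidden colors is at most $\deg_G(x) - 1 < \deg_G(x) \leq |L(x)|$, so some color in $L(x)$ remains available and the recursion goes through, yielding the desired Borel coloring of $G \restrict B$ from the lists $L$.
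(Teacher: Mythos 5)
Your proof is correct and takes essentially the same route as the paper: your rank function $\rho$ is exactly the paper's decomposition $B_i = B \inters (f^i[B]\setminus f^{i+1}[B])$, and your single greedy pass over the lexicographic $(\rho,\gamma)$-levels is just an inlined version of the paper's iterated application of Proposition~\ref{prop:KST_dlc} with shrinking lists $L_{i+1}(x) = L_i(x)\setminus\{c(y) : y \in N(x),\ y \in B_j,\ j \le i\}$, with the same crux in both cases that the reserved neighbor $f(x)$ lies in a strictly later level.
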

\begin{proof}
Let $B_i = B \inters (f^i[B] \setminus f^{i+1}[B])$ so that $B_i$ consists
of the points in $B$ that have rank $i$ in the graph generated by $f$.
Note that $B$ is the disjoint union of $B_0, B_1, \ldots$. We will define
a coloring $c$ of $G \restriction B$ from the lists $L$. Let $L_0 = L$.
Now iteratively apply Proposition~\ref{prop:KST_dlc} to color $G
\restriction B_i$ from the lists $L_i$, and then define $L_{i+1}(x) = L_i(x)
\setminus \{c(y) : y \in N(x) \land (\exists j < i ) y \in B_j\}$.
Note that since each $x \in B_i$ has at least one
neighbor $f(x)$ not in $B_0, \ldots, B_{i-1}$, that $d_{G \restriction
B_i}(x) < L_i(x)$.
\end{proof}

\begin{corollary}\label{cor:oneend}
  Suppose that $G$ is a locally finite Borel graph on a standard Borel
  space $X$ and there exists a one-ended Borel function $f \from X \to X$
  whose graph is contained in $G$. Then $G$ is Borel degree-list-colorable.
\end{corollary}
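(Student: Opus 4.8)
The plan is to read the corollary off directly from Lemma~\ref{lem:partial_dlc} by specializing to $B = X$. To verify Borel degree-list-colorability I would fix an arbitrary Polish space $Y$ together with an arbitrary Borel function $L \from X \to [Y]^{<\infty}$ satisfying $|L(x)| = \deg_G(x)$ for every $x \in X$, and then aim to produce a Borel coloring of $G$ from the lists $L$.

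The hypothesis furnishes a one-ended Borel function $f \from X \to X$ whose graph is contained in $G$, and $X$ is trivially a Borel subset of itself, so the data $(G, X, f, Y, L)$ meet the hypotheses of Lemma~\ref{lem:partial_dlc}: the only list condition required there is $|L(x)| \geq \deg_G(x)$, and this is implied by the equality $|L(x)| = \deg_G(x)$ demanded in the definition of degree-list-colorability. Applying the lemma yields a Borel coloring $c \from X \to Y$ of $G \restriction X = G$ from the lists $L$. Since $Y$ and $L$ were arbitrary, $G$ is Borel degree-list-colorable, which is exactly the conclusion.

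I do not anticipate any real obstacle here: the corollary is a direct specialization of Lemma~\ref{lem:partial_dlc}, and the only point requiring attention is the trivial observation that the rigid constraint $|L(x)| = \deg_G(x)$ is a particular case of the relaxed constraint $|L(x)| \geq \deg_G(x)$ handled by that lemma. If one wished to recall the underlying mechanism rather than invoke the lemma as a black box, the essential idea is that the well-foundedness of the one-ended function $f$ stratifies $X$ into the Borel rank classes $B_i = f^i[X] \setminus f^{i+1}[X]$; coloring these in increasing order of rank (using Proposition~\ref{prop:KST_dlc} on each $B_i$) and deleting, at each stage, the colors already committed to lower-rank neighbors succeeds because $f(x)$ always has strictly larger rank than $x$, so at most $\deg_G(x) - 1$ neighbors of $x$ are colored before it and a free color always remains available.
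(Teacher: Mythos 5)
Your proposal is correct and matches the paper's intended argument exactly: the paper states Corollary~\ref{cor:oneend} without proof immediately after Lemma~\ref{lem:partial_dlc}, precisely because it is the specialization $B = X$ of that lemma, with the equality $|L(x)| = \deg_G(x)$ trivially satisfying the lemma's hypothesis $|L(x)| \geq \deg_G(x)$. Your sketch of the underlying rank-stratification mechanism also faithfully reproduces the lemma's own proof, so there is nothing to add.
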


We note that this implies that for every finite $d$ there is an acyclic
Borel graph $G$ on $X$ of degree $d$ such that there is no one-ended Borel
function $f \from X \to X$ whose graph is contained in $G$. This is because
by~\cite{Ma}*{Theorem 1.3} for every finite $d$, there is an acyclic
Borel graph of degree $d$ with no Borel $d$-coloring.

\section{A proof of the measurable Brooks's theorem}


We are now ready to prove Theorem~\ref{thm:Borel_Gallai} from the
introduction.

\begin{thm}\label{no_gallai}
Suppose that $G$ is a locally finite Borel graph on a standard Borel space
$X$ and let $B$ be the set of vertices contained in connected components of
$G$ that are not Gallai trees. Then $G \restriction B$ is Borel degree-list-colorable.
\end{thm}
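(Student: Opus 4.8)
The plan is to reduce the Borel degree-list-coloring of $G \restriction B$ to two ingredients: the finite degree-choosability theorem of Borodin~\cite{B} and Erd\H{o}s--Rubin--Taylor~\cite{ERT}, applied blockwise, together with the Borel ``point toward a seed'' construction of Proposition~\ref{prop:pointaway}. First I would restrict attention to $B$, which is Borel and $G$-invariant, so that without loss of generality every connected component of $G$ fails to be a Gallai tree; equivalently, every component contains a \emph{bad} block, one that is neither a complete graph nor an odd cycle. Since $G$ is locally finite, membership of a vertex in a bad block is a Borel condition: for finite blocks this is a bounded search on the induced subgraph, and any infinite block is automatically bad because finite-degree complete graphs and odd cycles are finite. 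Thus the union of the bad blocks is a Borel set meeting every component.

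The core construction is to select, in a Borel way, a seed set $A \subset B$ that meets every component, is contained in the bad blocks, and is \emph{sparse}: sparse enough that each selected block is an isolated finite connected component of $G \restriction A$. This selection can be carried out by a bounded-distance coloring of an auxiliary graph on the bad blocks, exactly in the style of the distance-$\leq 3$ coloring used in the proof of Lemma~\ref{lem:ample}. Applying Proposition~\ref{prop:pointaway} to $A$ then yields a one-ended Borel function $f \from B \setminus A \to B$ whose graph lies in $G$. Given a Borel degree-list assignment $L$ with $|L(x)| = \deg_G(x)$, I color in two phases. In the first phase I color $G \restriction (B \setminus A)$ from $L$ by Lemma~\ref{lem:partial_dlc}: processing the $f$-leaves first and moving inward toward $A$, each vertex retains its $f$-parent uncolored at the moment it is colored — the parent lies at higher $f$-rank, or lies in $A$ and is colored in the second phase — so there is always a free color. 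In the second phase I color $A$ last. A vertex $x$ in a selected block $K$ has all its neighbors outside $K$ already colored, so its remaining list has size at least $\deg_K(x)$; hence $G \restriction A$ reduces to degree-list-coloring each finite bad block $K$ from lists of size $\geq \deg_K$, which is precisely the situation resolved by the Borodin--Erd\H{o}s--Rubin--Taylor theorem. The finitely many choices per block are made Borel-uniformly by Luzin--Novikov uniformization.

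The main obstacle is the case of components all of whose bad blocks are infinite — for instance a $2$-ended ``ladder'', which is a single infinite block. Here there is no finite block to serve as a root, so the blockwise finite theorem does not directly apply; and since such a component may be $2$-ended (or worse) no total one-ended function exists even abstractly, so Corollary~\ref{cor:oneend} is unavailable. Because Theorem~\ref{no_gallai} demands a genuinely Borel coloring with no exceptional null or meager set, the ample-graph constructions of Lemma~\ref{lem:ample} and Lemma~\ref{lem:ample_bc} also cannot be invoked. I expect this to be the hardest step. My plan for it is to replace the single finite root by a spread-out Borel seed inside each infinite block: a Borel set $A$ meeting every component for which $G \restriction A$ is simple enough to color directly — for example $G$-independent, or a disjoint union of finite degree-choosable pieces — while pointing toward $A$ by Proposition~\ref{prop:pointaway} still supplies the one free color to every vertex of $B \setminus A$. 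Locating such a seed amounts to Borel-izing the internal even-cycle ($2$-choosability) structure of an infinite $2$-connected graph; the reduction to the degree-$\geq 3$ skeleton in Lemma~\ref{lem:reduce_to_ample} indicates along which vertices the seed should be chosen, and verifying that the reduced lists on $A$ never become too small is where I anticipate the argument being most delicate.
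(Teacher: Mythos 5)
Your two-phase architecture is exactly that of the paper's proof: a sparse Borel family of finite seeds meeting every component, Proposition~\ref{prop:pointaway} to point $B$ minus the seeds toward them, Lemma~\ref{lem:partial_dlc} to color $G$ restricted to the complement of the seeds, and then the Borodin--Erd\H{o}s--Rubin--Taylor theorem \cite{B}, \cite{ERT} plus Lusin--Novikov uniformization to finish on the seeds. But the case you flag at the end is a genuine gap, not a delicate refinement, and it is precisely where your proposal breaks: you require the seeds to be \emph{finite bad blocks} of $G$, and a non-Gallai-tree component need have no finite bad block at all. The bi-infinite ladder is the basic example: it is $2$-connected, so it is a single block, which is infinite. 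Moreover, your sketched fallback for this case cannot work as stated. If the seed $A$ inside such a component is merely $G$-independent (or anything not degree-list-colorable ``from within''), then after phase one a vertex $x \in A$ has all $\deg_G(x)$ of its neighbors colored while its list has size only $\deg_G(x)$, so nothing guarantees a surviving color; no amount of sparseness repairs this, because the free colors for seed vertices must come from the seed's own internal structure. (Incidentally, your side remark that a $2$-ended component admits no total one-ended function ``even abstractly'' is false --- split a bi-infinite line into two rays pointing away from each other; the obstruction to Corollary~\ref{cor:oneend} there is measure-theoretic, not abstract.)

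The repair, which is how the paper's proof actually proceeds, is to decouple the seeds from the block structure entirely: take as potential seeds all finite sets $S$ lying in one component such that \emph{every connected component of $G \restriction S$ fails to be a Gallai tree}, and sparsify by a Borel $\N$-coloring of the intersection graph on the sets $S \union \partial S$ (for locally finite graphs of unbounded degree one uses $\N$-colorings rather than the finite bounded-distance colorings of Lemma~\ref{lem:ample}, but these exist). What makes this succeed where blocks fail is the graph-theoretic fact that every connected locally finite non-Gallai tree contains a \emph{finite induced} subgraph that is not a Gallai tree, even when all of its bad blocks are infinite. In the ladder these are the squares: induced $4$-cycles, which are non-Gallai seeds but are not blocks of $G$. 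In general, inside an infinite bad block start with a cycle and repeatedly attach ears (supplied by the fan version of Menger's theorem); each stage is a finite $2$-connected induced subgraph which is not a cycle, so it can fail to be a non-Gallai tree only by being complete, and local finiteness bounds the size of complete induced subgraphs containing a fixed vertex, so after finitely many ear additions it is not complete --- and a finite $2$-connected graph is its own unique block, hence not a Gallai tree. With seeds of this kind, the reduced lists on a seed component have size at least its internal degrees, \cite{B}, \cite{ERT} applies componentwise, and the remainder of your two-phase argument goes through verbatim.
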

\begin{proof}
  Let $[E_G]^{< \infty} \subset [X]^{< \infty}$ be the finite subsets $S$ of
  $X$ that are contained in a single connected component of $G$. Let $G_I$
  be the intersection graph on $[E_G]^{< \infty}$ so $R \mathrel{G_I}
  S$ if $R \inters S \neq \emptyset$. Then $G_I$ has a Borel $\N$-coloring
  $c_I$ (see \cite{KM}*{Lemma 7.3} and \cite{CM2}*{Proposition 2}).

  If $S \subset X$ is a finite set, its boundary in $G$ is $\partial S =
  \{y \notin S : \exists x \in S (x \mathrel{G} y)\}$.
  Let $A \subset [E_G]^{< \infty}$ be collection of finite sets $S$ such
  that each connected component of $G \restriction S$ is not a Gallai tree.
  Let $A' \subset A$ be the set of $S \in A$ such that $c_I(S \union
  \partial S) \leq c_I(R \union \partial R)$ for all $R \in A$ in the
  same $G$-component as $S$. Let $B' = \bigunion A'$, so $B' \subset B$,
  and each connected component of $G \restriction B'$ is finite and not a
  Gallai tree, and $B'$ meets each connected component of $G \restriction
  B$.

  Let $L \from X \to [Y]^{< \infty}$ be an assignment of lists to each
  element of $x$ so that $\deg_G(x) = |L(x)|$.
  By Proposition~\ref{prop:pointaway}, we can find a one-ended
  function $f \from B \setminus B' \to B$, and by
  Lemma~\ref{lem:partial_dlc} we can find a coloring $c$ of $G \restriction (B
  \setminus B')$ from the lists $L$. Now let $L' \from B' \to [Y]^{<
  \infty}$ be defined by $L'(x) = L(x) \setminus \{c(y) : y \mathrel{G} x
  \mathand y \in B \setminus B'\}$. To finish, there is at least one coloring of each
  connected component of
  $G \restriction B'$ from the lists $L'$. Hence, by Lusin-Novikov
  uniformization~\cite{K}*{Theorem 8.10} we can extend $c$ to a coloring of
  $G \restriction B$ from the lists $L$.
\end{proof}

We are now ready to prove a version of Theorem~\ref{thm:mb} for list
colorings.

\begin{thm}\label{thm:mb_listcoloring}
Suppose that $G$ is a locally finite Borel graph on a standard Borel space
$X$ and $G$ contains no connected components that are finite Gallai trees,
and no infinite connected components that are $2$-ended Gallai trees.
\begin{enumerate}
\item Let $\mu$ be any Borel probability measure on $X$. Then there is a
$\mu$-conull $G$-invariant Borel set $B$ so that $G \restriction B$ is
Borel
degree-list-colorable.
\item Let $\tau$ be any Polish topology compatible with the Borel structure
on $X$. Then there is a $G$-invariant comeager Borel set $B$ so that $G
\restriction B$ is Borel degree-list-colorable.
\end{enumerate}
\end{thm}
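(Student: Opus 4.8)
The plan is to split $X$ according to the dichotomy built into the hypotheses. Let $N$ be the $G$-invariant Borel set of vertices lying in connected components that are \emph{not} Gallai trees (this is the complement of the set handled by Theorem~\ref{no_gallai}, and is Borel). On $N$ there is nothing to discard: by Theorem~\ref{no_gallai} the graph $G \restrict N$ is Borel degree-list-colorable, and since $N$ is $G$-invariant we have $\deg_{G \restrict N} = \deg_G$ on $N$. It remains to treat $X \setminus N$, the union of the Gallai tree components. By hypothesis these are all infinite (no finite Gallai trees) and none is $2$-ended; since an infinite connected graph has at least one end, each such component has either $1$ end or at least $3$ ends.

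The difficulty is that Gallai trees are not acyclic, so Theorem~\ref{thm:general_one_end} does not apply to them directly. I would circumvent this by passing to an acyclic skeleton with the same ends. Because $G$ is locally finite, every block of a Gallai tree component is a \emph{finite} complete graph or a finite odd cycle. Choose, within each block, a spanning tree; this can be done in a Borel way, for instance by fixing a Borel linear order of $X$ to pick a canonical spanning tree of each block and using a Borel $\N$-coloring of the intersection graph on blocks (exactly as in the proof of Theorem~\ref{no_gallai}) to make the selection Borel. Let $F \subset G \restrict (X \setminus N)$ be the union of all these within-block trees. Gluing the spanning trees of the blocks along their shared cut vertices yields a spanning tree of each component, so $F$ is a locally finite acyclic Borel graph whose connected components are precisely the Gallai tree components of $G$.

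The crux is that $F$ has the same ends as $G \restrict (X \setminus N)$. I claim that each component $C$ and its skeleton $F_C \subset C$ have end sets canonically in bijection with the ends of the block--cutpoint tree of $C$. This uses essentially that all blocks are finite: any ray must pass through infinitely many distinct blocks, so the sequence of blocks traversed determines a ray in the block tree, and two rays are end-equivalent in $C$ (respectively in $F_C$) exactly when their block-sequences are cofinal; removing a finite vertex set in $C$ versus in $F_C$ separates the infinite pieces in the same way, since only finitely many (finite) blocks are affected. Hence no component of $F$ has $0$ or $2$ ends, and Theorem~\ref{thm:general_one_end} applies to $F$: in case (1) it yields a $\mu$-conull Borel set carrying a one-ended Borel function $f$ with $\graph(f) \subset F \subset G$, and in case (2) a $\tau$-comeager such set. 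I expect this end-preservation step (together with the routine Borelness of the block and spanning-tree selection) to be the main obstacle; everything else is assembly of the cited results.

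Finally I would assemble the conclusion. Working under the standing assumption that $\mu$ (resp.\ $\tau$) is $G$-quasi-invariant, shrink the conull (resp.\ comeager) domain of $f$ to a $G$-invariant conull (resp.\ comeager) Borel set $B_g \subset X \setminus N$; the restriction $f \restrict B_g$ is still one-ended with graph in $G$, and $G$-invariance of $B_g$ gives $\deg_{G \restrict B_g} = \deg_G$ there. Set $B = N \union B_g$, which is $G$-invariant and conull (resp.\ comeager). Given any list assignment $L$ with $|L(x)| = \deg_{G \restrict B}(x)$, Lemma~\ref{lem:partial_dlc} colors $G \restrict B_g$ from $L$ while Theorem~\ref{no_gallai} colors $G \restrict N$ from $L$; as $N$ and $B_g$ are unions of distinct components, the two Borel colorings combine to a Borel coloring of $G \restrict B$ from $L$. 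This establishes both part (1) and part (2).
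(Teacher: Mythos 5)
Your argument is correct, and while it follows the paper's overall strategy (dispose of the non-Gallai-tree components by Theorem~\ref{no_gallai}, exploit the finiteness of blocks of a locally finite Gallai tree, produce a one-ended Borel function whose graph lies in $G$, and finish with Corollary~\ref{cor:oneend} and Lemma~\ref{lem:partial_dlc}), it implements the crucial reduction-to-acyclic step genuinely differently. The paper applies Theorem~\ref{thm:general_one_end} to the intersection graph $G_I \restrict Y$ on the standard Borel space $Y \subset [X]^{< \infty}$ of blocks (acyclic because blocks are maximal biconnected sets), and must then \emph{lift} the resulting one-ended function on blocks to a one-ended function $\hat{f}$ contained in $G$ via an explicit construction using Borel linear orders $<_X$ and $<_Y$; you instead build an acyclic spanning Borel subgraph $F \subset G$ of the Gallai-tree part by choosing spanning trees inside each finite block and gluing along cut vertices (acyclic because every cycle of $G$ lies in a single block), and apply Theorem~\ref{thm:general_one_end} to $F$ directly. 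Your route buys two real simplifications: the lifting step disappears entirely, and, more substantively, no transfer of $\mu$ or $\tau$ to the auxiliary space $Y$ is needed --- a point the paper silently glosses over, since it never specifies which measure or topology on $Y$ it invokes Theorem~\ref{thm:general_one_end} with, and making that precise requires a pushforward plus an invariance argument (your proof only needs the routine restriction to the invariant set $X \setminus N$, which the paper performs as well). The price is your end-preservation claim for $F$, which is indeed the crux; it is true, and is best recorded as two separate implications. First, $F$ cannot have fewer ends than its component $C$: for any finite $S$, the graph $F \restrict (C \setminus S)$ has only finitely many connected components (each contains a vertex $F$-adjacent to $S$), and each is contained in a single component of $C \setminus S$, so every infinite component of $C \setminus S$ contains an infinite component of $F \restrict (C \setminus S)$; this handles the components with at least $3$ ends. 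Second, $F$ cannot have more ends than $C$, and here finiteness of blocks is essential: if two rays of $F$ are end-equivalent in $C$, then for any finite $S$, letting $T$ be the union of the finitely many blocks meeting $S$, a $C$-path between tails of the rays avoiding $T$ becomes an $F$-walk avoiding $S$ after replacing each edge by its within-block tree path (an edge with both endpoints outside $T$ lies in a block disjoint from $S$), so the rays are end-equivalent in $F$; this handles the one-ended components. Note that the paper relies on the exactly analogous, and likewise unproved, assertion that $G_I \restrict Y$ inherits the end structure of $G$, so your sketch is at the same level of rigor as the published proof; with the two implications above spelled out, yours is arguably the more self-contained argument. (A minor remark: the Borel $\N$-coloring of the block intersection graph is not actually needed for your spanning-tree selection --- a Borel linear order of $X$ already determines a canonical spanning tree of each block, and the selection is Borel because each edge lies in a unique block.)
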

\begin{proof}
  The theorem follow by combining Theorem~\ref{no_gallai},
  Theorem~\ref{thm:general_one_end}, and Corollary~\ref{cor:oneend}.

  Let
  $A$ be the set of vertices contained in connected components of $G$ that
  are not Gallai trees. Then $G \restriction A$ is Borel
  degree-list-colorable by Theorem~\ref{no_gallai}. Hence, we may as well
  assume that every connected component of $G$ is an infinite Gallai tree.

  Now let $Y \subset [X]^{< \infty}$ be the Borel set of blocks of $G$, and
  consider the intersection graph $G_I \restriction Y$ on blocks so that
  two distinct blocks $R, S \in Y$ are adjacent if $R \inters S \neq
  \emptyset$. Since these blocks are maximal biconnected components of $G$,
  there cannot be any cycles in $G_I$, since such a cycle would imply its
  constituent blocks were not maximal biconnected components of $G$.
  Similarly, any two blocks intersect at a unique vertex.
  Finally, since no connected components of $G$ are $2$-ended Gallai
  trees, no connected components of $G_I \restriction Y$ have $0$ or $2$
  ends.

  Thus, by Theorem~\ref{thm:general_one_end} we can find a Borel one-ended
  subforest of $G_I \restriction Y$ modulo a null or meager set induced by
  a function $f$. The function $f$ then ``lifts'' to a one-ended function
  $\hat{f}$ contained in
  $G$ as follows. Fix a Borel linear orderings $<_X$ of $X$ and $<_Y$ of
  $Y$. Now given a vertex $x$, since each connected component of $G$ is a
  locally finite infinite Gallai tree, $x$ is contained at least one and at
  most finitely many blocks of $G$. Let $g(x)$ be the $<_Y$-least block
  containing $x$. Now define a Borel function $g'(x)$ by letting $g'(x) =
  g(x)$ if $x$ is not contained in $f(g(x))$, and $g'(x) = f(g(x))$
  otherwise. Hence, $g'(x)$ maps each vertex $x$ to a block containing $x$
  so that $x$ is not in $f(g'(x))$. Now let the function $\hat{f}$ map each
  $x$ to the next vertex along the $<_X$-lex least path from $x$ to an
  element of $f(g'(x))$.

  We then finish the proof of the theorem
  by applying Corollary~\ref{cor:oneend} to $\hat{f}$.
\end{proof}

We note that one application of the above theorem is a new way of constructing
antimatchings (see \cite{Ma}).
Recall that an \define{antimatching} of a graph $G$ on a set
$X$ is a Borel function $f \from X \to X$ contained in the graph of $G$
such that for every $x$, we have $f(f(x)) \neq x$. If $G$ is a
locally finite graph, then we can map each $x$ to the set $L(x)$ of edges
in $G$ incident to $x$. Then a coloring $c$ of $G$ from the lists $L$ can
be used to define an antimatching, by letting $f(x)$ be the unique neighbor
$y$ of $x$ in the edge $c(x) = \{x,y\}$.

We can now prove Theorem~\ref{thm:mb} from the introduction:
\begin{proof}[Proof of Theorem~\ref{thm:mb}:]
  Suppose $G$ is a Borel graph of bounded degree at most $d$, and $G$ does
  not contain a complete graph on $d+1$ vertices. Let $A$ be the set of
  vertices of degree strictly less than $d$. We begin by $d$-coloring the
  connected components $[A]_{E_G}$ contain an element of $A$. Our idea is
  to color some element of $A'$ ``last.''

  To begin, let $A' \subset A$ be $G$-independent Borel set
  that meets every connected component of $G \restriction
  [A]_{E_G}$. Such an $A'$ exist by taking a Borel $(d+1)$-coloring of $G$
  by \cite{KST}*{Proposition 4.6} and then letting $A'$ be the elements of
  $A$ assigned the least color among all elements
  of $A'$ in the same connected component. Now apply
  Proposition~\ref{prop:pointaway}, and Lemma~\ref{lem:partial_dlc}
  to obtain a Borel
  $d$-coloring of $G \restriction [A]_{E_G} \setminus A'$, and then color
  each element of $A'$ the least color not already used by one of its
  neighbors.

  To finish, we need to color the remainder $G \restriction (X \setminus
  A)$ and so it suffices to show that Theorem~\ref{thm:mb} is true for
  $d$-regular graph. But this follows from
  Theorem~\ref{thm:mb_listcoloring} since the only finite Gallai trees that
  are $d$-regular are complete graphs on $d+1$ vertices, and the only
  infinite regular two-ended Gallai trees are bi-infinite lines.
\end{proof}

We briefly discuss an alternate way of proving Theorem~\ref{thm:mb}. The case
$d = 3$ of the theorem is fairly easy to analyze directly. One can then
reduce to the case $d = 3$ by
iteratively
removing maximal independent sets meeting every $d$-clique using the
following proposition.

\begin{prop}\label{lemma:cliquebusting}
  Suppose $G$ is a Borel graph on a standard Borel space $X$ of finite
  bounded degree $\leq d$, where $d \geq 3$. Suppose further that $G$
  contains no cliques on $d+1$ vertices.
\begin{enumerate}
  \item Let $\mu$ be any Borel probability measure on $X$. Then there is a
  $\mu$-measurable maximal independent set $A \subset X$ that meets every
  $d$-clique contained in $G$.

  \item Let $\tau$ be any Polish topology compatible with the Borel structure
  on $X$. Then there is a
  Baire measurable maximal independent set $A \subset X$ that meets every
  $d$-clique contained in $G$.
\end{enumerate}
\end{prop}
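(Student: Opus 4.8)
The plan is to reduce the statement to producing a single \emph{independent} set meeting every $d$-clique, and then to build that transversal along a well-founded order supplied by a one-ended function. I will describe part (1); part (2) is verbatim the same with ``$\mu$-conull'' replaced by ``$\tau$-comeager'' throughout. First I would observe that it suffices to construct a Borel $G$-independent set $A_0$ meeting every $d$-clique: letting $W = X \setminus N_G[A_0]$ be the Borel set of vertices neither in $A_0$ nor adjacent to it, the bounded degree Borel graph $G \restrict W$ has a maximal Borel independent set by \cite{KST}*{Proposition 4.6}, and adjoining it to $A_0$ gives a maximal independent set that still meets every $d$-clique. Moreover, exploiting quasi-invariance exactly as elsewhere in the paper, it is enough to build $A_0$ as a Borel set on a $G$-invariant $\mu$-conull Borel set $B$. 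On the complementary $G$-invariant $\mu$-null Borel set $N$ there are no edges leaving $N$, and classical Brooks (Theorem~\ref{thm:ClassicBrooks}, applied componentwise through a compactness argument) produces a $d$-coloring of $G \restrict N$; since any $d$-clique is a clique of size $d$ it uses all $d$ colors, so the first color class is independent and meets every $d$-clique of $G \restrict N$, and it is automatically $\mu$-measurable because $N$ is null. The two pieces glue since $N$ is $G$-invariant.

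Next I would record the clique geometry I intend to use. In a $d$-clique $K$ every vertex has $d-1$ neighbors inside $K$ and hence at most one neighbor outside $K$; write $e_K(v)$ for this external neighbor when it exists. Counting degrees shows that two distinct $d$-cliques sharing a vertex in fact share exactly $d-1$ vertices, and their two differing apexes are non-adjacent (else $G$ would contain $K_{d+1}$). It follows that the \emph{conflict graph} $\Gamma$ on the Borel set $\mathcal{K} \subset [X]^{=d}$ of $d$-cliques, where $K \mathrel{\Gamma} K'$ iff $K \neq K'$ and some vertex of $K$ is equal or $G$-adjacent to some vertex of $K'$, has bounded degree and so admits a finite Borel coloring by \cite{KST}*{Proposition 4.6}. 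The key \emph{blocking lemma} is the following: if $A_0$ is independent and $A_0 \inters K = \emptyset$, then $A_0$ can meet $N_G(v)$ for a vertex $v \in K$ only through $e_K(v)$, and any single vertex of $A_0$ is $G$-adjacent to at most one vertex of $K$ (a committed representative of another clique spends $d-1$ of its at most $d$ edges inside its own clique). Hence $K$ is denied a representative precisely when \emph{every} external neighbor $e_K(v)$ already lies in $A_0$.

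The heart of the argument is the selection of the transversal. The naive strategy --- Borel-color $\Gamma$ and greedily take one vertex from each uncovered clique in order of color --- fails: a clique can be blocked by representatives chosen for conflicting cliques of lower color, which is exactly the long-range dependency responsible for the failure of a \emph{Borel} Brooks theorem. To defeat this I would process the cliques not by color but along a \emph{well-founded} order. The plan is to extract from the clique intersection structure an auxiliary acyclic Borel graph to which Theorem~\ref{thm:general_one_end} applies; after peeling off the finite and two-ended components directly (via Proposition~\ref{prop:pointaway} and Lemma~\ref{lem:iterative_oes}) this yields, off a $G$-invariant null set, a one-ended Borel function and hence a Borel ranking of the cliques with no infinite descending chains. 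One then selects representatives in order of increasing rank, at each clique choosing a vertex whose external neighbor has not yet been committed, with the list-coloring bookkeeping of Lemma~\ref{lem:partial_dlc} packaging the selection and ensuring the chosen representatives form a Borel independent set.

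The hard part will be the blocking bookkeeping. By the blocking lemma each already-committed conflicting clique forbids at most one of the $d$ vertices of a clique $K$, but the conflict degree of $\Gamma$ can exceed $d$, so well-foundedness alone does not guarantee that some vertex of $K$ survives; the real content is arranging the auxiliary acyclic graph and the order so that, for each $K$, at least one external neighbor $e_K(v)$ is ranked after $K$ (equivalently, is never committed), keeping the number of forbidden vertices below $d$. This is precisely an independent-transversal problem with edge conflicts, and it is the step where the one-ended scaffolding must be leveraged most carefully; I expect it, rather than the reduction or the clique geometry, to be the main obstacle.
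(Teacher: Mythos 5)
Your reduction and your clique geometry are both correct, but the proposal stops exactly where the proof has to happen, and the scaffolding you intend to lean on is not actually available. The ``auxiliary acyclic Borel graph'' extracted from the clique intersection structure is never defined, and it cannot simply be the conflict graph $\Gamma$: that graph is in general not acyclic (already for $d=3$, take three vertex-disjoint triangles joined pairwise by three edges, which is a degree-$3$ graph with no $K_4$ whose three triangles form a $3$-cycle in $\Gamma$), and its components can be finite, hence $0$-ended, so Theorem~\ref{thm:general_one_end} --- which requires an acyclic graph with no $0$- or $2$-ended components --- does not apply to it. Extracting an acyclic Borel subgraph spanning the cliques of each component is itself a measurable spanning-forest problem that this paper's tools do not solve for non-acyclic graphs. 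Moreover, even granting a well-founded Borel ordering of the cliques, you yourself observe that well-foundedness does not prevent a clique from becoming fully blocked, and the property you say must be arranged --- that for each clique $K$ some external neighbor $e_K(v)$ is never committed --- is precisely the combinatorial content of the proposition; no mechanism for arranging it is given. By your own acknowledgement, the ``main obstacle'' is left unresolved, so this is a plan rather than a proof.

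The gap is also unnecessary, because the statement does not need one-ended machinery at all; the paper disposes of it in two much shorter ways. First, it is an immediate corollary of Theorem~\ref{thm:mb}: in any $\mu$-measurable (or Baire measurable) $d$-coloring, every color class meets every $d$-clique (a $d$-clique receives $d$ distinct colors), and a single color class extends to a maximal independent set by applying the greedy argument of \cite{KST}*{Proposition 4.6} off its closed neighborhood. Second, the paper sketches a direct local argument: on the set $Y$ of vertices lying in a unique $d$-clique one defines the relation $E$ (having the same unique clique) and the relation $F$ (adjacency without $E$-relatedness), both of which have finite classes, and then invokes \cite{Ma}*{Lemma 4.4.1} to produce a $\mu$-measurable or Baire measurable set meeting every $E$-class exactly once and every $F$-class at most once, which is then extended to the desired maximal independent set. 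The structural point you missed is the one your own blocking lemma suggests: each committed vertex can block at most one vertex of $K$, and only the $d$ external neighbors of $K$ can ever block it, so the obstruction is finite and local. The problem is therefore a selection problem over finite classes, solvable by a measure-exhaustion/category selection lemma, not a long-range coloring problem requiring a one-ended skeleton.
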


Of course, this proposition follows from Theorem~\ref{thm:mb} by extending one
of the colors in a $d$-coloring (which must meet every $d$-clique) to a
maximal independent set. However, it is also simple to prove this
proposition directly. Let $Y \subset X$ be the vertices that are contained
in a unique $d$-clique. Then let $E$ and $F$ be the relations on $Y$ where
$x \E y$ if the unique $d$-cliques containing $x$ and $y$ are equal and $x
\mathrel{F} y$ if $x = y$ or $x$ and $y$ are adjacent in $G$ and are not
$E$-related. One can then use \cite{Ma}*{Lemma 4.4.1} to find a $\mu$-measurable or Baire
measurable set $A$ meeting every $E$-class in exactly one point and every
$F$-class in at most one point. From here, extending $A$ to the desired set
is straightforward.

\section{Applications to group actions}\label{sec:applications}

We consider now (almost everywhere) free, measure-preserving actions of a finitely generated group $\Gamma$ on a standard probability space $(X,\mu)$.  Denote by $\freeactions(\Gamma,X,\mu)$ the set of such actions.  With each $a \in \freeactions(\Gamma,X,\mu)$ and finite, symmetric generating set $S$ of $\Gamma$ not containing the identity we may associate a graph $G(S,a)$ on $X$ by declaring $x$ and $y$ adjacent if there exists $s \in S$ with $s \cdot x = y$.  Freeness of the action implies that almost every connected component of $G(S,a)$ is isomorphic to the Cayley graph $\Cay(\Gamma, S)$.

In \cite{CKT}*{Theorem 6.1} it is shown that for finitely generated infinite groups $\Gamma$, any $a \in \freeactions(\Gamma,X,\mu)$ is weakly equivalent to some $b \in \freeactions(\Gamma,X,\mu)$ whose associated graph $G(S,b)$ is measure-theoretically $|S|$-colorable.  Theorem \ref{thm:mb} eliminates the need to pass to a weakly equivalent action for almost all groups.

\begin{corollary}
  Suppose that $\Gamma$ is an infinite group with finite, symmetric generating set $S$ such that $|S| \geq 3$.  Then for any $a \in \freeactions(\Gamma,X,\mu)$ the graph $G(S,a)$ admits a Borel $|S|$-coloring on a conull set.
\end{corollary}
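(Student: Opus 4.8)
The plan is to reduce the statement directly to Theorem~\ref{thm:mb}.1 by verifying its two hypotheses for $G(S,a)$ restricted to the conull free part. Write $d = |S| \geq 3$ and let $\Free(a) \subset X$ denote the set of points with trivial stabilizer. Since $a$ is almost everywhere free, $\Free(a)$ is a conull, $G(S,a)$-invariant Borel set, so it suffices to produce a Borel $d$-coloring of $G(S,a) \restrict \Free(a)$ on a conull set. First I would record that $G(S,a)$ has degree at most $d$ everywhere, and that on $\Free(a)$ it is exactly $d$-regular: for $x \in \Free(a)$ the map $s \mapsto s \cdot x$ is injective and omits $x$ (as $e \notin S$), so $x$ has precisely the $d$ distinct neighbors $\{s \cdot x : s \in S\}$. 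Moreover, by freeness every connected component of $G(S,a) \restrict \Free(a)$ is isomorphic to the connected, infinite graph $\Cay(\Gamma, S)$.

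The one nonformal point---and the crux of the argument---is verifying that $G(S,a) \restrict \Free(a)$ contains no complete graph on $d + 1$ vertices. Here I would exploit the $d$-regularity: if $v_0, \ldots, v_d$ were pairwise adjacent, then each $v_i$ would already have all $d$ of its neighbors inside $\{v_0, \ldots, v_d\}$, so no edge could leave this set and it would constitute an entire connected component isomorphic to the complete graph on $d+1$ vertices. But each component is isomorphic to $\Cay(\Gamma, S)$, which is connected (because $S$ generates $\Gamma$) and infinite (because $\Gamma$ is infinite), a contradiction. Thus the hypothesis that $G$ contains no complete graph on $d+1$ vertices holds for $G(S,a)\restrict \Free(a)$.

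With both hypotheses in hand, I would apply Theorem~\ref{thm:mb}.1 to the Borel graph $G(S,a) \restrict \Free(a)$ on the standard Borel space $\Free(a)$, equipped with the restriction of $\mu$, to obtain a $\mu$-measurable $|S|$-coloring. Finally, Proposition~\ref{prop:measurable_equiv}.1 converts this into a Borel $|S|$-coloring of $G(S,a)$ defined on a conull, $G$-invariant Borel subset of $\Free(a)$, which is conull in $X$; this is exactly the asserted Borel $|S|$-coloring on a conull set.

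I expect essentially no obstacle beyond the clique computation sketched above: once the absence of $(d+1)$-cliques is established, the corollary is a one-line invocation of the main theorem, with the degree bound immediate from the definition of $G(S,a)$ and the passage between $\mu$-measurability and a Borel coloring on a conull set handled by Proposition~\ref{prop:measurable_equiv}. The only place where the hypothesis $|S| \geq 3$ enters is in meeting the requirement $d \geq 3$ of Theorem~\ref{thm:mb}, which is precisely why the cases $|S| \leq 2$ (e.g.\ $\Gamma = \Z$ with its standard generators) are excluded.
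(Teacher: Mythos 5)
Your proposal is correct and follows exactly the route the paper intends: the corollary is stated there as an immediate application of Theorem~\ref{thm:mb}, with the key observation being precisely your clique computation---on the free part the graph is $|S|$-regular with infinite connected components (copies of $\Cay(\Gamma,S)$), so a $(|S|+1)$-clique would be a finite component, which is impossible. Your passage through Proposition~\ref{prop:measurable_equiv} to convert the $\mu$-measurable coloring into a Borel coloring on a conull invariant set is likewise the intended (implicit) final step, so there is nothing to add.
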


\begin{remark}
  The only infinite groups with symmetric generating sets $S$ satisfying $|S| < 3$ are $\Z$ with $S = \{\pm 1\}$ and $(\Z/2\Z)*(\Z/2\Z) = \langle a, b \mid a^2 = b^2 = \id \rangle$ with $S = \{a,b\}$.  Indeed, no graph associated with a free mixing action of either group admits a Borel $2$-coloring on a conull set.
\end{remark}

Finally, the methods of section \ref{section:subforests} may be used in
conjunction with some techniques from probability to improve known bounds
on the colorings of Cayley graphs attainable by factors of IID.
We
consider the \define{Bernoulli shift} action of a countable group $\Gamma$
on the space $[0,1]^\Gamma$ equipped with product Lebesgue measure $\mu$,
where $\gamma \cdot x(\delta) = x(\gamma^{-1}\delta)$.  Denote by
$G(\Gamma,S)$ the graph associated with the Bernoulli shift and generating
set $S$.
For convenience we sometimes work instead with the shift action of $\Gamma$ on $[0,1]^E$, where $E$ is the edge set of the (right) Cayley graph $\Cay(\Gamma,S)$ (and as usual $\Gamma$ acts by left translation on the Cayley graph).  We denote the corresponding graph on $[0,1]^E$ by $G'(\Gamma,S)$.  Since the shift action on $[0,1]^E$ is measure-theoretically isomorphic to the Bernoulli shift on $[0,1]^\Gamma$, we lose nothing by working with $G'(\Gamma,S)$ rather than $G(\Gamma,S)$.

We may use each $x \in [0,1]^E$ to label the edges of its connected
component in $G'(\Gamma,S)$, assigning $(\gamma \cdot x, s\gamma \cdot x)$
the label $x(\gamma^{-1}, \gamma^{-1} s^{-1})$.  The structure of the
action ensures that this labeling is independent of the particular choice
of $x$, and in particular this labeling is a Borel function from
$G'(\Gamma,S)$ to $[0,1]$.  Following \cite{LPS} we obtain the \define{wired minimal spanning forest}, $\wmsf(G'(\Gamma,S))$, by deleting those edges from $G'(\Gamma,S)$ which receive a label which is maximal in some simple cycle or bi-infinite path.  By construction, $\wmsf(G'(\Gamma,S))$ is acyclic.

\begin{thm}[Lyons-Peres-Schramm]\label{theorem:LPS}
  Suppose that $\Gamma$ is a nonamenable group with finite symmetric generating set $S$, and consider the graph $G'(\Gamma,S)$ defined above.  There is a conull, $G'(\Gamma,S)$-invariant Borel set $B \subset [0,1]^E$ on which each connected component of $\wmsf(G'(\Gamma,S))$ has one end.
\end{thm}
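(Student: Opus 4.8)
The plan is to reduce the theorem to two facts about spanning forests of $G'(\Gamma,S)$ — that the $\wmsf$ has no finite components and that it is dominated by a forest all of whose trees are one-ended — and to combine these with an elementary observation about subtrees of one-ended trees. Throughout I work with the $G'(\Gamma,S)$-invariant Borel event
\[
O = \{x \in [0,1]^E : \text{every component of } \wmsf(G'(\Gamma,S)) \text{ has exactly one end}\}.
\]
Since $\wmsf(G'(\Gamma,S))$ is a Borel function of the labeling $x$, the set $O$ is Borel, and it is manifestly invariant under the shift. The shift action on $[0,1]^E$ is mixing, hence ergodic, so $O$ has measure $0$ or $1$; it therefore suffices to show $\mu(O) > 0$, and we may then take $B = O$.

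The elementary observation is this: a connected, infinite subtree $C$ of a one-ended tree $T$ is itself one-ended. Indeed, a tree has at least two ends if and only if it contains a bi-infinite simple path, and any such path in $C$ would also be a path in $T$, contradicting that $T$ is one-ended; as $C$ is infinite and acyclic (being a subgraph of the acyclic $\wmsf$), it has exactly one end. Thus it is enough to exhibit, on a conull set, a spanning forest of $G'(\Gamma,S)$ whose trees are almost surely one-ended and which contains $\wmsf(G'(\Gamma,S))$ as a subgraph, together with the fact that every $\wmsf$-component is infinite.

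For the containment I would invoke the stochastic domination of the $\wmsf$ by the wired uniform spanning forest $\mathsf{WUSF}$ established by Lyons--Peres--Schramm, together with Strassen's theorem, to produce a monotone coupling in which $\wmsf(G'(\Gamma,S)) \subseteq \mathsf{WUSF}$ as edge sets almost surely. On the nonamenable group $\Gamma$ the simple random walk is transient with spectral radius strictly less than one, and the Benjamini--Lyons--Peres--Schramm analysis of the $\mathsf{WUSF}$ shows that almost surely every one of its trees has exactly one end. Granting also that the $\wmsf$ has no finite components — a structural fact about minimal spanning forests of infinite connected graphs, again due to Lyons--Peres--Schramm — each $\wmsf$-component, being connected, lies in a single $\mathsf{WUSF}$-tree, and so is an infinite connected subtree of a one-ended tree, hence one-ended by the observation above. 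This shows $\mu(O)=1$ and completes the argument.

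I expect the main obstacle to be the domination step, that is, the exclusion of two-ended components. This cannot be done by the deletion rule alone: a bi-infinite path can survive in the $\wmsf$ provided its labels increase to their supremum at both ends without attaining it, so the combinatorics of ``maximal in a cycle or bi-infinite path'' does not by itself forbid lines. It is precisely the comparison with the $\mathsf{WUSF}$ — and through it the input of nonamenability, via transience and the positive spectral gap of the random walk — that rules out these configurations. The one-endedness of $\mathsf{WUSF}$ trees and the stochastic domination are the genuinely probabilistic ingredients, and everything else is soft.
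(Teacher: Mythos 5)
Your peripheral ingredients are all correct: the ergodicity reduction, the fact that every component of $\wmsf(G'(\Gamma,S))$ is infinite, and the observation that an infinite connected subtree of a one-ended tree is one-ended. But the step carrying all the weight---the claim that Lyons--Peres--Schramm established stochastic domination of $\wmsf$ by the wired uniform spanning forest---is a citation to a theorem that does not exist, and indeed cannot exist. On $\Z^2$ the wired and free versions of both forests coincide, and both the minimal spanning forest (Alexander) and the uniform spanning tree (Pemantle) are single spanning trees with distinct laws; since a spanning tree contained in a spanning tree must equal it, neither can stochastically dominate the other, so there is no general domination theorem to quote, and Strassen's theorem has no distributional inequality to convert into a coupling. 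Worse, in your setting the domination claim is not a ``soft'' transfer of one-endedness but is equivalent to asserting that the two forests are the \emph{same} process: removing any edge from a one-ended tree leaves exactly one infinite side, so a spanning subforest of a one-ended tree all of whose components are infinite must contain every edge of that tree. Combining this with facts you already invoke ($\wmsf$ has no finite components, every $\mathrm{WUSF}$ tree is one-ended on a nonamenable group), your hypothesized coupling $\wmsf \subseteq \mathrm{WUSF}$ forces $\wmsf = \mathrm{WUSF}$ almost surely---a statement far stronger than anything in the literature, and false: minimal and uniform spanning forests have genuinely different laws.

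The actual proof requires no comparison between the two forests. Lyons--Peres--Schramm \cite{LPS}*{Theorem 3.12} prove \emph{directly} that all components of the $\wmsf$ are one-ended almost surely whenever critical Bernoulli percolation on the underlying Cayley graph has no infinite clusters, i.e.\ $\theta(p_c)=0$; and nonamenability enters only through the theorem of Benjamini--Lyons--Peres--Schramm \cite{BLPS:nonamenable}*{Theorem 1.1} that critical percolation on any nonamenable Cayley graph has no infinite clusters. So the probabilistic input is critical percolation, not transience or the spectral radius of the random walk. The $\mathrm{WUSF}$ is indeed relevant to this circle of ideas---see the remark following Corollary~\ref{cor:randdcolor}, where for groups of more than linear growth one uses the wired uniform spanning forest \emph{in place of} the $\wmsf$ to get one-ended forests as factors of IID---but it serves there as a substitute for the minimal spanning forest, not as an envelope dominating it.
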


\begin{proof}
  See \cite{LPS}*{Theorem 3.12}, which says $\wmsf(G'(\Gamma,S))$ is almost surely
  one-ended provided the Cayley graph of $\Gamma$ has no infinite clusters
  at critical percolation. This holds for nonamenable Cayley graphs by
  \cite{BLPS:nonamenable}*{Theorem 1.1}.
\end{proof}

Let $\Aut_{\Gamma,S}$ be the automorphism group of the Cayley graph
$\Cay(\Gamma,S)$. Given a group $\Gamma$ with generating set $S$ and a
natural number $k$, we may view the space $\Col(\Gamma,S,k)$ of
$k$-colorings of the (right) Cayley graph $\Cay(\Gamma,S)$ as a closed
(thus Polish) subset of $k^\Gamma$.  The action of $\Gamma$ by left
translations on $\Cay(\Gamma,S)$ induces an action on $\Col(\Gamma,S,k)$.
An \define{automorphism-invariant random $k$-coloring} of $\Cay(\Gamma,S)$
is a Borel probability measure on $\Col(\Gamma,S,k)$ invariant under this
$\Aut_{\Gamma,S}$ action. Such a random $k$-coloring is a \define{factor of
IID} if
it is a factor of the Bernoulli shift of $\Aut_{\Gamma,S}$ on
$[0,1]^{\Gamma}$. That is, letting $\lambda$ be Lebesgue measure on
$[0,1]$, a random $k$-coloring $\nu$ is a factor of IID if there is a
$\mu$-measurable equivariant function $f \from [0,1]^\Gamma \to k^\Gamma$
such that $\nu$ is the pushforward of the product measure $\mu^\Gamma$
under $f$.

In Section 5 of \cite{LN} it is asked for which $k$ can
automorphism-invariant random $k$-colorings of Cayley graphs be attained as
IID factors (see also \cite{AL}*{Question 10.5}). In \cite{CKT}*{Corollary 6.4}
translation-invariant random $d$-colorings of Cayley graphs are
constructed, where as usual $d$ is the degree of the graph, but this involves passing to actions weakly equivalent to the
Bernoulli shift (or alternatively taking weak limits of IID factors). We
can now strengthen this result, giving $d$-colorings as IID factors except in the cases
$\Z$ and $(\Z/2\Z) * (\Z/2\Z)$ where there is the usual ergodic-theoretic
obstruction.

\begin{corollary}\label{cor:randdcolor}
  Suppose that $\Gamma$ is a countable group not isomorphic to $\Z$ or
  $(\Z/2\Z) * (\Z/2Z)$, and suppose that $S$ is a finite symmetric
  generating set for $\Gamma$ with $|S| = d$.  Then there is an
  automorphism-invariant random $d$-coloring of $\Cay(\Gamma,S)$ which is an IID factor.
\end{corollary}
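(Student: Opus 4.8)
The plan is to obtain the coloring as an $\Aut_{\Gamma,S}$-equivariant Borel factor of the edge-labelling, working with the graph $G'(\Gamma,S)$ on $[0,1]^E$, which is $d$-regular (as $|S|=d$) and almost all of whose components are isomorphic to $\Cay(\Gamma,S)$. I may assume $\Gamma$ is infinite, so these components are infinite. The mechanism converting a proper Borel $d$-colouring $c$ of $G'(\Gamma,S)$ into a factor of IID coloring is to read $c$ along each orbit, setting $F(x)(\gamma)=c(\gamma\cdot x)$; since adjacent vertices $\gamma,s\gamma$ of $\Cay(\Gamma,S)$ correspond to the $G'(\Gamma,S)$-edge between $\gamma\cdot x$ and $s\gamma\cdot x$, this $F$ lands in $\Col(\Gamma,S,d)$, and if $c$ was built using only $\Aut_{\Gamma,S}$-invariant data then $F$ is equivariant and pushes the product measure to the desired automorphism-invariant random coloring. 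The crucial point making this work is that almost surely the labels $x(e)$ are pairwise distinct on each component; they therefore induce an $\Aut_{\Gamma,S}$-equivariant Borel linear ordering of finite tuples of vertices, which I would substitute for every fixed Borel linear ordering, every Lusin--Novikov selection, and every Borel $\N$-colouring used in the results of Section~\ref{section:subforests} and in the proof of Theorem~\ref{no_gallai}, thereby rendering each of those constructions equivariant. It thus suffices to produce a proper Borel $d$-colouring of $G'(\Gamma,S)$ by such equivariant means.

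Here I would split on whether $\Cay(\Gamma,S)$ is a Gallai tree. If it is not, then no component of $G'(\Gamma,S)$ is a Gallai tree, so Theorem~\ref{thm:Borel_Gallai} (via its proof, Theorem~\ref{no_gallai}) shows $G'(\Gamma,S)$ is Borel degree-list-colorable; colouring from the constant lists $\{1,\dots,d\}$ and running the argument equivariantly gives the required $d$-colouring. If $\Cay(\Gamma,S)$ is a Gallai tree, then being an infinite, locally finite, vertex-transitive Gallai tree it has either exactly two ends or infinitely many ends: in the two-ended case it is a bi-infinite line, forcing $|S|=2$ and $\Gamma\cong\Z$ or $\Gamma\cong(\Z/2\Z)*(\Z/2\Z)$, which are excluded. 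Hence $\Cay(\Gamma,S)$ has infinitely many ends; since amenable groups have at most two ends, $\Gamma$ is nonamenable, and Theorem~\ref{theorem:LPS} applies. On a conull invariant Borel set the wired minimal spanning forest $\wmsf(G'(\Gamma,S))$ is one-ended, and it is already a factor of the edge labels. Orienting each vertex toward the unique end of its $\wmsf$-component produces a one-ended function contained in $G'(\Gamma,S)$ (backward orbits are finite by K\"onig's lemma, as an infinite one would yield a second end), and Corollary~\ref{cor:oneend} then gives an equivariant degree-list-, hence $d$-, colouring.

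The principal obstacle is not any single new construction but the uniform bookkeeping required to make the entire chain of cited Borel arguments genuinely $\Aut_{\Gamma,S}$-equivariant: Proposition~\ref{prop:pointaway}, Lemma~\ref{lem:partial_dlc}, Proposition~\ref{prop:KST_dlc}, Theorem~\ref{no_gallai}, and Corollary~\ref{cor:oneend} are stated only Borel-measurably and tacitly invoke fixed orderings and measurable selections, and one must check at each step that the invariant edge labels suffice to replay the construction equivariantly. The second delicate point is the structural dichotomy used above: that an infinite locally finite vertex-transitive Gallai tree is either a line or has infinitely many ends. This is precisely where the two exceptional groups enter---matching the $d=2$ obstruction recorded after the previous corollary---since it is exactly this input that lets Theorem~\ref{theorem:LPS} cover the Gallai-tree components that the degree-list-colouring of Theorem~\ref{no_gallai} cannot reach.
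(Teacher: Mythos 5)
Your proposal is correct in outline but follows a genuinely different decomposition from the paper's. The paper splits on amenability: for amenable $\Gamma$ (which, being infinite, has one or two ends) it simply cites \cite{CKT}*{Theorem 6.7}, and for nonamenable $\Gamma$ it runs exactly your second branch --- Theorem~\ref{theorem:LPS} plus Corollary~\ref{cor:oneend} applied to $\wmsf(G'(\Gamma,S))$ --- declaring the resulting coloring to be an automorphism-invariant IID factor ``by construction.'' You instead split on whether $\Cay(\Gamma,S)$ is a Gallai tree: if it is not, you invoke Theorem~\ref{no_gallai} run equivariantly, which covers every amenable group in scope \emph{and} every nonamenable group with a non-Gallai-tree Cayley graph with no probabilistic input at all; if it is, you argue the graph has two or infinitely many ends, dispose of the two-ended case (a line, forcing the excluded groups), and use nonamenability to reach Theorem~\ref{theorem:LPS}. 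What your route buys: it is self-contained within the paper (no appeal to \cite{CKT}), and it confines the probabilistic machinery of \cite{LPS} and \cite{BLPS:nonamenable} to the genuinely tree-like (essentially virtually free) case. What it costs: you must make the heavier machinery behind Theorem~\ref{no_gallai} (intersection-graph $\N$-colorings, Lusin--Novikov selections, Proposition~\ref{prop:KST_dlc}, Lemma~\ref{lem:partial_dlc}) equivariant, whereas the paper needs this only for the lighter chain behind Corollary~\ref{cor:oneend}; and you need a structural fact about vertex-transitive Gallai trees that the paper's amenable/nonamenable split avoids entirely. Your explicitness about the equivariance issue is, if anything, more honest than the paper's ``by construction.''

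Two steps should be shored up. First, a label-induced linear ordering alone does not literally suffice to equivariantize the cited constructions: the greedy arguments (the partition into independent sets in Proposition~\ref{prop:KST_dlc}, the $\N$-coloring $c_I$ and the selection of $A'$ in Theorem~\ref{no_gallai}) must be replaced by label-driven local-extremum iterations, and one must verify that almost surely every vertex, or every candidate finite set, is eventually processed; this uses that IID labels admit no infinite monotone paths in a bounded-degree graph (a Borel--Cantelli estimate), as in \cite{LN}. Second, your dichotomy --- that an infinite, locally finite, vertex-transitive Gallai tree has two or infinitely many ends --- is true but unproved in your sketch; for Cayley graphs one can argue that all blocks have uniformly bounded size, so $\Gamma$ acts cocompactly with finite stabilizers on the block-cut tree, hence is virtually free, hence has two or infinitely many ends, and the two-ended regular Gallai trees are bi-infinite lines (as noted in the paper's proof of Theorem~\ref{thm:mb}), which occur only for the two excluded groups. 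Finally, note that like the paper you must tacitly assume $\Gamma$ is infinite: the statement as written fails for finite groups (e.g.\ $\Z/(d+1)$ with $S$ the set of all non-identity elements gives a complete graph with no $d$-coloring at all), so your opening ``I may assume $\Gamma$ is infinite'' is matching the paper's implicit scope rather than a harmless reduction.
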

\begin{proof}
  In the case that $\Gamma$ is amenable, it has finitely many ends, and so
  we can apply \cite{CKT}*{Theorem 6.7}. Otherwise, $\Gamma$ is
  nonamenable and we can apply Corollary \ref{cor:oneend} to obtain from $\wmsf(G'(\Gamma,S))$ a Borel $d$-coloring $c \from B \to d$ of the restriction of $G(\Gamma,S)$ to the conull set $B \subset [0,1]^\Gamma$ on which $\wmsf(G'(\Gamma,S))$ has one end.  Define $\pi \from B \to \Col(\Gamma,S,d)$ by $(\pi(x))(\gamma) = c(\gamma^{-1} \cdot x)$.  Then $\pi_* \mu$ is a automorphism-invariant random $d$-coloring which is a factor of IID by construction, where as usual $\pi_* \mu(A) = \mu(\pi^{-1}(A))$.
\end{proof}

\begin{remark}
  Russ Lyons (private communication) points out that this method of proof
  using spanning forests works for finitely generated groups of
  more than linear growth by using instead the wired \define{uniform} spanning
  forest (WUSF); see Section 10 of \cite{BLPS:uniform}.  The realization of
  the WUSF as a factor of IID follows from Wilson's algorithm rooted at
  infinity (see \cite{GL}*{Proof of Proposition 9}) in the transient case and Pemantle's strong F{\o}lner independence \cite{Pemantle} in the amenable case.
\end{remark}

\section{The case $d=2$}\label{sec:d=2}

In this section, we prove Theorem~\ref{thm:mb2}, giving a measurable
analogue of Brooks's theorem for the case $d = 2$.

Given a graph $G$ on $X$, let the equivalence relation $\evenG$ be the
equivalence relation on $X$ where $x \evenG y$ if $x$ and $y$ are connected
by a path of even length in $G$. Then in the case where $X$ is finite, we
can rephrase the existence of an odd cycle in the following way: there is a
nonempty $G$-invariant subset $A$ of $X$ such that every nonempty
$\evenG$-invariant subset of $A$ is $G$-invariant.

Now, in the measurable context, even without the presence of odd cycles,
there are Borel graphs $G$ and measures $\mu$ for which every $\evenG$-invariant Borel set differs by a
nullset from a Borel $G$-invariant set.
For example, the Borel graph $G_S = \{ (x,y)\in \T ^2 : S(x) = y \mathor S(y)
= x\}$ induced by an irrational rotation $S:\T \rightarrow \T$ of the unit
circle is $2$-regular and acyclic, and since $S^2$ is ergodic with respect
to Lebesgue measure, every non-null $\evenG$-invariant Borel set
is Lebesgue conull. It follows that $G_S$ does not admit a $\mu _\T$-a.e.\ Borel
$2$-coloring, as the color sets in a measurable $2$-coloring would have to
be disjoint, $\evenG$-invariant, and
non-null since $G_S$ is induced by a measure preserving transformation and
is hence quasi-invariant. Likewise, there is no Baire measurable $2$-coloring of $G_S$
with respect to the usual topology on $\T$ since every non-meager
$\evenG$-invariant Borel set of vertices in $G_S$ is comeager.

If we regard the phenomenon described above as generalization of
possessing on odd cycle, then we have the following generalization of
Brooks's theorem in the case $d = 2$:

\begin{thm}
  Suppose $G$ is a Borel graph on a standard Borel space $X$ with vertex
  degree bounded by $d = 2$ such that $G$ contains no odd cycles. Let
  $\evenG$ be the equivalence relation on $X$ where $x \evenG y$ if $x$ and
  $y$ are connected by a path of even length in $G$.
  \begin{enumerate}
  \item Let $\mu$ be a $G$-quasi-invariant Borel probability measure on
  $X$. Then $G$ admits a $\mu$-measurable $2$-coloring if and only if there does not
  exist a non-null $G$-invariant Borel set $A$ such that every
  $\evenG$-invariant Borel subset of $A$ differs from a $G$-invariant set by a
  nullset.

  \item Let $\tau$ be a $G$-quasi-invariant Polish topology
  compatible with the Borel structure on $X$. Then $G$ admits a
  Baire measurable $2$-coloring if and only if there does not exist a non-meager
  $G$-invariant Borel set $A$ so that every $\evenG$-invariant Borel subset
  of $A$ differs from a $G$-invariant set by a meager set.
  \end{enumerate}
\end{thm}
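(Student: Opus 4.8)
The plan is to reformulate a proper $2$-coloring as a choice, in each connected component, of one of the two sides of its (unique) bipartition, and to isolate the bi-infinite line components as the only place where this choice can fail to be made measurably. First I would record the structure of $G$: since every vertex has degree at most $2$ and $G$ has no odd cycles, each component is a finite path, an even cycle, a singleton, a one-ended ray, or a bi-infinite line. On each component with at least one edge the relation $\evenG$ has exactly two classes, namely the two sides of the bipartition, and a proper $2$-coloring of that component is precisely a labeling of these two $\evenG$-classes by $0$ and $1$. I would dispose of the non-line components directly and Borel-measurably: finite components carry a smooth $\EG$, so a Borel transversal exists and I color by the parity of the $G$-distance to the chosen base point, while a ray has a unique degree-$1$ endpoint, so I color by parity of distance to it. Singletons (isolated vertices) are colored arbitrarily; since for them $\evenG = \EG = {}$equality they furnish no genuine obstruction, and I would note that we may assume $G$ has no isolated vertices. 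After removing all of these via Proposition~\ref{prop:measurable_equiv} and quasi-invariance, I may assume every component of $G$ is a bi-infinite line. It is convenient to record the reformulation in cocycle terms: $\sigma(x,y) = d_G(x,y) \bmod 2$ is a well-defined Borel $\Z/2\Z$-cocycle on $\EG$ (well-defined because $G$ is bipartite), $\evenG = \{(x,y) : \sigma(x,y) = 0\}$, and a $2$-coloring is exactly a Borel trivialization $c$ with $c(x) - c(y) = \sigma(x,y)$; thus the theorem asserts that the only obstruction to trivializing $\sigma$ is the ``ergodic'' one.

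For the direction ``a coloring exists $\implies$ no witness $A$'' I argue by contradiction. Given a $\mu$-measurable $2$-coloring $c$, after passing to a conull invariant Borel set on which $c$ is Borel, the set $V = c^{-1}(0)$ is a Borel $\evenG$-invariant set meeting each line in exactly one $\evenG$-class. If a non-null $G$-invariant $A$ witnessed the obstruction, then $V \inters A$ is $\evenG$-invariant and hence equal modulo null to a $G$-invariant set $D$; but on every line $V$ is one bipartition side while $D$ is all-or-nothing, so the symmetric difference of $V \inters A$ and $D$ contains one full $\evenG$-class from each line of $A$, forcing $A$ to be null. The identical computation with meager in place of null handles the Baire category case.

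The substantive direction is to produce a witness $A$ when no measurable coloring exists, and here I would use an exhaustion argument rather than an explicit ergodic decomposition. Let $\mathcal{F}$ be the family of $G$-invariant Borel sets $W$ such that $G \restrict W$ admits a Borel $2$-coloring after discarding a null (resp.\ meager) invariant set; $\mathcal{F}$ is closed under countable unions, since colorings of disjoint invariant pieces may be amalgamated (discarding the countable union of the exceptional sets). In the measure case I take $W^*$ to be a member of $\mathcal{F}$ of maximal measure; in the category case I take $W^*$ to be the supremum of $\mathcal{F}$ in the ccc complete Boolean algebra of Borel sets modulo meager, which by countable-union closure is again realized by a genuinely invariant member of $\mathcal{F}$. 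Setting $A = X \setminus W^*$, if $A$ is null (resp.\ meager) then $G$ is colorable, contrary to hypothesis, so $A$ is a non-null (resp.\ non-meager) $G$-invariant set. To see that $A$ is a witness, suppose some Borel $\evenG$-invariant $V \subset A$ were \emph{not} equal modulo null (resp.\ meager) to any $G$-invariant set. Let $A_1$ be the Borel $G$-invariant union of those components of $A$ on which $V$ restricts to exactly one $\evenG$-class. On $A \setminus A_1$ the set $V$ is all-or-nothing per component, hence $G$-invariant, so if $A_1$ were null (resp.\ meager) then $V$ would agree modulo null (resp.\ meager) with a $G$-invariant set, a contradiction; thus $A_1$ is non-null (resp.\ non-meager). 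But on $A_1$ the rule ``$0$ on $V$, $1$ off $V$'' is a genuine Borel $2$-coloring, so $A_1 \in \mathcal{F}$, and $W^* \disjointunion A_1$ contradicts the maximality of $W^*$. Hence no such $V$ exists and $A$ is a witness of the stated form.

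The main obstacle, and the point requiring the most care, is the maximality step in the category case: with no measure to drive an exhaustion I rely on the fact that the algebra of Borel sets modulo meager is a ccc complete Boolean algebra, so that a maximal colorable invariant set exists and is itself a countable union of members of $\mathcal{F}$, and I need $G$-quasi-invariance of $\tau$ throughout so that discarded meager sets may be enlarged to meager invariant sets and so that Proposition~\ref{prop:measurable_equiv} applies. A secondary subtlety is the bookkeeping around the degenerate isolated-vertex components, which satisfy the obstruction condition vacuously yet do not impede coloring; handling these cleanly is what motivates removing them at the outset.
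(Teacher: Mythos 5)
Your proposal is correct and takes essentially the same route as the paper: your forward direction is the paper's argument (color classes are $\evenG$-invariant, so intersecting one with a witness $A$ and invoking quasi-invariance forces $A$ to be null/meager), and your maximal-colorable-set argument is just the contrapositive packaging of the paper's measure-theoretic exhaustion, whose key step---given an $\evenG$-invariant $C$ that is not nearly $G$-invariant, pass to $C_0 = \{x \in C : [x]_{\EG} \setminus C \neq \emptyset\}$ and $C_1 = [C_0]_{\EG} \setminus C_0$ to get a genuine Borel $2$-coloring of $G \restriction [C_0]_{\EG}$---is exactly your $A_1$ step. Your preliminary reduction to bi-infinite lines and the cocycle framing are harmless extras the paper does without, and your explicit exclusion of isolated vertices is, if anything, more careful than the paper, whose stated equivalence is degenerate on a non-null set of singleton components.
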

\begin{proof}
We prove just part 1, since the proof of 2 is similar. Assume first that $G$
admits a $\mu$-measurable $2$-coloring with colors sets $C_0$ and $C_1$.
Now $C_0$ and $C_1$ must both be non-null, since $\mu$ is
$G$-quasi-invariant. However, if $A$ is a non-null $G$-invariant Borel set,
then $A \inters C_0$ is $\evenG$-invariant, however $A \inters C_0$ cannot
differ from a $G$-invariant Borel set by a nullset since $\mu$ is
$G$-quasi-invariant.

For the converse, assume that for every $\mu$-measurable non-null
$G$-invariant set $A$ we can find a $\mu$-measurable $\evenG$-invariant
$C \subset A$ which is not within a null-set of being $G$-invariant.
Then the sets $C_0 = \{ x \in C : [x]_{\EG} \setminus C
\neq \emptyset \}$ and $C_1 = [C_0]_{\EG}\setminus C_0$ are non-null, and
they determine a $\mu$-measurable $2$-coloring of $G\restriction
[C_0]_{\EG}$. We may continue this process on $X\setminus [C_0]_{\EG}$,
and by measure theoretic exhaustion we can obtain a $\mu$-measurable
$2$-coloring $c:Y \rightarrow \{ 0, 1 \}$ of $G\restriction Y$ for some
$G$-invariant conull $Y\subset X$.
\end{proof}

\begin{appendices}
\section{Borel vs measurable colorings}\label{sec:Borel_vs_measurable}

Let $(\Z/2\Z)^{*d}$ be the $d$-fold free product of the group $\Z/2Z$. This
group acts via the left shift action on the standard Borel space
$\N^{(\Z/2\Z)^{*d}}$. Let $X = \{x \in
\N^{(\Z/2\Z)^{*d}} : \text{ $\gamma \cdot x \neq x$ for all nonidentity
$\gamma \in (\Z/2\Z)^{*d}$}\}$ be the free part of this action. Let
$G((\Z/2\Z)^{*d},\N)$ be the Borel graph on $X$ where $x$ is adjacent to
$y$ if there is a generator $\gamma$ of $(\Z/2\Z)^{*d}$ such that $\gamma
\cdot x = y$ or $\gamma \cdot y = x$. Note this graph is acyclic and
$d$-regular. As discussed in the introduction,
Marks~\cite{Ma}*{Theorem 1.2} has shown that this graph has no Borel
$d$-coloring. However, our Theorem~\ref{thm:mb} shows that for every $d
\geq 3$, there is a $\mu$-measurable and Baire measurable $d$-coloring of
$G((\Z/2\Z)^{*d},\N)$ with respect to any Borel probability measure or
compatible Polish topology on $X$.

Hence, for all finite $d \geq 3$, for Borel graphs $G$,
admitting $\mu$-measurable $d$-coloring with respect to every Borel
probability measure is a strictly weaker notion than admitting a Borel
coloring, as is admitting a Baire measurable $d$-coloring with respect to
every compatible Polish topology, as witnessed by these explicit graphs given
above. In this appendix, we show that for $2$-colorings these notions
are the same, even without any degree assumptions on $G$.

\begin{proposition}\label{prop:2color}
Let $G$ be a locally countable Borel graph on a standard Borel space $X$. Then the following are equivalent:
\begin{enumerate}
\item $G$ admits a Borel $2$-coloring.
\item For every Borel probability measure $\mu$ on $X$, $G$ admits a $\mu$-measurable $2$-coloring.
\item For every compatible Polish topology $\tau$ on $X$, $G$ admits a Baire measurable $2$-coloring.
\end{enumerate}
\end{proposition}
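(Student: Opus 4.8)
The implications $(1) \Rightarrow (2)$ and $(1) \Rightarrow (3)$ are trivial, since every Borel coloring is automatically $\mu$-measurable and Baire measurable. The content is in the reverse directions, and the plan is to prove the contrapositive: if $G$ has \emph{no} Borel $2$-coloring, then I will exhibit a specific measure $\mu$ (respectively topology $\tau$) for which $G$ has no $\mu$-measurable (respectively Baire measurable) $2$-coloring. The key structural fact is that a graph admits a $2$-coloring if and only if it is bipartite, i.e.\ contains no odd cycle. So the failure of a Borel $2$-coloring has two possible sources: either $G$ contains an odd cycle somewhere, or $G$ is bipartite (in the combinatorial sense) but the bipartition cannot be chosen in a Borel way. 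The first case is an outright obstruction even to an abstract $2$-coloring, so the interesting case is when $G$ is genuinely bipartite but not \emph{Borel} bipartite.

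The natural approach is to analyze $\evenG$, the even-connectedness relation, exactly as in the $d = 2$ results of Section~\ref{sec:d=2}. When $G$ has no odd cycle, $\evenG$ refines $\EG$ with index $1$ or $2$ on each component: a component is $2$-colorable, and the two color classes are precisely the two $\evenG$-classes inside that $\EG$-class. Thus a Borel $2$-coloring of $G$ amounts to a Borel choice, on each bipartite component, of which $\evenG$-class gets color $0$. The obstruction to doing this in a Borel way is a \emph{non-smoothness} phenomenon: the failure to Borel-select one of two $\evenG$-classes per component. I would argue that if no Borel $2$-coloring exists, then this selection problem is non-smooth on a non-trivial invariant Borel set, and then invoke the standard dichotomy machinery (in the spirit of the Feldman--Moore and ergodic-decomposition arguments used throughout the paper) to produce a measure making the relevant invariant sets neither null nor conull, or a topology making them neither meager nor comeager. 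Concretely, the index-$2$ structure means the map swapping the two $\evenG$-classes within each $\EG$-class behaves like a free $\Z/2\Z$-action on the quotient; non-smoothness of this action is exactly what lets me build, via a standard argument, a quasi-invariant ergodic measure (or a Baire-category analogue) under which any measurable transversal would have to be both invariant and have measure $1/2$, a contradiction with ergodicity.

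More precisely, here is the route I expect to take. Assume $G$ has no Borel $2$-coloring. First dispose of the odd-cycle case: if $G$ has an odd cycle, it has no $2$-coloring at all, contradicting the hypothesis implicit in $(2)$ and $(3)$ that a $2$-coloring exists — so in fact under $(2)$ or $(3)$ we may assume $G$ is bipartite and the whole difficulty is Borelness. So assume $G$ is bipartite. Consider the Borel equivalence relation $\evenG \subseteq \EG$ of index at most two. A Borel $2$-coloring exists iff there is a Borel transversal selecting one $\evenG$-class from each non-trivial ($\EG$-over-$\evenG$) pair, which is a smoothness statement for the quotient $\Z/2\Z$-action. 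Since no Borel $2$-coloring exists, this action is non-smooth on an invariant Borel set $Y$. On $Y$ I then use the Glimm--Effros-type dichotomy: non-smoothness yields a continuous embedding of $E_0$, hence a quasi-invariant measure $\mu$ (for part $(2)$) for which the color-swap is ergodic, and a topology $\tau$ (for part $(3)$) for which it is generically ergodic. Under such $\mu$, any $\mu$-measurable $2$-coloring would give a measurable $\evenG$-invariant set meeting each component in exactly one color class; ergodicity of the swap forces this set to be null or conull, but quasi-invariance forces both color classes to be non-null — a contradiction. The Baire-category argument is identical with ``null/conull'' replaced by ``meager/comeager'' and ergodicity by generic ergodicity.

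The main obstacle is the passage from ``no Borel $2$-coloring'' to ``non-smoothness of the swap relation, together with a \emph{single} measure or topology witnessing non-$\mu$-measurability.'' Statement $(2)$ quantifies over all measures and $(3)$ over all topologies, so I need only produce \emph{one} bad measure and \emph{one} bad topology, which is exactly what the Glimm--Effros / Becker--Kechris dichotomies supply: non-smoothness is equivalent to a Borel reduction of $E_0$, and $E_0$ carries its canonical ergodic measure and generically ergodic topology. The delicate point to get right is that the two $\evenG$-classes within an $\EG$-class may \emph{coincide} (when $\evenG = \EG$ on that component, which happens exactly when the component contains an odd cycle), so I must restrict attention to the invariant Borel set where the index is genuinely two, verify that a Borel $2$-coloring on the complement always exists, and check that non-smoothness survives on the index-two part. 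Once the dichotomy is applied there, the quasi-invariance hypothesis baked into the definitions (and available by the reductions discussed after Proposition~\ref{prop:measurable_equiv}) closes the argument cleanly.
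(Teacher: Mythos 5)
Your trivial directions and the disposal of the odd-cycle case are fine, but the core step --- passing from ``no Borel $2$-coloring'' to a bad measure via Glimm--Effros / Harrington--Kechris--Louveau --- has a genuine gap. First, the object you want to call non-smooth, the ``swap'' of the two $\evenG$-classes in each $\EG$-class, is a $\Z/2\Z$-action on the quotient $X/\evenG$, which is not a standard Borel space (unless $\evenG$ is smooth, in which case a Borel $2$-coloring always exists, since Borel actions of finite groups are smooth); so there is no off-the-shelf smoothness notion or dichotomy to invoke for it. What you are really facing is a cohomological problem: a Borel $2$-coloring is exactly a Borel trivialization of the parity cocycle $\sigma(x,y) = d_G(x,y) \bmod 2$ on $\EG$, and its failure is \emph{not} equivalent to non-smoothness of any of the standard Borel equivalence relations in sight. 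Indeed, $\EG$ and $\evenG$ can both be non-smooth while $G$ has an obvious Borel $2$-coloring: take $X = 2^\N \times \{0,1\}$ and join $(x,i)$ to $(y,1-i)$ whenever $x \mathrel{E_0} y$; then color by the second coordinate. So ``no Borel $2$-coloring'' is strictly stronger than non-smoothness of $\EG$ or $\evenG$, and your claimed equivalence with a smoothness statement is where the argument breaks.

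Second, even if you extract an embedding of $E_0$ into $\EG$ from non-smoothness of $\EG$, that copy of $E_0$ need not respect parity: nothing prevents it from landing inside $\evenG$ (i.e.\ the pulled-back parity cocycle may be trivial), and then the pushforward of the uniform measure on $2^\N$ poses no obstruction at all to a measurable $2$-coloring --- the image could be colored monochromatically component by component. What you actually need is an $E_0$-copy that intertwines the coordinate-parity cocycle on $E_0$ with $\sigma$, and producing that is precisely the hard step. The paper (following Louveau) obtains it from the $G_0$-dichotomy \cite{KST}*{Theorem 6.6}, applied not to $G$ but to the odd-distance graph $G^{\mathrm{odd}} = \{(x,y) : d_G(x,y) \text{ is odd}\}$: one first shows that if $G$ has no Borel $2$-coloring then $G^{\mathrm{odd}}$ has no Borel $\N$-coloring (the complement of a maximal $G^{\mathrm{odd}}$-independent Borel set is again independent, by bipartiteness), and the dichotomy then yields an injective Borel homomorphism $f \from 2^\N \to X$ from $G_0$ to $G^{\mathrm{odd}}$. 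Such a homomorphism automatically respects parity, because parity is additive along paths in a bipartite graph; hence every Borel $G^{\mathrm{odd}}$-independent set is $f_*\nu$-null for the uniform measure $\nu$, and a Feldman--Moore quasi-invariantization of $f_*\nu$ gives the measure that defeats all $\mu$-measurable $2$-colorings (the Baire-measurable case is analogous). So your outline needs the $G_0$-dichotomy, or a parity-respecting dichotomy you would have to prove from scratch; the Glimm--Effros machinery you cite does not supply it.
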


\begin{proof}
This is actually a Corollary of a more general unpublished result of
Louveau (see \cite{M12}*{Theorem 15}). We sketch Louveau's
argument in this special case, which uses the 
$G_0$-dichotomy \cite{KST}*{Theorem 6.6}. We will
prove the equivalence of \emph{1.}\ and \emph{2.}, since the proof of the
equivalence of \emph{1.}\ and \emph{3.}\ is similar. It suffices to show
that if $G$ admits no Borel $2$-coloring then $G$ admits no
$\mu$-measurable $2$-coloring for some Borel probability measure $\mu$ on
$X$. If $G$ contains an odd cycle then it cannot be $2$-colored at all, so
we may assume that $G$ contains no odd cycles. Let $G^{\text{odd}} = \{
(x,y) \in X^2 :\, \text{d}_G (x,y) \text{ is odd} \}$, where
$\text{d}_G : X^2 \rightarrow \N \union \{ \infty \}$ denotes the graph
distance in $G$. Then $G^{\text{odd}}$ admits no Borel $\N$-coloring.
$\big($Otherwise, by \cite{KST}*{Proposition 4.2} there is a maximal $G^{\text{odd}}$-independent set $A\subset X$ which is Borel, and since $G$ contains no odd cycles the set $X\setminus A$ is $G^{\text{odd}}$-independent as well, which contradicts that $G$ admits no Borel $2$-coloring.\big)
It follows from \cite{KST}*{Theorem 6.6} that there is an injective Borel
homomorphism $f:2^\N \rightarrow X$ from the graph $G_0$ to
$G^{\text{odd}}$. Then $f$ is a homomorphism from $G_0^{{\text{odd}}} = \{
(u,v)\in (2^\N )^2 :\, \text{d}_{G_0}(u,v)\text{ is odd} \} = \{
(u,v)\in 2^\N  :\, u\text{ and }v\text{ differ on an odd number of
coordinates}\}$ to $G^{\text{odd}}$. Let $\nu$ denote the uniform product
measure on $2^\N$. Then every Borel $G_0^{\text{odd}}$-independent set is
$\nu$-null (see \cite{CK}*{Example 3.7}), hence every Borel
$G^{\text{odd}}$-independent set is $f_*\nu$-null. Fix by the Feldman-Moore
theorem \cite{KM}*{Theorem 1.3}
a sequence $(T_i)_{i \in \N}$ of Borel automorphisms such that
$\EG = \bigunion_i \graph(T_i)$ and let $\mu = \sum _i 2^{-i}(T_i)_*f_*\nu$, so that $\mu$ is a $G$-quasi-invariant probability measure with the same $G$-invariant null sets as $f_*\nu$. Suppose toward a contradiction that there is a $\mu$-measurable $2$-coloring of $G$. Then there is a Borel $2$-coloring $c:B\rightarrow \{ 0, 1\}$ of $G\restriction B$ for some $G$-invariant $\mu$-conull Borel subset $B \subset X$. Since $B$ is invariant, $c$ is a $2$-coloring of $G^{\text{odd}}\restriction B$, and since $B$ is $\mu$-conull it is $f_*\nu$-conull, so either $c^{-1}(0)$ or $c^{-1}(1)$ is a Borel $G^{\text{odd}}$-independent set with positive $f_*\nu$-measure, a contradiction.
\end{proof}

\end{appendices}

\begin{bibdiv}
\begin{biblist}

\bib{AL}{article}{
      author={Aldous, D.},
      author={Lyons, R.},
       title={Processes on unimodular random networks},
        date={2007},
     journal={Electr. J. Probab.},
      volume={12},
       pages={1454\ndash 1508},
}

\bib{B}{article}{
      author={Borodin, O.V.},
       title={Criterion of chromaticity of a degree prescription (in Russian)},
        date={1977},
     journal={In {\it Abstracts of IV All-Union Conference on Theoretical
     Cybernetics (Novosibirsk)}},
       pages={127\ndash 128},
}

\bib{BLPS:nonamenable}{article}{
      author={Benjamini, I.},
      author={Lyons, R.},
      author={Peres, Y.},
      author={Schramm, O.},
       title={Critical percolation on any nonamenable group has no infinite
  clusters},
        date={1999},
     journal={Ann. Probab.},
      volume={27},
      number={3},
       pages={1347\ndash 1356},
}

\bib{BLPS:uniform}{article}{
      author={Benjamini, I.},
      author={Lyons, R.},
      author={Peres, Y.},
      author={Schramm, O.},
       title={Uniform spanning forests},
        date={2001},
        ISSN={0091-1798},
     journal={Ann. Probab.},
      volume={29},
      number={1},
       pages={1\ndash 65},
         url={http://dx.doi.org/10.1214/aop/1008956321},
      review={\MR{1825141 (2003a:60015)}},
}

\bib{CKT}{article}{
      author={Conley, Clinton~T.},
      author={Kechris, Alexander~S.},
      author={Tucker-Drob, Robin~D.},
       title={Ultraproducts of measure preserving actions and graph
  combinatorics},
        date={2013},
        ISSN={0143-3857},
     journal={Ergodic Theory Dynam. Systems},
      volume={33},
      number={2},
       pages={334\ndash 374},
         url={http://dx.doi.org/10.1017/S0143385711001143},
      review={\MR{3035288}},
}

\bib{CK}{article}{
      author={Conley, C.T.},
      author={Kechris, A.S.},
       title={Measurable chromatic and independence numbers for ergodic graphs
  and group actions},
        date={2013},
        ISSN={1661-7207},
     journal={Groups Geom. Dyn.},
      volume={7},
      number={1},
       pages={127\ndash 180},
         url={http://dx.doi.org/10.4171/GGD/179},
      review={\MR{3019078}},
}

\bib{CM2}{unpublished}{
      author={Conley, Clinton T.},
      author={Miller, Benjamin D.},
       title={A bound on measurable chromatic numbers of locally finite Borel graphs},
       note={To appear in Mathematical Research Letters},
}

\bib{CR}{article}{
      author={Cranston, D.W.},
      author={Rabern, L.},
       title={Brooks' theorem and beyond},
        date={2014},
        note={Preprint available at http://arxiv.org/abs/1403.0479},
}

\bib{D}{book}{
      author={Diestel, Reinhard},
       title={Graph theory},
     edition={Third},
      series={Graduate Texts in Mathematics},
   publisher={Springer-Verlag},
     address={Berlin},
        date={2005},
      volume={173},
        ISBN={978-3-540-26182-7; 3-540-26182-6; 978-3-540-26183-4},
      review={\MR{2159259 (2006e:05001)}},
}

\bib{ERT}{article}{
      author={Erd\H{o}s, P.},
      author={Rubin, A.},
      author={Taylor, H.},
       title={Choosability in graphs.},
        date={1979},
     journal={In {\it Proc. West Coast Conf. on Combinatorics, Graph Theory
     and Computing, Congressus Numerantium}},
      volume={26},
       pages={125\ndash 157},
}

\bib{GL}{article}{
      author={Gaboriau, D.},
      author={Lyons, R.},
       title={A measurable-group-theoretic solution to von {N}eumann's
  problem},
        date={2009},
        ISSN={0020-9910},
     journal={Invent. Math.},
      volume={177},
      number={3},
       pages={533\ndash 540},
         url={http://dx.doi.org/10.1007/s00222-009-0187-5},
      review={\MR{2534099 (2011a:37010)}},
}

\bib{HM}{article}{
      author={Hjorth, Greg},
      author={Miller, Benjamin~D.},
       title={Ends of graphed equivalence relations. II.},
        date={2009},
        ISSN={2167-5163},
     journal={Israel J. Math.},
      volume={169},
      number={3},
       pages={393\ndash 415},
         url={http://dx.doi.org/10.1007/s11856-009-0016-y},
      review={\MR{2460911 (2009j:03077)}},
}

\bib{JKL}{article}{
      author={Jackson, S.},
      author={Kechris, A.~S.},
      author={Louveau, A.},
       title={Countable {B}orel equivalence relations},
        date={2002},
        ISSN={0219-0613},
     journal={J. Math. Log.},
      volume={2},
      number={1},
       pages={1\ndash 80},
         url={http://dx.doi.org/10.1142/S0219061302000138},
      review={\MR{1900547 (2003f:03066)}},
}

\bib{K}{book}{
      author={Kechris, Alexander~S.},
       title={Classical descriptive set theory},
      series={Graduate Texts in Mathematics},
   publisher={Springer-Verlag},
     address={New York},
        date={1995},
      volume={156},
        ISBN={0-387-94374-9},
      review={\MR{1321597 (96e:03057)}},
}

\bib{KM}{book}{
      author={Kechris, Alexander~S.},
      author={Miller, Benjamin~D.},
       title={Topics in orbit equivalence},
      series={Lecture Notes in Mathematics},
   publisher={Springer-Verlag},
     address={Berlin},
        date={2004},
      volume={1852},
        ISBN={3-540-22603-6},
         url={http://dx.doi.org/10.1007/b99421},
      review={\MR{2095154 (2005f:37010)}},
}

\bib{KST}{article}{
      author={Kechris, A.S.},
      author={Solecki, S.},
      author={Todorcevic, S.},
       title={Borel chromatic numbers},
        date={1999},
        ISSN={0001-8708},
     journal={Adv. Math.},
      volume={141},
      number={1},
       pages={1\ndash 44},
         url={http://dx.doi.org/10.1006/aima.1998.1771},
      review={\MR{1667145 (2000e:03132)}},
}

\bib{LN}{article}{
      author={Lyons, R.},
      author={Nazarov, F.},
       title={Perfect matchings as {IID} factors on non-amenable groups},
        date={2011},
        ISSN={0195-6698},
     journal={European J. Combin.},
      volume={32},
      number={7},
       pages={1115\ndash 1125},
         url={http://dx.doi.org/10.1016/j.ejc.2011.03.008},
      review={\MR{2825538 (2012m:05423)}},
}

\bib{LPS}{article}{
      author={Lyons, R.},
      author={Peres, Y.},
      author={Schramm, O.},
       title={Minimal spanning forests},
        date={2006},
        ISSN={0091-1798},
     journal={Ann. Probab.},
      volume={34},
      number={5},
       pages={1665\ndash 1692},
         url={http://dx.doi.org/10.1214/009117906000000269},
      review={\MR{2271476 (2007m:60299)}},
}

\bib{Ma}{unpublished}{
      author={Marks, A.},
       title={A determinacy approach to {B}orel combinatorics},
        note={To appear in J. Amer. Math. Soc.},
}

\bib{M09}{article}{
    AUTHOR = {Miller, Benjamin D.},
     TITLE = {Ends of graphed equivalence relations. {I}},
   JOURNAL = {Israel J. Math.},
    VOLUME = {169},
      DATE = {2009},
     PAGES = {375--392},
}

\bib{M12}{article}{
      author={Miller, Benjamin~D},
       title={The graph-theoretic approach to descriptive set theory},
        date={2012},
     journal={The Bulletin of Symbolic Logic},
       pages={554\ndash 575},
}

\bib{Pemantle}{article}{
      author={Pemantle, R.},
       title={Towards a theory of negative dependence},
        date={2000},
     journal={J. Math. Phys.},
      volume={41},
      number={3},
       pages={1371\ndash 1390},
}

\bib{Z}{article}{
    AUTHOR = {Zakrzewski, P.},
     TITLE = {On invariant ccc {$\sigma$}-ideals on {$2^{\Bbb N}$}},
    DATE = {2014},
   JOURNAL = {Acta Math. Hungar.},
    VOLUME = {143},
    NUMBER = {2},
     PAGES = {367--377},
}

\end{biblist}
\end{bibdiv}

\end{document}